\numberwithin{equation}{section}
\newtheorem{theorem}{Theorem}[section]
\newtheorem{lemma}[theorem]{Lemma}
\theoremstyle{definition}
\newtheorem{example}[theorem]{Example}
\newtheorem{definition}[theorem]{Definition}
\newtheorem{problem}[theorem]{Problem}
\newtheorem{remark}[theorem]{Remark}
\newtheorem*{ack}{Acknowledgement}
\theoremstyle{remark}
\newenvironment{romenumerate}{\begin{enumerate}
 }{\end{enumerate}}
\newcounter{oldenumi}
\newenvironment{romenumerateq}
{\setcounter{oldenumi}{\value{enumi}}
\begin{romenumerate} \setcounter{enumi}{\value{oldenumi}}}
{\end{romenumerate}}
\newcounter{thmenumerate}
\newenvironment{thmenumerate}
{\setcounter{thmenumerate}{0}%
 \def\item{\par
 \refstepcounter{thmenumerate}\textup{(\roman{thmenumerate})\enspace}}
}
{}
\newcounter{xenumerate}   
\newcommand{\refT}[1]{Theorem~\ref{#1}}
\newcommand{\refL}[1]{Lemma~\ref{#1}}
\newcommand{\refR}[1]{Remark~\ref{#1}}
\newcommand{\refS}[1]{Section~\ref{#1}}
\newcommand{\refD}[1]{Definition~\ref{#1}}
\newcommand{\refP}[1]{Problem~\ref{#1}}
\newcommand{\refE}[1]{Example~\ref{#1}}
\newcommand{\refand}[2]{\ref{#1} and~\ref{#2}}
\newcommand\marginal[1]{\marginpar{\raggedright\parindent=0pt\tiny #1}}
\xdef\klockan{\the\count1.0\the\count255}
\xdef\klockan{\the\count1.\the\count255}\fi
\newcommand\nopf{\qed}   
\newcommand\set[1]{\ensuremath{\{#1\}}}
\newcommand\xpar[1]{(#1)}
\newcommand\bigpar[1]{\bigl(#1\bigr)}
\newcommand\biggpar[1]{\biggl(#1\biggr)}
\newcommand\bigabs[1]{\bigl|#1\bigr|}
\newcommand\Bigabs[1]{\Bigl|#1\Bigr|}
\def\rompar(#1){\textup(#1\textup)}    
\newcommand\xfrac[2]{#1/#2}
\def\xexp(#1){e^{#1}}
\newcommand\floor[1]{\lfloor#1\rfloor}
\newcommand\ntoo{\ensuremath{{n\to\infty}}}
\newcommand\norm[1]{\|#1\|}
\newcommand\iid{i.i.d.\spacefactor=1000}    
\newcommand\ie{i.e.\spacefactor=1000}
\newcommand\eg{e.g.\spacefactor=1000}
\newcommand\cf{cf.\spacefactor=1000}
\newcommand{\as}{a.s.\spacefactor=1000}
\newcommand{\aex}{a.e.\spacefactor=1000}
\newcommand{\tend}{\longrightarrow}
\newcommand\dto{\overset{\mathrm{d}}{\tend}}
\newcommand\pto{\overset{\mathrm{p}}{\tend}}
\newcommand\asto{\overset{\mathrm{a.s.}}{\tend}}
\newcommand\towx{\overset{\mathrm{w}\text{-}*}{\tend}}
\newcommand\eqd{\overset{\mathrm{d}}{=}}
\newcommand\bbR{\mathbb R}
\newcommand\bbN{\mathbb N}  
\newcounter{CC}
\newcounter{cc}
\newcommand\E{\operatorname{\mathbb E{}}}
\renewcommand\P{\operatorname{\mathbb P{}}}
\newcommand\ga{\alpha}
\newcommand\gb{\beta}
\newcommand\gD{\Delta}
\newcommand\gf{\varphi}
\newcommand\gam{\gamma}
\newcommand\gG{\Gamma}
\newcommand\gl{\lambda}
\newcommand\gs{\sigma}
\newcommand\eps{\varepsilon}
\newcommand\cB{\mathcal B}
\newcommand\cD{\mathcal D}
\newcommand\cE{\mathcal E}
\newcommand\cF{\mathcal F}
\newcommand\cK{\mathcal K}
\newcommand\cP{\mathcal P}
\newcommand\cS{{\mathcal S}}
\newcommand\cW{\mathcal W}
\newcommand\ett[1]{\boldsymbol1[#1]} 
\newcommand\etta{\boldsymbol1} 
\def\[#1]{[\![#1]\!]}
\newcommand\qw{^{-1}}
\newcommand\qww{^{-2}}
\renewcommand{\=}{:=}
\newcommand\oi{[0,1]}
\newcommand\oiq{(0,1)}
\newcommand\setoi{\set{0,1}}
\newcommand\dd{\,\textup{d}}
\newcommand\gnp{\ensuremath{G(n,p)}}
\newcommand{\tinj}{t_{\mathrm{inj}}}
\newcommand{\tind}{t_{\mathrm{ind}}}
\newcommand{\taup}{\tau^+}
\newcommand{\tauinj}{\tau_{\mathrm{inj}}}
\newcommand{\tauind}{\tau_{\mathrm{ind}}}
\newcommand{\ptau}{\hat\tau}
\newcommand{\ptaup}{\ptau^+}
\newcommand{\ptauinj}{\ptau_{\mathrm{inj}}}
\newcommand{\ptauind}{\ptau_{\mathrm{ind}}}
\newcommand{\rest}[1]{|_{[#1]}}
\newcommand{\restx}[1]{|_{#1}}
\newcommand{\DU}{\cD}
\newcommand{\DUx}[1]{\cD_{#1}}
\newcommand{\DUn}{\DUx{n}}
\newcommand{\DL}{\cD^L}
\newcommand{\DLx}[1]{\cD^L_{#1}}
\newcommand{\DLn}{\DLx{n}}
\newcommand{\cdq}{\overline{\cD}}
\newcommand{\cdd}{\cD^+}
\newcommand{\cdoo}{\DUx{\infty}}
\newcommand{\oii}{\oi^{2}}
\newcommand{\oid}{\oi^{\cD}}
\newcommand{\oidd}{\oi^{\cdd}}
\newcommand{\cw}{\cW}
\newcommand{\cwoi}{\cW(\oi)}
\newcommand{\cwsss}{\cW(\sss)}
\newcommand{\bcw}{\overline\cW}
\newcommand{\bcwcut}{(\bcw,\dcut)}
\newcommand{\bcwp}{\overline\cW_{\mathsf P}}
\newcommand{\gbwx}[1]{G(#1,\bW)}
\newcommand{\gbwoo}{\gbwx\infty}
\newcommand{\gbwn}{\gbwx{n}}
\newcommand{\ubw}{\Gamma_{\bW}}
\newcommand{\sn}{\mathfrak S_n}
\newcommand{\xio}{\xi_\emptyset}
\newcommand{\xii}{\xi_i}
\newcommand{\xij}{\xi_j}
\newcommand{\xiij}{\xi_{ij}}
\newcommand{\xiji}{\xi_{ji}}
\newcommand{\iij}{I_{ij}}
\newcommand{\uoi}{U(0,1)}
\newcommand{\wab}{W_{\ga\gb}}
\newcommand{\wo}{w}
\newcommand{\bW}{\mathbf{W}}
\newcommand{\oW}{\overline{W}}
\newcommand{\tW}{\widetilde{W}}
\newcommand{\WW}{\cW_5}
\newcommand{\exch}{exchangeable}
\newcommand\xn{\ensuremath{[n]}}
\newcommand{\xk}{[k]}
\newcommand{\vvk}{v_1,\dots,v_k}
\newcommand{\vvki}{v'_1,\dots,v'_k}
\newcommand{\PU}{\cP}
\newcommand{\PUx}[1]{\cP_{#1}}
\newcommand{\PUn}{\PUx{n}}
\newcommand{\PL}{\cP^L}
\newcommand{\PLx}[1]{\cP^L_{#1}}
\newcommand{\PLn}{\PLx{n}}
\newcommand{\cpq}{\overline{\cP}}
\newcommand{\cpp}{\cP^+}
\newcommand{\cpoo}{\PUx{\infty}}
\newcommand{\cploo}{\PLx{\bbN}}
\newcommand{\oip}{\oi^{\cP}}
\newcommand{\oipp}{\oi^{\cpp}}
\newcommand{\Borgsetal}{Borgs, Chayes, Lov\'asz, S\'os and Vesztergombi}
\newcommand\sC{\mathsf{C}}
\newcommand\sD{\mathsf{D}}
\newcommand\sP{\mathsf{P}}
\newcommand\bbNx{\bbN\cup\set\infty}
\newcommand\ops{ordered probability space}
\newcommand\sfmuu{\ensuremath{(\cS,\cF,\allowbreak\mu,<)}}
\newcommand\sfmuux{\ensuremath{(\cS,\cF,\allowbreak\mu,\prec)}}
\newcommand\sfmu{\ensuremath{(\cS,\cF,\mu)}}
\newcommand\oibglx{\ensuremath{(\oi,\cB,\allowbreak\gl,\prec)}}
\newcommand\oibgl{\ensuremath{(\oi,\cB,\allowbreak\gl,<)}}
\newcommand\oib{\ensuremath{(\oi,\cB,\gl)}}
\newcommand\bbnn{\bbN\cup\set\infty}
\newcommand\csq{\cS^{|Q|}}
\newcommand\csf{\cS^{|F|}}
\newcommand\xQ{{|Q|}}
\newcommand\xF{{|F|}}
\newcommand\piw{\Pi_W}
\newcommand\pip{\Pi_P}
\newcommand\piwp{\Pi_{W_P}}
\newcommand\piwi{\Pi_{W_1}}
\newcommand\piwii{\Pi_{W_2}}
\newcommand\pit{\Pi_T}
\newcommand\pie{\Pi_0}
\newcommand\nn{[n]}
\newcommand\oo{\bbN}
\newcommand\pnw{\ensuremath{P(n,W)}}
\newcommand\poow{\ensuremath{P(\infty,W)}}
\newcommand\pnp{P(n,\Pi)}
\newcommand\pnpi{\ensuremath{P(n,\Pi)}}
\newcommand\poopi{\ensuremath{P(\infty,\Pi)}}
\newcommand\poopiw{\ensuremath{P(\infty,\piw)}}
\newcommand\poox[1]{\ensuremath{P(\infty,#1)}}
\newcommand\sss{\cS}
\newcommand\lm{Lebesgue measure}
\newcommand\oilm{$\oi$ with Lebesgue measure}
\newcommand\lp{Lebesgue point}
\newcommand\hP{\widehat P}
\newcommand\hpn{\widehat {P_n}}
\newcommand\mup{\mu_p}
\newcommand\pipp{\Pi_{(p)}}
\newcommand\sssx{\sss^*}
\newcommand\np[1]{|#1|_+}
\newcommand\erip{exchangeable random infinite poset}
\newcommand\oiqq{\oiq^2}
\newcommand\xex{X_x^\eps}
\newcommand\xey{X_y^\eps}
\newcommand\xez{X_z^\eps}
\newcommand\gr{^\circ}
\newcommand\txi{\tilde \xi}
\newcommand\teta{\tilde \eta}
\newcommand\precy{\prec^*}
\newcommand\precpnw{\prec_{\pnw}}
\newcommand\ps{probability space}
\newcommand\sssq{\sss^2}
\newcommand\mpx{measure preserving}
\newcommand\cn[1]{\norm{#1}\cut}
\newcommand\cntwo[1]{\norm{#1}_{\square,2}}
\newcommand\cnone[1]{\norm{#1}_{\square,1}}
\newcommand\cut{_{\square}}
\newcommand\dcut{{\delta_{\square}}}
\newcommand\on[1]{\norm{#1}_{L^1}}
\renewcommand\sn[1]{\norm{#1}_{\infty}}
\newcommand{\Lovasz}{Lov\'asz}
\newcommand\REM[1]{{\raggedright\texttt{[#1]}\par\marginal{XXX}}}
\newcommand\citex[1]{\texttt{[#1]}}
\newcommand\refx[1]{\texttt{#1}}
\newcommand\urladdrx[1]{{\urladdr{\def~{{\tiny$\sim$}}#1}}}
\begin{document}
\title
{Poset limits and exchangeable random posets}

\date{January 25, 2009}  

\address{Department of Mathematics, Uppsala University, PO Box 480,
SE-751~06 Uppsala, Sweden}
\email{svante.janson@math.uu.se}
\urladdrx{http://www.math.uu.se/~svante/}

\subjclass[2000]{06A06;05C99,60C05} 

\begin{abstract} 
We develop a theory of limits of finite posets in close analogy to the
recent theory of graph limits. In particular, we study representations
of the limits by functions of two variables on a probability space,
and connections to exchangeable random infinite posets.
\end{abstract}

\maketitle

\section{Introduction and main results}\label{Sintro}

A deep 
theory of limit objects of (finite) graphs has in recent years been created
by
\citet{LSz} and
\citet{BCLSV1,BCLSV2}, and further developed in a series of papers by
these and other 
authors. 
It is shown by \citet{SJ209}
that the theory is closely connected with
the Aldous--Hoover theory of representations of exchangeable arrays of
random variables, further developed and described in detail by
\citet{Kallenberg:exch}; the connection is through 
exchangeable random infinite graphs.
(See also \citet{Tao} and \citet{Austin}.)

The basic ideas of the graph limit theory extend to other structures
too; note that the Aldous--Hoover theory 
as stated by \citet{Kallenberg:exch} 
includes both multi-dimensional arrays (corresponding to hypergraphs) 
and some different symmetry conditions (or lack thereof).
For bipartite graphs and digraphs (\ie, directed graphs), some details
are given by 
\citet{SJ209}. 
For hypergraphs, an extension is given by
\citet{ElekSz}; see also
\cite{SJ209} (where no details are given)
and \citet{Tao} and \citet{Austin}.

It seems possible that some future version of the theory will be
formulated in a general way that includes all these cases as well as
others. While waiting for such a theory, it is interesting to study
further structures.
In the present paper, we develop a theory for 
limits of finite \emph{posets} (\ie, partially ordered sets). 

The theory for posets can be developed in analogy with the theory for
graph limits, but it can also be obtained as a special case of the
theory for digraphs. We will in this paper use both views.

In this paper, all posets (and graphs) are assumed to be non-empty. They
are usually finite, but we will sometimes use infinite posets as well.
If $(P,<)$ is a poset, we call $P$ its \emph{ground set}; we also say
that $(P,<)$ is a poset {on} $P$.
For simplicity, we often use the same notation for a poset and
its ground set when there is no danger of confusion. 
Sometimes we write $<_P$ for the partial order and $P\gr$ for
the ground set of a poset $P$.
We let $\cP$ denote the set of unlabelled finite posets.
(For this and other definitions, see also Sections
\ref{Sprel}--\ref{Slim} where more details are given.)

We may regard a poset $(P,<)$ as a digraph, with vertex set $P$ and a
directed edge $i\to j$ if and only if $i<j$ for all $i,j\in P$. (In
particular, the digraph is loopless.) 
The poset and the digraph determine each other uniquely, so
we may identify a poset with the corresponding digraph, but
note that not every digraph is a poset.
Hence, we can regard $\cP$ as a subset of the set
$\cD$ of unlabelled finite digraphs.
A simple characterizations of the digraphs that are posets is given in
\refL{L1}.

A \emph{poset homomorphism} $Q\to P$ is a map
$\gf:Q\gr\to P\gr$ between the ground sets such that
$x<_Qy \implies \gf(x)<_P\gf(y)$.
We say that $Q$ is a \emph{subposet} of $P$, and write
$Q\subseteq P$, if $Q\gr\subseteq P\gr$ and 
$x<_Qy \implies x<_Py$, \ie, if the identity map $Q\to P$
is a poset homomorphism.
We say that $Q$ is an \emph{induced subposet} of
$P$ if further 
$x<_Qy \iff x<_Py$ for all $x,y\in Q\gr$.
If $P$ is a poset and $A$ is a subset of its ground set $P\gr$, then $P|_A$
denotes the restriction of $P$ to $A$, \ie, $A$ with the order
$<_P$ inherited from $P$. Thus, $Q$ is an induced subposet of
$P$ if and only if $Q$ equals $P|_A$ for some
(non-empty) $A\subseteq P\gr$.
Note that these definitions agree with the corresponding
definitions for digraphs, so we may identify posets with digraphs as
above without problems.

In analogy with the graph case in \cite{LSz,BCLSV1}, we define the
functional $t(Q,P)$ for finite posets as the proportion of all maps
$Q\to P$ that are poset homomorphisms.
We similarly also define $\tinj(Q,P)$ as
the proportion of all injective maps $Q\to P$ that are poset
homomorphisms 
and $\tind(Q,F)$ as
the proportion of all injective maps
$\gf:Q\to P$ such that $x<_Qy \iff \gf(x)<_P\gf(y)$
(\ie, $\gf$ is an isomorphism .onto an induced subposet of $P$).

We say that a sequence $(P_n)$ of
finite posets with $|P_n|\to\infty$ \emph{converges}, if $t(Q,P_n)$
converges for every finite poset $Q$.
(All unspecified limits in this paper are as  \ntoo.)
For completeness, we also say that a sequence $(P_n)$ of
finite posets with $|P_n|\not\to\infty$ {converges} if it is
eventually constant.

If a sequence of posets converge in this sense, what is its limit?
Exactly as for graph limits
\cite{LSz,BCLSV1,SJ209}, we may define limit objects in several
different, equivalent, ways. One possibility is to define the limit
objects as equivalence classes of convergent sequences, where two
convergent sequences $(P_n)$ and $(P_n')$ are defined to be equivalent
if the combined sequence $(P_1,P'_1,P_2,P'_2,\dots)$ converges. This
is similar to the standard construction of the completion of a metric
space using Cauchy sequences. In fact, it is easy to define a
metric on $\cP$ such that the Cauchy sequences are exactly the
convergent sequences, and then the poset limits are exactly the
elements of the completion. A simple way to construct such a metric is
to use one of the embedding in \refT{Ttau} of $\cP$ into a compact
metric space. Equivalently, and this is the method that we find
technically most convenient, we choose one of these embeddings, for
example $\ptaup:\cP\to\oipp=\oip\times\oi$ defined in \refS{Slim},
identify $\cP$ and its image $\ptaup(\cP)$, and let $\cpq$ be its
closure in $\oipp$; thus $\cpq$ is the set of \emph{poset limits}.
We also define
$\cpoo\=\cpq\setminus\cP$, the set of \emph{proper poset limits}.
Note that $\cpq$ is a compact metric space, because $\oipp$ is.
Further, $\cP$ is an open dense subset of $\cpq$, and thus $\cpoo$ is
a closed subset and thus itself a compact metric space.

It follows from this construction that
the functionals $t(Q,\cdot)$ , $\tinj(Q,\cdot)$ and $\tind(Q,\cdot)$ 
extends by continuity to $\cpq$ for every
$Q\in\cP$, and that $P_n\to\Pi\in\cpoo$ if and only if $|P_n|\to\infty$ and
$t(Q,P_n)\to t(Q,\Pi)$ for every $Q\in\cP$. As a consequence, a proper
poset limit $\Pi\in\cpoo$ is determined by $t(Q,\Pi)$, $Q\in\cP$.

Just as for graph limits, this construction is convenient for the
definition and existence of poset limits, but a more concrete
representation is desirable. We will study two such representations,
by \emph{kernels} and by \emph{exchangeable random posets}.

For graph limits, 
\citet{LSz} gave an important
(non-unique) representation by symmetric functions $W:\oi^2\to\oi$
(or, more generally, $W:\cS^2\to\oi$ for a probability space $\cS$),
see also \cite{BCLSV1,SJ209}.
(See \cite{SJ209} and \refS{SD} below for the more complicated version
for digraphs.) 
A similar construction for poset limits is as follows.

\begin{definition}\label{D1}
  An \emph{\ops} $\sfmuux$ is a probability space $\sfmu$ equipped with a
  partial order $\prec$ such that $\set{(x,y):x\prec y}$ is a measurable subset
  of $\cS\times\cS$ (\ie, belongs to the product $\gs$-field
  $\cF\times\cF$).

A \emph{kernel} on an \ops{} $\sfmuux$ is a measurable function
$W:\cS\times\cS\to\oi$ such that, for $x,y,z\in\cS$,
\begin{align}
  \label{w1}
W(x,y)>0&\implies x\prec y,\\
\label{w2}
W(x,y)>0 \text { and } W(y,z)>0 &\implies W(x,z)=1.
\end{align}

A \emph{strict kernel} is a kernel such that 
$W(x,y)>0\iff x\prec y$.
\end{definition}

When convenient, we may omit parts of the notation that are clear from
the context and say, \eg, that $\sss$ or
$(\sss,\mu)$ is a probability space or an \ops.

\begin{remark}\label{Rstrict}
We may when convenient suppose that the kernel is strict, by replacing
the order $\prec$ on $\cS$ by $\prec'$ defined by $x\prec'y$ if $W(x,y)>0$.
Note further that by \eqref{w2}, a strict kernel $W(x,y)$ is typically
determined to be 0 or 1 for many $(x,y)$; it is only when $(x,y)$
forms a gap in the order $\prec'$ that we have the freedom to choose
$W(x,y)\in(0,1)$. 
\end{remark}

Let $[n]\=\set{1,\dots,n}$ for  $n\in\bbN\=\set{1,2,\dots}$,
and $[\infty]\=\bbN$. Thus $[n]$ is a set of cardinality
$n$ for all $n\in\bbN\cup\set\infty$.

\begin{definition}\label{Dpnw}
Given a kernel $W$ on an \ops{} \sfmuux, we define for every
$n\in\bbnn$ a random poset $\pnw$ of cardinality $n$ by taking a
sequence $(X_i)_{i=1}^\infty$ of \iid{} points in $\cS$ with
distribution $\mu$, and independent uniformly distributed random
variables $\xiij\sim U(0,1)$, $i,j\in\bbN$, and then defining $\pnw$
to be $\xn$ with the partial order $\precy=\precpnw$ defined by: 
$i\precy j$ if and only if $\xiij<W(X_i,X_j)$.  
In other words, given $(X_i)$, we define the partial order randomly
such that $i\precy j$ with probability $W(X_i,X_j)$, with 
(conditionally) independent choices for different pairs $(i,j)$.
\end{definition}

Note that $\precy$ really is a partial order because of \eqref{w1},
which implies irreflexivity and asymmetry, and \eqref{w2}, which
implies transitivity. (This is the reason why we have to insist that
$W(x,z)=1$ in \eqref{w2}.) 

\begin{remark}
  \label{Rpnw}
We insist in \refD{D1} that
\eqref{w1}--\eqref{w2} hold for \emph{all}
$x,y,z$, and not just a.e.; this will require some technical
arguments in proofs in \refS{Skernels} to replace a
candidate kernel by a kernel that is \aex{} equal to it. Note
that we can define $\pnw$ as above also if $W$ only satisfies
\eqref{w1}--\eqref{w2} \aex; $\pnw$ then
will be a poset a.s. (We will use this in the proof of
\refT{T1+} below.)
\end{remark}

\begin{example}\label{EW1}
  For any \ops, $W(x,y)=\ett{x\prec y}$ is a strict kernel. 
(We use $\ett{\cE}$ to denote the indicator function of the event
  $\cE$, which is 1 if $\cE$ occurs and 0 otherwise.)
In this case 
$i\precpnw j\iff  X_i\prec X_j$ 
and we  do not need the auxiliary random variables $\xiij$.
In other words, $\pnw$ then is (apart from the labelling) just the
subset \set{X_1,\dots,X_n} of $\cS$ with the induced order, provided
$X_1,\dots,X_n$ are distinct (or, in general, if we regard
\set{X_1,\dots,X_n} as a multiset). 

Note that every strict kernel with values in \setoi{} is of this
type. (In particular, every strict kernel on an \ops{} with a
continuous order.)
\end{example}

\begin{example}\label{Ep}
  Let $\sss=\setoi$ with $\mu\set0=\mu\set1=1/2$ and $0\prec 1$; let
  further $W(0,1)=p$ for some given $p\in\oi$, and, as required by
  \eqref{w1}, $W(0,0)=W(1,0)=W(1,1)=0$. Then $\pnw$ consists of a
  random 'lower' set of roughly half the vertices and a complementary
  'upper' set, and $u\precpnw v$ with probability $p$ for all lower $u$
  and upper $v$ (and never otherwise), independently for all pairs $(u,v)$.
\end{example}

Further examples are given below and in \refS{Sex}.

One of our main results is the following representation theorem,
parallel to the result for graph limits by \citet{LSz}. The proofs of
this and other theorems in the introduction are given in later sections.

\begin{theorem}
  \label{T1}
Every kernel $W$ on an \ops{} \sfmuux{} 
defines a poset limit \/ $\Pi_W\in\cpoo$ such that the following holds.
\begin{romenumerate}
  \item\label{T1a}
$\pnw\asto\Pi_W$ as \ntoo.
  \item\label{T1b}
$\displaystyle t(Q,\piw)=
\int_{\csq} \prod_{ij:i<_Q j} W(x_i,x_j) \dd \mu(x_1)\dots\dd\mu(x_\xQ),
\quad Q\in\cP 
$. \hfill 
$(\stepcounter{equation}\theequation
\makeatletter\xdef\@currentlabel{\theequation}\makeatother\label{t1b})
$
\end{romenumerate}  
Moreover, every poset limit $\Pi\in\cpoo$ can be represented in this
way, \ie, $\Pi=\piw$ for some kernel $W$ on an \ops{} \sfmuux.	
\end{theorem}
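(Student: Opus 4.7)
My plan is to prove the forward direction by direct computation of $\E t(Q,\pnw)$ followed by concentration. Expanding $t(Q,\pnw)$ as an average of indicators over maps $\phi:[Q]\to[n]$, irreflexivity (forced by \eqref{w1}) kills the contribution of any $\phi$ that identifies a $<_Q$-related pair, while for injective $\phi$ the conditional independence of the $\xiij$ given $(X_i)$ yields
\[
\E\Bigl[\prod_{i<_Q j}\ett{\phi(i)\precpnw \phi(j)}\Bigr] = \int_{\csq}\prod_{i<_Q j} W(x_i,x_j)\dd\mu^{\otimes \xQ}.
\]
Since injective maps make up a $1+O(1/n)$ fraction of all maps, this gives $\E t(Q,\pnw) = \int_{\csq}\prod W\dd\mu^{\otimes \xQ} + O(1/n)$. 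Replacing a single $X_i$ and the associated $\xiij$'s alters $t(Q,\pnw)$ by at most $\xQ/n$, so McDiarmid's inequality yields $\P(|t(Q,\pnw) - \E t(Q,\pnw)| > \ep) \le 2\exp(-c\ep^2 n)$. Borel--Cantelli combined with countability of $\cP$ gives almost sure convergence of $t(Q,\pnw)$ to the asserted integral simultaneously for all $Q\in\cP$; since $|\pnw| \to \infty$ and a poset limit is determined by its values $t(Q,\cdot)$, we obtain $\pnw \asto \piw$ for a deterministic $\piw \in \cpoo$, proving (i), (ii), and the first assertion.

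\textbf{From poset limit to kernel.} For the converse, given $\Pi \in \cpoo$, I would fix a defining sequence $P_n \to \Pi$ and form a random infinite poset $R$ on $\bbN$ by sampling uniform injections $[k]\hookrightarrow P_n$ coherently in $k$ and passing to a subsequential distributional limit; the restriction of $R$ to $[k]$ then has the correct $t(Q,\Pi)$-statistics. The $\set{0,1}$-valued array $(\ett{i\prec_R j})_{i,j\in\bbN}$ is jointly exchangeable, so the Aldous--Hoover representation (see \citet{Kallenberg:exch}) provides a measurable function $f$ and \iid{} $\uoi$ variables $\xio,\xii,\xiij$ with $\ett{i\prec_R j} = f(\xio,\xii,\xij,\xiij)$. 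The ergodic/extremal nature of sampling from a single deterministic limit $\Pi$ makes the global component $\xio$ trivial, so $f$ may be taken of the form $f(\xii,\xij,\xiij)$; setting $W(x,y)\=\int_0^1 f(x,y,u)\dd u$ then produces the candidate kernel on $(\oi,\gl)$. By part (ii), the values $t(Q,\piw)$ coincide with the $Q$-subposet probabilities of $R$, which match $t(Q,\Pi)$ by construction, forcing $\piw = \Pi$.

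\textbf{The main obstacle} is promoting the candidate $W$ to one that satisfies \eqref{w1}--\eqref{w2} \emph{pointwise}, as \refD{D1} requires, rather than only almost everywhere. Almost-sure antisymmetry and transitivity of $R$ immediately give \eqref{w1} $\mu\otimes\mu$-a.e.\ and \eqref{w2} $\mu^{\otimes 3}$-a.e., and at this stage \refR{Rpnw} already lets us define $\pnw$ and verify $\piw = \Pi$. Upgrading to a strict kernel in the sense of \refD{D1} requires a null-set modification of $W$ (and a compatible refinement of the order $\prec$ on $\sss$) that does not change any integral in \eqref{t1b}. This technical massaging --- choosing a canonical representative via Lebesgue points and adjusting on a null set so the axioms hold everywhere --- is the substantive work; it parallels the corresponding manoeuvres in the graph-limit setting and is most naturally deferred to \refS{Skernels}.
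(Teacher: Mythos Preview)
Your proposal is correct, and the converse direction is essentially the paper's own argument: build the exchangeable infinite poset $R$ associated to $\Pi$, apply Aldous--Hoover, eliminate the global coordinate by extremality, set $W(x,y)=\int_0^1 f(x,y,u)\dd u$, and then carry out the null-set repair to enforce \eqref{w1}--\eqref{w2} pointwise. The paper does exactly this, obtaining $R$ via \refT{TE} rather than by resampling from a defining sequence, and executing the ``technical massaging'' you defer by passing to Lebesgue points of a regularised version of $W$ (your instinct that this is the substantive step is correct).

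The forward direction, however, genuinely differs. The paper does \emph{not} compute $\E t(Q,\pnw)$ and concentrate: instead it observes that $R=\poow$ is an exchangeable infinite poset satisfying the factorisation condition \refT{TE2}\ref{te2b'}, hence has extreme distribution by \refT{TE2}, and then invokes \refT{TE}\ref{tex} to produce $\piw$ together with the almost sure convergence $\pnw=R\rest n\asto\piw$. Your route via McDiarmid and Borel--Cantelli is more elementary and self-contained --- it needs nothing from the exchangeability theory of \refS{Sexch} --- whereas the paper's route reuses machinery that is needed anyway for the converse and for \refT{TE} itself, so it is shorter in context. Both are valid; yours would be preferable in a treatment that wished to postpone or avoid the Aldous--Hoover framework for as long as possible.
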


Unfortunately, the \ops{} and the kernel $W$ in \refT{T1} are not
unique (just as in the corresponding representation of graph
limits). We discuss the question of when two kernels represent the same
poset limit in \refS{Suniqueness}. We note, however, the following
important fact. 

\begin{theorem}\label{Tpoopi}
Let $\Pi\in\cpoo$ and $n\in\bbnn$. Then the random poset $\pnw$ has
the same distribution for every kernel $W$ on an \ops{} that
represents $\Pi$.
We may consequently define the random poset $\pnpi$ as $\pnw$ for any
kernel $W$ such that $\Pi_W=\Pi$.
\end{theorem}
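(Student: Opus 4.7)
The plan is to express the distribution of $\pnw$ entirely in terms of the functionals $t(Q,\Pi)$, which by \refT{T1}\ref{T1b} depend only on $\Pi$.

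\emph{Finite $n$.} Fix $n\in\bbN$ and let $\sigma$ be a partial order on $\xn$. Writing $A_{ij}=\{i\precpnw j\}$ and applying inclusion--exclusion over the ordered pairs $(i,j)$ with $i\ne j$ and $(i,j)\notin\sigma$ (the ones that must \emph{fail} in order to have $\pnw=\sigma$ exactly), one obtains
\begin{equation*}
\P(\pnw=\sigma)=\sum_{F}(-1)^{|F|}\,\P\Bigpar{\bigcap_{(i,j)\in\sigma\cup F}A_{ij}},
\end{equation*}
where the sum runs over all subsets $F$ of those pairs. By \refD{Dpnw} and Fubini, each inner probability equals
$\int_{\cS^n}\prod_{(i,j)\in\sigma\cup F}W(x_i,x_j)\dd\mu(x_1)\dots\dd\mu(x_n)$.

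\emph{Reducing each integral to a $t$-value.} Set $D:=\sigma\cup F$, viewed as a digraph on $\xn$. If $D$ contains a directed cycle $k_0\to k_1\to\dots\to k_m=k_0$, then at any point where the integrand is positive \eqref{w1} would give the chain $x_{k_0}\prec x_{k_1}\prec\dots\prec x_{k_0}$, contradicting irreflexivity; hence the integral is $0$. Otherwise $D$ is acyclic, and has a transitive closure $D^+$ that is a partial order on $\xn$. Iterating \eqref{w2} along each path in $D$ shows that whenever the $D$-integrand is positive at some $(x_1,\dots,x_n)$, $W(x_i,x_j)=1$ for every $(i,j)\in D^+\setminus D$; the $D^+$- and $D$-integrands therefore coincide pointwise. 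Consequently the integral equals $t(D^+,W)$ by \eqref{t1b}, which by \refT{T1}\ref{T1b} equals $t(D^+,\piw)=t(D^+,\Pi)$. Thus $\P(\pnw=\sigma)$ is an explicit signed sum of values $t(Q,\Pi)$ and so is determined by $\Pi$; this determines the full law of $\pnw$.

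\emph{The case $n=\infty$.} In \refD{Dpnw} the random poset $\poox{W}$ is built from a single i.i.d.\ sequence $(X_i)_{i\in\bbN}$ together with the variables $(\xiij)$; by construction $\poox{W}\rest{n}=\pnw$ almost surely for every finite $n$. Hence the finite-dimensional marginals of $\poox{W}$ have distributions determined by $\Pi$, and since the Borel $\gs$-algebra on the space of partial orders on $\bbN$ is generated by cylinders coming from the finite restrictions, the full distribution of $\poox{W}$ is determined by $\Pi$ as well.

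The main obstacle I anticipate is the reduction step: the digraphs $D$ produced by the inclusion--exclusion need not be partial orders, and one must use \eqref{w1} and, crucially, the strict requirement $W(x,z)=1$ in \eqref{w2} (rather than only $W(x,z)>0$) to conclude that every resulting integral collapses to either $0$ or $t(Q,\Pi)$ for some $Q\in\cP$.
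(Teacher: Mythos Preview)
Your proof is correct. Both your argument and the paper's ultimately express $\P(\pnw=\sigma)$ as a finite linear combination of values $t(Q,\Pi)$, but the routes differ. The paper observes directly that for posets $Q$ with $|Q|\le n$ one has $\E\tinj(Q,\pnw)$ equal to the integral in \eqref{t1b}, hence $\E\tinj(Q,\pnw)=t(Q,\Pi)=\tinj(Q,\Pi)$ by \eqref{ttinj}; it then invokes the standard M\"obius relation \eqref{tinjind} between $\tinj$ and $\tind$ --- an inclusion--exclusion that never leaves the class of posets --- to conclude $\P(\pnw=Q)=\tind(Q,\Pi)$. You instead run inclusion--exclusion over \emph{all} ordered pairs, which produces integrals indexed by arbitrary digraphs $D=\sigma\cup F$; to bring these back to poset integrals you must appeal to \eqref{w1} (cycles force the integrand to vanish) and the full strength of \eqref{w2} (an acyclic $D$ has the same integral as its transitive closure $D^+$, because the extra factors are forced to equal $1$). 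Your approach is more self-contained and makes explicit why the kernel axioms enter, while the paper's is shorter because it leans on the ready-made relation \eqref{tinjind} and never needs \eqref{w1}--\eqref{w2} at this stage. The $n=\infty$ case is handled identically in both arguments.
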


It is easy to see that if $Q$ is a finite poset, $n\ge|Q|$, 
and $W$ is a kernel, then
\begin{equation}
  \label{tnw}
\E \tinj(Q,\pnw)
=\int_{\csq} \prod_{ij:i<_Q j} W(x_i,x_j) \dd \mu(x_1)\dots\dd\mu(x_\xQ),
\end{equation}
the integral in \eqref{t1b}. Hence,
for every poset limit $\Pi$, finite poset $Q$ and finite $n\ge|Q|$,
\begin{equation}
  \label{tqpin}
\E \tinj(Q,\pnp) = t(Q,\Pi).
\end{equation}
If $Q$ is a finite labelled poset with ground set $\subset\bbN$ we
similarly find
\begin{equation}
  \label{tqpi}
\P\bigpar{Q\subset\poopi} = t(Q,\Pi).
\end{equation}
This is easily seen to be equivalent to
(see \eqref{ttinj}--\eqref{tinjind} and \eqref{ml1}--\eqref{ml2})
\begin{equation}
  \label{tqpind}
\P\bigpar{\poopi\rest n=Q} 
=\P\bigpar{\pnp=Q} 
= \tind(Q,\Pi),
\end{equation}
for every (labelled) poset $Q$ on $\nn$,
which describes the distribution of $\poopi$.

We can use the non-uniqueness of the representation to our advantage
by imposing further conditions (normalizations) that may be useful in
various situations.

\begin{theorem}
  \label{T1+}
We may in \refT{T1} choose one of the following further conditions
and impose it on the representing kernel $W$:
\begin{romenumerate}
  \item\label{t1+s}
$W$ is a strict kernel.
\item\label{t1+oi}
$\sfmu=\oi$ with Lebesgue measure;
\ie, $W$ is a kernel on $\oibglx$, where
  $\cB$ is the Borel $\gs$-field, $\gl$ is
  Lebesgue measure and $\prec$ is some (measurable) partial
  order, not necessarily the standard order.
\item\label{t1+oi2}
$\sfmu=\oi^2$ with Lebesgue measure, and $(x_1,y_1)\prec(x_2,y_2)$ if and
only if $x_1<x_2$ in the standard order.
\end{romenumerate}
\end{theorem}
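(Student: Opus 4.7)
My plan is to treat the three normalizations (i), (ii), (iii) in succession, applying them in order to an arbitrary representing kernel produced by \refT{T1}.

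\textbf{For (i):} I follow the recipe hinted at in \refR{Rstrict}. Given a representing kernel $W$ on an ops $\sfmuux$, I introduce a new relation $\prec'$ on $\cS$ by setting $x\prec' y$ iff $W(x,y)>0$. The set $\set{W>0}$ is measurable since $W$ is; irreflexivity and antisymmetry for $\prec'$ follow from \eqref{w1} combined with the fact that $\prec$ is itself a partial order, and transitivity is exactly \eqref{w2}. With this choice $W$ is by definition a strict kernel on $(\cS,\cF,\mu,\prec')$, and since the construction of $\pnw$ in \refD{Dpnw} uses only $W$ and $\mu$, never $\prec$ itself, we have $\pi_W$ unchanged, so the poset limit is preserved.

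\textbf{For (ii):} I need to transport a kernel on a general ops to one on $\oibglx$. By the proof of \refT{T1}, the representing ops may be taken countably generated (it depends only on the separable data $t(Q,\Pi)$ together with $W$), hence essentially standard Borel. Then by the classical measure isomorphism theorem, $\sfmu$ is isomorphic mod zero to a direct sum of an interval with Lebesgue measure and countably many atoms; I handle the atoms by multiplying each with an auxiliary copy of $\oi$ and extending $W$ and $\prec$ to be trivial (zero / incomparable) on these copies, as allowed by \refR{Rpnw} (modifications on a null set do not affect $\pnw$ in law). Pushing $W$ and $\prec$ through the resulting measurable isomorphism onto $\oi$ yields a kernel on $\oibglx$ with the same poset limit.

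\textbf{For (iii):} Starting from (ii), I have $W$ on $\oibglx$ with some measurable partial order $\prec$, and I want to construct a measure-preserving map $\psi:\oi^2\to\oi$ such that $\psi(x_1,y_1)\prec\psi(x_2,y_2)\Rightarrow x_1<x_2$. Setting $W'((x_1,y_1),(x_2,y_2))\=W(\psi(x_1,y_1),\psi(x_2,y_2))$ then gives a kernel on $\oi^2$ with the prescribed coordinatewise order, representing the same limit by \refT{Tpoopi}. To construct $\psi$: fix a measurable linear extension $\preceq_L$ of $\prec$; use it to define a ``rank'' $h:\oi\to\oi$ with $h(x)=\gl(\set{z:z\preceq_L x})$; then disintegrate $\gl=\int\mu_t\dd t$ along $h$, so that the fibres $h\qw(t)$ are $\prec$-antichains and each $\mu_t$ is isomorphic to Lebesgue on $\oi$. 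Gluing the fibrewise isomorphisms produces the desired $\psi$, with $\psi\qw$ mapping the first coordinate to $h$.

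The main obstacle is the existence of a measurable linear extension of a measurable partial order on a standard Borel probability space, together with the technical handling of degeneracies (non-strict monotonicity of $h$ caused by $\preceq_L$-gaps of $\gl$-measure zero, or fat antichains). I would address the first point by a standard compactness/approximation argument --- writing $\prec$ as the union of an increasing sequence of finitely generated subrelations obtained from a countable generating subalgebra of $\cF$, each of which extends to a linear order in the usual combinatorial way --- and the second by enriching the ops with an independent $\uoi$ coordinate to break ties, which does not affect $\pnw$ in law. Once $\psi$ is produced, verification of \eqref{w1} for $W'$ with the first-coordinate order is immediate from the defining property of $\psi$, while \eqref{w2} is inherited from $W$.
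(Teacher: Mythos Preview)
Your treatment of (i) is correct and matches the paper. For (ii), the paper observes that the proof of \refT{T1} already constructs the representing kernel on $\oibglx$, so nothing further is needed; your transport argument is more roundabout but would also work once the measure-theoretic details are filled in.

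For (iii), however, there is a genuine gap. You rely on the existence of a \emph{measurable linear extension} $\preceq_L$ of the measurable partial order $\prec$ on $\oi$, and you acknowledge that this is ``the main obstacle''. Your sketch for producing it --- approximating $\prec$ by finitely generated subrelations and extending each combinatorially --- does not obviously yield a limit linear order, let alone a measurable one: the successive finite extensions need not be compatible, and there is no evident compactness to invoke. Even granting $\preceq_L$, your rank function $h(x)=\gl\set{z:z\preceq_L x}$ need not be strictly increasing along $\prec$ (the $\preceq_L$-interval between two $\prec$-comparable points may be null), so the fibres $h\qw(t)$ need not be $\prec$-antichains; you wave at this with tie-breaking, but the two difficulties compound.

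The paper avoids the linear-extension question entirely. It works directly with the intrinsic rank
\[
g(x)\=\gl\set{z:z\prec x},
\]
and proves a short lemma (\refL{Lg}) showing that $\set{(x,y):x\prec y,\ g(x)\ge g(y)}$ is a null set in $\oi^2$. This gives, for \aex{} $(x,y)$, the implication $x\prec y\implies g(x)<g(y)$ without ever linearizing $\prec$. One then multiplies $W$ by the indicator $\ett{g(x)<g(y)}$ (changing it only on a null set), takes $h$ to be the right-continuous inverse of the distribution function of $g$, and invokes the transfer theorem to produce $f:\oi^2\to\oi$ with $(h(U_1),f(h(U_1),U_2))\eqd(g(U_1),U_1)$. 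The pulled-back kernel on $\oi^2$ then satisfies $W>0\implies x_1<y_1$ \aex, and a final indicator truncation makes it an honest kernel for the first-coordinate order. The point is that the ``almost-strict monotonicity'' lemma for $g$ replaces both your linear extension and your tie-breaking in one stroke.
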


When $\mu$ is the Lebesgue measure $\gl$ (in one or
several dimensions), we take $\cF$ 
to be the Borel $\gs$-field.
(We could use the Lebesgue $\gs$-field instead; this would not make
any essential difference since a Lebesgue measurable function into
$\oi$ is \aex{} equal to a Borel measurable function.)

We have, however, not yet been able to see whether it always is
possible to use 
$\sfmu=\oi$ with Lebesgue measure and the standard order $<$.
(This would supersede both \ref{t1+oi} and
\ref{t1+oi2} in \refT{T1+}, and yield a simplified representation of
poset limits.) We state this as an open problem:

\begin{problem}\label{P1}
Can every proper poset limit be represented by a kernel on
$\oibgl$, with the standard order $<$?
\end{problem}

\begin{example}[Continuation of \refE{Ep}]
  \label{Ep2}
Let $\sss$ and $W$ be as in \refE{Ep}, and let $Q=\sss=\set{0,1}$.
\refT{T1}\ref{T1b} then yields 
$$
t(Q,\piw)=\int_{\sss^2}W(x,y)\dd\mu(x)\dd\mu(y)=p/4.$$
This shows that different $p$ yield different $\piw$. Consequently,
$\cpoo$ is uncountable.
\end{example}

\begin{example}
  \label{EP}
Let $P$ be a finite poset. Take $\sss=P$ and let the probability
measure $\mu$ be the uniform distribution on $P$: $\mu\set i=|P|\qw$
for every $i\in P$. Then $P$ becomes an \ops, and 
$W_P(x,y)\=\ett{x<_Py}$ is a strict kernel on $P$, see \refE{EW1}.
For any $Q\in\cP$, \refT{T1}\ref{T1b} shows that
$
t(Q,\piwp)=\P(x_i<_p x_j \text{ when $i<_Qj$})$
for \iid{} random vertices $x_i$ in $\cP$, which is just the
probability that the random mapping $i\mapsto x_i$ is a poset
homomorphism. Thus, writing $\pip\=\piwp$,
\begin{equation}\label{tp}
  t(Q,\pip)=t(Q,P)
\end{equation}
for all $Q\in\cP$.

We have shown that for every finite poset $P$ there is a poset limit
$\pip\in\cpoo$ such that \eqref{tp} holds for all $Q\in\cP$. Note that
this defines $\pip$ uniquely; however, the map $P\mapsto\pip$ is not
injective, as is shown by the example $\setoi\times\nn$ discussed
further in \refS{Slim} or the trivial posets in \refE{E0}.
Note also that the mapping is not surjective, since $\cP$ is countable
and $\cpoo$ is uncountable (\eg, by \refE{Ep2}). Hence only some
(exceptionally simple) poset limits can be represented as $\pip$ for a
finite poset $P$.
\end{example}

We can now state a convergence criterion in terms of 
the cut metric defined in \refS{Scut}. (See \cite{BCLSV1} for
the graph version.)

\begin{theorem}
    \label{Tcutp}
Let $(P_n)$ be a sequence of finite posets with
$|P_n|\to\infty$
and let $\Pi\in\cpoo$. Let $W_{P_n}$ be the kernel
defined by $P_n$ as in \refE{EP}, and let $W$ be
any kernel that represents $\Pi$. 
Then, as \ntoo,
$P_n\to\Pi\iff\dcut(W_{P_n},W)\to0$.
\end{theorem}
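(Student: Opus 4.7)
My plan follows the two-way strategy familiar from the graph-limit theory in \cite{BCLSV1}: the forward direction rests on a counting lemma, the reverse direction on compactness together with the uniqueness of kernel representations modulo $\dcut$-equivalence.

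\emph{Forward direction.} Assume $\dcut(W_{P_n},W)\to 0$. I would invoke a counting lemma, which should have been established in \refS{Scut}, asserting that for every finite poset $Q$ there is a constant $C_Q$ with $|t(Q,W_1)-t(Q,W_2)|\le C_Q\,\dcut(W_1,W_2)$ for all kernels $W_1,W_2$. Combining this with the identities $t(Q,\Pi_{W_{P_n}})=t(Q,P_n)$ from \refE{EP} (applied via \eqref{tp}) and $t(Q,\piw)=t(Q,\Pi)$ from \refT{T1}\ref{T1b}, the assumption yields $t(Q,P_n)\to t(Q,\Pi)$ for every $Q\in\cP$, which is precisely the definition of $P_n\to\Pi$.

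\emph{Reverse direction.} Assume $P_n\to\Pi$. I would argue by subsequences: it suffices to show that every subsequence of $(W_{P_n})$ has a further subsequence converging to $W$ in $\dcut$. By the compactness of the space of kernels modulo $\dcut$-equivalence (which I expect to be established earlier in analogy with the \Lovasz--Szegedy compactness theorem for graphons), some subsequence $W_{P_{n_k}}$ converges in $\dcut$ to a kernel $W'$. The counting lemma then gives $t(Q,W_{P_{n_k}})\to t(Q,W')$; but the left-hand side equals $t(Q,P_{n_k})\to t(Q,\Pi)=t(Q,W)$. Hence $t(Q,\Pi_{W'})=t(Q,\piw)$ for every $Q\in\cP$, so $W$ and $W'$ represent the same poset limit, and by the uniqueness discussion of \refS{Suniqueness} (two kernels represent the same poset limit iff their $\dcut$-distance vanishes) we conclude $\dcut(W_{P_{n_k}},W)\to 0$, as required.

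\emph{Main obstacle.} The argument above is short once the two prerequisites are in place; the substantial work lies in those prerequisites, both of which must be adapted from the graph setting. The counting lemma is essentially routine, being a multilinear estimate on kernel entries indexed by the pairs $i<_Q j$. The compactness of $(\bcw,\dcut)$, on the other hand, is the genuinely delicate ingredient: in the graph case it rests on a weak regularity lemma together with an averaging/sampling step, and here the new technical point is that the measure-preserving rearrangements of the underlying space $\sss$ must respect the order $\prec$, so the regularity/sampling machinery has to be carried out in the category of \ops{s} rather than ordinary probability spaces.
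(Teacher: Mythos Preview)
Your argument is correct and matches the paper's approach: the paper packages your two directions as the single statement (\refT{Tcutw}) that $W\mapsto\piw$ is a homeomorphism $(\bcwp,\dcut)\to\cpoo$, proved exactly as you outline --- continuity from the counting lemma (\refL{Lcut2x}), bijectivity from \refT{T1} and \refT{TU}, and then compactness of $\bcwp$ (\refT{Tcut2}) turns a continuous bijection into a homeomorphism. Your subsequence argument is the standard unpacking of that last step.

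Your ``main obstacle'' paragraph, however, misidentifies where the difficulty lies. The compactness is \emph{not} carried out in the category of ordered probability spaces, and the measure-preserving rearrangements in the definition of $\dcut$ are \emph{not} required to respect any order. The paper deliberately forgets the order in \refS{Scut}: it proves compactness for the full space $\bcw$ of all measurable $W:\sss^2\to\oi$ modulo $\dcut$ (\refT{Tcut1}), by the usual weak regularity argument, and then recovers the subspace $\bcwp$ of poset kernels as a \emph{closed} subset characterised by the functional identities $t(\sD_1,W)=t(\sD_2,W)$ and $t(\sD_3,W)=0$ (\refL{Lcut7}). These identities encode transitivity and acyclicity in a $\dcut$-continuous way, so closedness is immediate from the counting lemma. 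The order on $\sss$ is reconstructed \emph{a posteriori} from any such $W$ via \refL{Lcut5}. So the delicate point is not an ordered regularity lemma but rather the observation that ``being a kernel up to a null set'' can be detected by finitely many homomorphism densities.
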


Our second representation of graph limits uses \exch{} random posets.

\begin{definition}
  A random infinite poset (or digraph) on $\oo$ is \emph{\exch} if its
  distribution is invariant under every permutation of $\oo$.

Similarly,
an array $\set{I_{ij}}_{i,j=1}^\infty$, of random
variables is
\emph{(jointly) exchangeable} if 
the array $\set{I_{\gs(i)\gs(j)}}_{i,j=1}^\infty$ has the same
distribution as $\set{I_{ij}}_{i,j=1}^\infty$ for
every permutation $\sigma$ of $\oo$.
\end{definition}

Consequently, if $R$ is a random poset on $\oo$ and 
$I_{ij}\=\ett{i<_Rj}$, 
then $R$ is \exch{} if and only if the array \set{I_{ij}} is.

The random poset \poow{} defined in \refD{Dpnw} is evidently \exch,
and thus so is $\poopi$ in \refT{Tpoopi}. More generally, we can
construct \exch{} random infinite posets by taking mixtures of such
distributions, \ie, by taking $\poow$ or $\poopi$ with a random kernel
$W$ or a random graph limit $\Pi$ (which of course is assumed to be independent
of the other random variables $X_i$ and $\xiij$ in the construction);
\cf{} the classical de Finetti's theorem for \exch{} sequences of
random variables, see \eg{} \citet[Theorem 1.1]{Kallenberg:exch}.
Another of our main results is that this yields all \exch{} random
infinite posets, which can be seen as a de Finetti theorem for posets.
(It is a special case of the general representation theorem for
\exch{} arrays by Aldous and Hoover
\cite{Aldous,Hoover,Kallenberg:exch}.
Cf.\ the graph case in \cite{SJ209}.)
Moreover, the poset limits correspond to \exch{} random infinite
posets whose distribution is an extreme point in the set of all such
distributions, and this yields a unique representation of poset
limits as follows.
\begin{theorem}
  \label{TE}
  \begin{thmenumerate}
\item\label{terandom}
There is a one-to-one correspondence between distributions of random
elements $\Pi\in\cpoo$ and 
distributions of
\exch{} random infinite posets
$R\in\cpoo$ given by
$R\eqd\poopi$; this relation between $\Pi$ and $R$ is
equivalent to either of 
\begin{align}
  \label{te}  
\E t(Q,\Pi)&=\P(R\supset Q)\\
\intertext{or}
\E\tind(Q,\Pi)&=\P(R|_A=Q)   \label{tea}  
\end{align}
for every finite labelled poset $Q$ with a ground set\/ $A\subset\oo$. 
Furthermore, then $R\rest n\dto \Pi$ in $\cpq$ as \ntoo.
\item\label{tex}
There is a one-to-one correspondence between 
poset limits $\Pi\in\cpoo$ and extreme points of the set of
distributions of \exch{} random infinite posets
$R$.
This correspondence is given by 
$R\eqd\poopi$, or, equivalently,
either of
\begin{align}
 t(Q,\Pi)&=\P(R\supset Q)
\label{ce}
\intertext{or}
\tind(Q,\Pi)&=\P(R|_A=Q)   
\label{cea}  
\end{align}
for every finite labelled poset $Q$ with a ground set\/ $A\subset\oo$. 
Furthermore, then $R\rest n\asto \Pi$ in $\cpq$ as \ntoo.
  \end{thmenumerate}
\end{theorem}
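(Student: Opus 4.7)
The plan is to prove \refT{TE} by combining the kernel representation of poset limits (\refT{T1}, \refT{Tpoopi}) with the Aldous--Hoover representation theorem for jointly \exch{} $\setoi$-arrays, specialised to arrays arising from posets. The identities \eqref{te}, \eqref{tea}, \eqref{ce} and \eqref{cea} follow by taking expectations in \eqref{tqpi} and \eqref{tqpind}, and the \as{} convergence $R\rest n\asto\Pi$ is an instance of \refT{T1}\ref{T1a}. For the forward direction of \ref{terandom}, given a random $\Pi\in\cpoo$ I would pick (measurably in $\Pi$) a kernel $W$ representing $\Pi$ via \refT{T1} (for definiteness using one of the normalisations of \refT{T1+}), and then, independently of $W$, draw $(X_i)$ and $(\xiij)$ as in \refD{Dpnw} and set $R\=\poow$. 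Exchangeability of $R$ is immediate from the \iid{} structure of the auxiliary variables, and by \refT{Tpoopi} the distribution of $R$ depends only on $\Pi$, so we may write $R\eqd\poopi$.

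For the backward direction, given an \exch{} random infinite poset $R$ on $\oo$, set $I_{ij}\=\ett{i<_Rj}$. The array $\set{I_{ij}}$ is jointly \exch, so by Aldous--Hoover (see \citet{Kallenberg:exch}) there exist \iid{} $\uoi$ variables $\zeta_\emptyset$, $(\zeta_i)$, $(\zeta_{\set{i,j}})$ and a measurable $f$ with $I_{ij}=f(\zeta_\emptyset,\zeta_i,\zeta_j,\zeta_{\set{i,j}})$ for $i\ne j$. Conditional on $\zeta_\emptyset$ set $W(x,y)\=\int_0^1 f(\zeta_\emptyset,x,y,u)\dd u$. Antisymmetry of $R$ yields $W(x,y)+W(y,x)\le 1$ \aex, and transitivity combined with the independence of $\zeta_{\set{i,j}}$, $\zeta_{\set{j,k}}$, $\zeta_{\set{i,k}}$ yields $W(x,y)W(y,z)(1-W(x,z))=0$ \aex, which is \eqref{w2}. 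After enlarging the state space as in \refT{T1+}\ref{t1+oi2} to add an auxiliary coordinate that breaks ties in any residual pairwise ``symmetry'' between $W(x,y)$ and $W(y,x)$, and invoking \refR{Rpnw} to redefine $W$ on null sets, one obtains a bona fide kernel on a suitable \ops{} such that the resulting $\poow$ has the same conditional law as $R$ given $\zeta_\emptyset$. Setting $\Pi\=\piw\in\cpoo$, measurable in $\zeta_\emptyset$, we conclude $R\eqd\poow=\poopi$. The correspondence is one-to-one because conditional on $\Pi$, $R\rest n\eqd\pnp$, which converges \as{} to $\Pi$ by \refT{T1}\ref{T1a}; hence $\Pi$ is \as{} a measurable function of $R$.

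Part \ref{tex} is now short: the map of \ref{terandom} is affine in the law of $\Pi$ and therefore carries extreme points to extreme points, and the extreme laws on $\cpoo$ are precisely the point masses $\gd_{\Pi_0}$ with $\Pi_0\in\cpoo$, giving the claimed bijection; the identities \eqref{ce}--\eqref{cea} and the \as{} convergence $R\rest n\asto\Pi_0$ follow from \ref{terandom} specialised to $\Pi\equiv\Pi_0$. The main obstacle is the backward direction, specifically promoting the natural Aldous--Hoover candidate (which a priori only satisfies $W(x,y)+W(y,x)\le 1$ rather than the poset condition that at most one of $W(x,y)$, $W(y,x)$ is positive) to a kernel in the sense of \refD{D1} satisfying \eqref{w1} on an appropriate \ops; this requires exploiting the flexibility in \refT{T1+} and in \refR{Rpnw}, together with the full transitivity structure of the original poset $R$.
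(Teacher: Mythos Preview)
Your approach is essentially the reverse of the paper's logical ordering, and this creates a circularity you have not addressed. In the paper, \refT{TE} (together with \refT{TE2}, \refT{TC1} and \refT{T2}) is proved \emph{first}, by transporting the abstract arguments of \cite[\S\S3--5]{SJ209} to the poset setting; no kernels are used at this stage, and the correspondence is established purely through the characterising identities \eqref{te}--\eqref{cea}. Only afterwards is \refT{T1} proved, and its proof (both directions) explicitly invokes \refT{TE}\ref{tex} and \refT{TE2}. The identification $R\eqd\poopi$ in the statement of \refT{TE} is in fact deferred until after \refT{T1} is available (see the paragraph following \refT{T2} in \refS{Sexch}). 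So when you write ``combining the kernel representation of poset limits (\refT{T1}, \refT{Tpoopi})'' to prove \refT{TE}, you are invoking results whose proofs in this paper depend on the very theorem you are proving.

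Your route could be made to work as an alternative ordering, but you would then owe an independent proof of \refT{T1}: the forward direction (i)--(ii) can be obtained directly by a U-statistic or reverse-martingale argument showing $t(Q,\pnw)\to t(Q,W)$ \as, and your backward direction is effectively a conditional (on $\zeta_\emptyset$) version of the paper's proof of the ``moreover'' part of \refT{T1}. Two smaller points: the measurable selection ``pick (measurably in $\Pi$) a kernel $W$'' is not supplied by \refT{T1} or \refT{T1+}, though it can be bypassed by working directly with the law of $\pnp$ via \eqref{tqpind}; and your antisymmetry step is off---since $\xi_{\set{i,j}}$ is shared by $I_{ij}$ and $I_{ji}$, one does not get $W(x,y)+W(y,x)\le1$ or $W(x,y)W(y,x)=0$ directly. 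The paper instead deduces antisymmetry of $W$ from transitivity \eqref{www} and the absence of $3$-cycles \eqref{wc3}, via the Lebesgue-point modification.
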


We can characterize these extreme point distributions of \exch{}
random infinite posets as follows.
Let  $\cploo$ be the space of all (labelled) infinite posets on $\oo$.
This can be seen as a subset of the product space
$\set{0,1}^{\bbN\times\bbN}$, using indicators $I_{ij}$ as above. 
We
equip this product space with the product topology, which is compact
and metric; then  
$\cploo$ is a closed subset and thus itself a compact metric space.

\begin{theorem}
  \label{TE2}
Let $R$ be an \exch{} random infinite poset.
Then the following are equivalent.
\begin{romenumerate}
  \item\label{te2a}
The distribution of $R$ is an extreme point in the set of \exch{}
distributions in the space $\cploo$ of all labelled infinite
posets on $\oo$.
  \item\label{te2b'}
If $Q_1$ and $Q_2$ are two finite posets with disjoint ground sets
contained in $\bbN$, then
\begin{equation*}
  \P(R\supset Q_1\cup Q_2)=\P(R\supset Q_1)\P(R\supset Q_2).
\end{equation*}
(Here $Q_1\cup Q_2$ denotes the poset with ground set
$Q_1\gr\cup Q_2\gr$ and $x<_Qy\iff
x<_{Q_1}y\text{ or } x<_{Q_2}y$; in particular
$x\not<_Qy$ if $x\in Q_1\gr$ and $y\in
Q_2\gr$ or conversely.)
  \item\label{te2b}
The restrictions $R\rest k$ and $R\restx{[k+1,\infty)}$ are
  independent for every $k$.
  \item\label{te2c}
Let $\cF_n$ be the $\gs$-field generated by
$R\restx{[n,\infty)}$. Then the tail $\gs$-field
  $\bigcap_{n=1}^\infty \cF_n$ is trivial, \ie, contains only events with
  probability $0$ or $1$.
\end{romenumerate}
\end{theorem}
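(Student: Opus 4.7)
The plan is to establish a cycle $(i) \Leftrightarrow (ii) \Leftrightarrow (iii) \Leftrightarrow (iv)$, using $\refT{TE}$ to bridge extremality with the poset-limit representation, a $\pi$-system argument for $(ii) \Leftrightarrow (iii)$, and a reverse martingale convergence argument for $(iii) \Leftrightarrow (iv)$. For $(i) \Leftrightarrow (ii)$: by $\refT{TE}\ref{terandom}$, $R \eqd \poopi$ for some random $\Pi \in \cpoo$, and by $\refT{TE}\ref{tex}$ the distribution of $R$ is extreme iff $\Pi$ is \as{} deterministic. For any kernel $W$ representing a value of $\Pi$, the integral in \eqref{t1b} factors by Fubini over the disjoint index sets $Q_1\gr$ and $Q_2\gr$, giving $t(Q_1 \cup Q_2, \Pi) = t(Q_1, \Pi)\, t(Q_2, \Pi)$. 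Combined with \eqref{te}, condition (ii) becomes $\E[t(Q_1, \Pi)\, t(Q_2, \Pi)] = \E\,t(Q_1, \Pi) \cdot \E\,t(Q_2, \Pi)$; taking $Q_1$ and $Q_2$ to be two copies of the same unlabelled $Q$ on disjoint ground sets yields $\Var\,t(Q, \Pi) = 0$ for every $Q \in \cP$. Since a proper poset limit is determined by its $t(Q,\cdot)$-values, $\Pi$ is then \as{} constant, hence deterministic, so (ii) implies (i); the converse follows from the same factorization.

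For $(ii) \Leftrightarrow (iii)$: the collection $\cG_k = \set{\set{R \supset Q} : Q \in \cP,\ Q\gr \subset [k]}$ is a $\pi$-system, since the intersection of $\set{R \supset Q}$ and $\set{R \supset Q'}$ equals $\set{R \supset Q''}$, where $Q''$ is the transitive closure of $Q \cup Q'$, or is empty if that closure has a cycle; and $\cG_k$ generates $\sigma(R|_{[k]})$, as the single-pair posets give the defining indicators $\ett{i <_R j}$ for $i, j \in [k]$. The analogous statements hold for the tail $[k+1, \infty)$. Condition (ii), restricted to pairs of posets with ground sets in $[k]$ and $[k+1, \infty)$ respectively, is exactly independence of the two $\pi$-systems, which extends to independence of the generated $\sigma$-fields by the standard $\pi$--$\lambda$ theorem, giving (iii). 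For the converse, given arbitrary disjoint $Q_1, Q_2$ with $Q_j\gr \subset \bbN$, pick a finite permutation $\tau$ of $\bbN$ such that $\tau(Q_1\gr) \subset [k]$ and $\tau(Q_2\gr) \subset [k+1, \infty)$ for some $k$, and invoke exchangeability and (iii).

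For $(iii) \Leftrightarrow (iv)$: the direction $(iii) \Rightarrow (iv)$ is immediate, as any tail event $C$ lies in $\cF_{k+1}$ for every $k$, hence by (iii) is independent of $\sigma(R|_{[k]})$ for every $k$, hence of $\sigma(R)$, hence of itself, so $\P(C) \in \setoi$. For $(iv) \Rightarrow (iii)$, fix $A \in \sigma(R|_{[k]})$ and $B \in \sigma(R|_{[k+1, N]})$ for some $N$ (such events generate $\sigma(R|_{[k+1, \infty)})$). For $n \geq N - k$, let $\tau_n$ be a finite permutation fixing $[k]$ and mapping $[k+1, N]$ into $[k+n+1, \infty)$, and let $B_n \in \cF_{k+n+1}$ be the corresponding shifted event. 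Exchangeability yields $\P(A \cap B) = \P(A \cap B_n)$ and $\P(B_n) = \P(B)$, and the tower property gives
\[
\P(A \cap B_n) = \E\bigsqpar{\ett{B_n}\, \E[\ett{A} \mid \cF_{k+n+1}]}.
\]
By reverse martingale convergence together with (iv), $\E[\ett{A} \mid \cF_{k+n+1}] \to \E[\ett{A} \mid \bigcap_m \cF_m] = \P(A)$ in $L^1$ as $n \to \infty$; combining yields $\P(A \cap B) = \P(A)\, \P(B)$. The main technical obstacle is precisely this last step: the shifted-index trick must be combined cleanly with reverse martingale convergence and triviality of the tail; the other implications reduce to Fubini, a standard $\pi$-system argument, or direct exchangeability.
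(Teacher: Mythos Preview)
Your argument is essentially sound, and in fact the paper does not give a proof of \refT{TE2} at all: it simply observes that Sections 3--5 of \cite{SJ209} carry over verbatim to posets and records that \refT{TE2} corresponds to \cite[Theorem~5.5]{SJ209}. So your self-contained argument is considerably more explicit than what the paper provides.

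There is, however, one genuine circularity you should repair. In the step $(i)\Leftrightarrow(ii)$ you justify the factorization $t(Q_1\cup Q_2,\Pi)=t(Q_1,\Pi)\,t(Q_2,\Pi)$ by invoking a kernel $W$ representing $\Pi$ and the integral formula \eqref{t1b}. But \eqref{t1b} is part of \refT{T1}, and in the paper's logical order the proof of \refT{T1} (in \refS{Skernels}) \emph{uses} \refT{TE2}: it checks that $P(\infty,W)$ satisfies condition \ref{te2b'} and then appeals to \ref{te2b'}$\Rightarrow$\ref{te2a} to conclude extremality. So as written your proof of \refT{TE2} relies on a theorem whose proof relies on \refT{TE2}.

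The fix is immediate and avoids kernels entirely: for any finite poset $P$ and disjoint $Q_1,Q_2$, a map $Q_1\gr\cup Q_2\gr\to P$ is a homomorphism of $Q_1\cup Q_2$ iff its restrictions are homomorphisms of $Q_1$ and of $Q_2$, and the two restrictions are independent uniform maps; hence $t(Q_1\cup Q_2,P)=t(Q_1,P)\,t(Q_2,P)$ for all finite $P$, and the identity extends to every $\Pi\in\cpoo$ by continuity. With this replacement your chain $(i)\Leftrightarrow(ii)\Leftrightarrow(iii)\Leftrightarrow(iv)$ goes through; the $\pi$-system and reverse-martingale steps are correct as stated.
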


There is also a more direct relation between poset limits and \exch{}
random infinite posets, without going through kernels and $\poopi$.
Poset limits are limits of unlabelled finite posets. For labelled
finite posets there is another, more elementary, notion of a limit as
an infinite poset. More precisely, if $P_n$ is a labelled poset on the
ground set $[N_n]$ for some finite $N_n$ with $N_n\to\infty$ as \ntoo,
and $R$ is a poset on $\oo$, we say that $P_n\to R$ if, for every pair
$(i,j)\in\bbN^2$, $\ett{i<_{P_n} j}\to\ett{i<_R j}$, \ie, 
if $i<_Rj$ then $i<_{P_n}j$ for all large $n$ and 
if $i\not<_Rj$ then $i\not<_{P_n}j$ for all large $n$.
Equivalently, we may regard each $P_n$ as an element of the space
$\cploo$ of posets on $\bbN$ by adding an infinite number of points
incomparable to everything else (in fact, any extension to $\bbN$
would do, but it seems natural to choose the trivial one); then
$P_n\to R$ in this sense just means convergence in $\cploo$ with the
topology just introduced. For unlabelled posets, we can always choose
a labelling. Of course, the choice of labelling may affect the result,
so we choose a random labelling. Thus, if $P$ is a finite unlabelled
poset, we let $\hP$ be the labelled poset obtained by randomly
labelling $P$ by $1,\dots,|P|$, with the same probability $1/|P|!$ for
each possible labelling. As above, we can also extend $\hP$ to a
random poset on $\oo$, which we by abuse of notation still denote by
$\hP$.
The appropriate limit for a sequence $(P_n)$ of finite posets then is
limit in distribution of $(\hP_n)$ as random elements of the compact
metric space $\cploo$. This turns out to be equivalent to convergence
of the unlabelled posets $P_n$ as defined above (and in \refD{Dconv}
below), \ie, in $\cpq$.

\begin{theorem}\label{TC1}  Let $(P_n)$ be a sequence of finite
  unlabelled posets  and   assume that 
  $|P_n|\to\infty$. Then the following are equivalent.
  \begin{romenumerate}
\item
$P_n\to \Pi$ in $\cpq$ for some  $\Pi\in\cpq$.	
\item
$\hP_n\dto R$ in $\cploo$ for some random $R\in\cploo$.	
  \end{romenumerate}
If these hold, then
$R$ is \exch, and $R\eqd\poopi$; consequently,
\eqref{ce} and \eqref{cea} hold
for every finite labelled poset $Q$ with a ground set\/ $A\subset\oo$. 
\end{theorem}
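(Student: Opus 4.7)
The plan is to relate the finite-dimensional distributions of $\hP_n$ on $\cploo$ to the testing functionals $\tind(Q,P_n)$, and then invoke the equivalence of the various convergence notions for $P_n$ in $\cpq$.

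First, I would establish that, for every finite labelled poset $Q$ on a ground set $A\subset\bbN$ with $\max A\le|P_n|$,
\begin{equation*}
\P\bigpar{\hP_n|_A=Q}=\tind(Q,P_n).
\end{equation*}
Indeed, on this event the random labelling of $\hP_n$ restricted to $A$ is a uniform random injection $A\to P_n$, and $\hP_n|_A=Q$ is precisely the event that this injection is an isomorphism of $Q$ onto an induced subposet of $P_n$. Since $|P_n|\to\infty$, the identity holds for each fixed $A$ once $n$ is large enough.

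Convergence in distribution in the compact product space $\cploo$ is equivalent to convergence of every finite-dimensional marginal, \ie{} of $\P(\hP_n|_A=Q)$ for every finite labelled $Q$. By the identity above this is equivalent to convergence of $\tind(Q,P_n)$ for every such $Q$; and via the invertible linear relations among $t$, $\tinj$, $\tind$ and the fact that $|t(Q,P_n)-\tinj(Q,P_n)|\to 0$ as $|P_n|\to\infty$, this is in turn equivalent to $P_n\to\Pi$ in $\cpq$ for some $\Pi$, necessarily in $\cpoo$ since $|P_n|\to\infty$. Chaining these equivalences yields (i)$\Leftrightarrow$(ii) together with the identification $\P(R|_A=Q)=\tind(Q,\Pi)$ in the limit. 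By \eqref{tqpind} the right-hand side equals $\P(\poopi|_A=Q)$, so $R\eqd\poopi$, which automatically gives exchangeability of $R$ and, via \eqref{tqpi}, the formulas \eqref{ce}--\eqref{cea}.

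The one subtle point is that the extended $\hP_n$ is \emph{not} itself exchangeable on all of $\bbN$: a permutation mixing positions in $[|P_n|]$ with positions beyond $|P_n|$ shifts the artificial incomparable tail and breaks the symmetry. I would therefore argue exchangeability at the level of the finite restrictions $\hP_n|_{[N]}$, which are genuinely exchangeable on $[N]$ as soon as $|P_n|\ge N$, and only pass to the limit after restricting. This bookkeeping is the main (and rather minor) obstacle; the remainder is definitional unwinding together with an appeal to the representation results in \refT{T1} and \refT{TE}.
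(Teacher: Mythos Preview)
Your argument is correct and follows essentially the same path as the paper: the key identity $\P(\hP_n|_A=Q)=\tind(Q,P_n)$ for $n$ large, convergence of finite-dimensional marginals in the product space $\cploo$, and the identification $R\eqd\poopi$ via \eqref{tqpind}. The paper establishes this by appeal to the graph analogues in \cite[Theorems 4.1 and 5.2]{SJ209} (after first stating \refT{T2}), and sketches exactly the computation $\P(\hpn|_A=Q)=\tind(Q,P_n)\to\tind(Q,\Pi)$; it then derives \eqref{ce} from \eqref{cea} by summing over $Q'\supseteq Q$ rather than via \eqref{tqpi} as you do, but this is an immaterial variation.
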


Finally, we note that by regarding posets as digraphs, we obtain an embedding
$\cP\subset\cD$ which extends to an embedding $\cpq\subset\cdq$. The
poset limits can thus be seen as special digraph limits. We
characterize the digraph limits that are poset limits in several ways
in \refT{TPQ}.

Sections \ref{Sprel}--\ref{Slim} contain definitions
and some basic properties of poset limits. 
The theorems above are proven in Sections
\ref{Sexch}--\ref{Skernels} and \ref{Scut2}.
The cut metric is defined and studied in Sections
\refand{Scut}{Scut2};
in particular we show that it makes the set of all kernels into a
compact metric space, which is homeomorphic to $\cpoo$
(Theorems \refand{Tcut2}{Tcutw}.)
The (lack of) uniqueness of the representation by kernels is discussed
in \refS{Suniqueness}, where conditions for equivalence are given.
Further examples are given in \refS{Sex}. 
The relation between poset limits and digraph limits is discussed
further in \refS{SD}, and the final \refS{Sfurther}
contains further comments.

\section{Preliminaries}\label{Sprel}

We consider both labelled and unlabelled posets and digraphs. 
We use for convenience $[n]$ as our standard
ground set for labelled posets and vertex set for labelled digraphs,
\ie, we use the labels $1,2,\dots$.

A \emph{digraph} ({directed graph}) $G$ consists of a vertex set
$V(G)$ and an edge set $E(G)\subseteq V(G)\times V(G)$; 
the edge indicators thus form an arbitrary zero--one matrix
\set{X_{ij}}, $i,j\in V(G)$. 
We let $|G|$ denote the number of vertices.
Unless we state otherwise explicitly, we assume that $1\le
|G|<\infty$, but we will also sometimes consider infinite digraphs. 

Let, for $n\in\bbN$,  
$\DLn$ be the set of the $2^{n^2}$ labelled digraphs
with vertex set $[n]$ and let $\DUn$ be the 
set of unlabelled digraphs with $n$
vertices; $\DUn$ can formally be defined as the quotient set
$\DLn/\cong$ modulo isomorphisms.
Further, let
$\DL\=\bigcup_{n\ge1}\DLn$ and $\DU\=\bigcup_{n\ge1}\DUn$; thus $\DU$
is the set of finite unlabelled graphs.

Similarly,
let 
$\PLn$ be the set of all posets
with ground set $[n]$ and let $\PUn$ be the quotient
set $\PLn/\cong$ of unlabelled posets with $n$ vertices, and let
$\PL\=\bigcup_{n\ge1}\PLn$ and $\PU\=\bigcup_{n\ge1}\PUn$, the set of
finite unlabelled posets.

As said above, we can regard every poset as a digraph. This works for
both labelled and unlabelled posets and yields the
inclusions 
$\PLn\subset\DLn$,
$\PUn\subset\DUn$,
$\PL\subset\DL$,
$\PU\subset\DU$.
Further, every labelled poset or digraph can be regarded as an
unlabelled one by ignoring the labels. Hence it often does not matter
whether the posets and digraphs are labelled or not, but we shall be
explicit the times it does matter.

We can characterize the digraphs that are posets  using a
few special digraphs.
Let, for $n\ge1$,  $\sC_n$ be the directed cycle with $n$ vertices and
$n$ edges,
and let $\sP_n$ be the directed path with $n+1$ vertices and
$n$ edges. 
(Thus $\sC_1$ is a loop and $\sC_2$ a double edge.) 
We regard these as unlabelled digraphs.  
Note that these, except $\sP_1$, are \emph{not} posets.
Moreover, if $G$ is a digraph, consider the relation $i\to j$, meaning
that there is an edge from $i$ to $j$. This relation is irreflexive if
and only if $G$ contains no loop, \ie{} no subgraph $\sC_1$. Similarly,
it is asymmetric if and only if $G$ contains no double edge, \ie{} no
$\sC_2$. Assuming these properties of $G$, 
if $x,y,z$ are three vertices such that $x\to y$ and $y\to z$, then
necessarily $x$, $y$ and $z$ are distinct, and either $z\to x$ or
$\set{x,y,z}$ induces a subgraph $\sP_2$ or $\sC_3$; consequently, 
the relation then is transitive
if and only if there is no such induced subgraph. We have proven the following
characterization.

\begin{lemma}\label{L1}
A (finite or infinite) digraph $G$ is a poset if and only if it does
not have any induced subgraph 
$\sC_1$, $\sC_2$, $\sC_3$, or $\sP_2$. \nopf
\end{lemma}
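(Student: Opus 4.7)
The plan is to prove the two directions separately, checking each of the three defining axioms of a (strict) partial order: irreflexivity, asymmetry, and transitivity.

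For the forward direction, I would show that each of the four forbidden digraphs corresponds to a violation of one of the poset axioms. A loop $\sC_1$ at $x$ would mean $x<x$, contradicting irreflexivity. A double edge $\sC_2$ on $\set{x,y}$ gives both $x<y$ and $y<x$, contradicting asymmetry. A directed triangle $\sC_3$ on $\set{x,y,z}$ with edges $x\to y\to z\to x$ forces $x<z$ by transitivity and $z<x$ by hypothesis, again contradicting asymmetry. Finally, an induced $\sP_2$ on $\set{x,y,z}$ with edges $x\to y$ and $y\to z$ but no edge from $x$ to $z$ is a direct violation of transitivity. Hence no induced copy of any of $\sC_1,\sC_2,\sC_3,\sP_2$ can occur in a poset.

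For the reverse direction, assume $G$ has no induced copy of $\sC_1$, $\sC_2$, $\sC_3$, or $\sP_2$, and write $i\to j$ for the edge relation. Irreflexivity is immediate from the absence of $\sC_1$, and asymmetry from the absence of $\sC_2$. To verify transitivity, let $x\to y$ and $y\to z$. By irreflexivity $x\neq y$ and $y\neq z$, and if $x=z$ then $\set{x,y}$ induces a $\sC_2$, which is excluded; so $x$, $y$, $z$ are pairwise distinct. By asymmetry, the induced subgraph on $\set{x,y,z}$ cannot contain $y\to x$ or $z\to y$. The remaining possible edges between these vertices are $x\to z$ and $z\to x$. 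If $x\to z$, we are done. If $z\to x$, then $\set{x,y,z}$ induces a $\sC_3$, which is excluded. If neither edge is present, the induced subgraph is exactly $\sP_2$, which is also excluded. Thus $x\to z$, completing the proof of transitivity.

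There is no real obstacle here: the proof is a short case analysis, and in fact the discussion immediately preceding the lemma statement already carries it out in full — the lemma simply records the conclusion. The only minor point that requires care is justifying that $x$, $y$, $z$ are distinct in the transitivity step before invoking the $\sP_2$/$\sC_3$ dichotomy, which is why both irreflexivity and asymmetry must be established first.
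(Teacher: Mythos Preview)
Your proof is correct and follows exactly the same approach as the paper: the discussion preceding the lemma already carries out this case analysis (irreflexivity $\leftrightarrow$ no $\sC_1$, asymmetry $\leftrightarrow$ no $\sC_2$, transitivity $\leftrightarrow$ no induced $\sP_2$ or $\sC_3$ given the first two), and the lemma is stated without proof precisely because that discussion suffices. Your write-up is slightly more explicit about the forward direction and about why $x,y,z$ must be distinct, but there is no substantive difference.
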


\section{Digraph and poset limits}\label{Slim}

We repeat some of the notation and results for digraphs in
\cite{SJ209} and give corresponding results for posets.

If $G$ is an (unlabelled) digraph and $v_1,\dots,v_k$ is a sequence of
vertices in $G$, then $G(v_1,\dots,v_k)$ denotes the labelled digraph
with vertex set $[k]$ where we put an edge $i\to j$ if
$v_i\to v_j$ in $G$. 
We allow the possibility that $v_i=v_j$ for some $i$ and $j$. 

We let $G\xk$, for $k\ge1$, be the random digraph $G(v_1,\dots,v_k)$
obtained by sampling $\vvk$ uniformly at random among the vertices of
$G$, with replacement. In other words, $\vvk$ are independent
uniformly distributed random vertices of $G$.

For $k\le |G|$, we further let $G\xk'$ be the random digraph $G(\vvki)$
where we sample $\vvki$ uniformly at random without replacement; the
sequence $\vvki$ is thus a uniformly distributed random sequence of
$k$ distinct vertices.
Hence, $G\xk'$ is the induced subgraph on a random set of $k$
vertices, with the vertices relabelled $1,\dots,k$.

For a finite poset $P$, we similarly define
$P(v_1,\dots,v_k)$, $P\xk$ and $P\xk'$ (the latter if $k\le|P|$);
these are posets with ground set $\xk$, and 
$P\xk$ and $P\xk'$ are random. Note that these definitions are
consistent with our identification of cosets and (certain) digraphs: 
for example, $P\xk$ is the same for the poset $P$
as for $P$ regarded as a digraph.

The graph limit theory in \cite{LSz} and subsequent papers is based on 
the study of the functional $t(F,G)$ 
which is defined for two graphs $F$ and $G$ as the proportion of all
mappings $V(F)\to V(G)$ that are
graph homomorphisms $F\to G$.
In probabilistic terms, $t(F,G)$ is the probability that a uniform random 
mapping $V(F)\to V(G)$ is a graph homomorphism.
For the digraph version, see \cite{SJ209},
$\gf:V(F)\to V(G)$ is a homomorphism if $i\to j$ in $F$
implies $\gf(i)\to\gf(j)$ in $G$. Thus, using the notation just
introduced and
assuming that $F$ is labelled and $k=|F|$,
we can write the definition as
\begin{equation}
  \label{t}
t(F,G)\=\P\bigpar{F\subseteq G\xk}.
\end{equation}
Note that both $F$ and $G\xk$ are digraphs on the same vertex set $\xk$,
so the relation $F\subseteq G\xk$ is well-defined as $E(F)\subseteq E(G\xk)$.
We further define, again 
following \cite{LSz} (and the notation of \cite{BCLSV1} and \cite{SJ209}),
 with $k=|F|$ as in \eqref{t}, 
\begin{align}
  \tinj(F,G)&\=\P\bigpar{F\subseteq G\xk'},\label{tinj}
\intertext{the proportion of injective maps 
$V(F)\to V(G)$ that are graph homomorphisms, 
and}
  \tind(F,G)&\=\P\bigpar{F= G\xk'},  \label{tind}
\end{align}
provided $F$ and $G$ are digraphs with $|F|\le |G|$.
If $|F|>|G|$ we set $\tinj(F,G)\=\tind(F,G)\=0$.
Note that although the relations $F\subseteq G\xk$, $F\subseteq G\xk'$
and $F= G\xk'$ 
may depend on the
labelling of $F$, the probabilities  in \eqref{t}--\eqref{tind} do
not, by symmetry, 
so $t(F,G)$, $\tinj(F,G)$   and $\tind(F,G)$  are
well defined  for unlabelled $F$ and
$G$ (by choosing any labellings).

The definitions \eqref{t}--\eqref{tind} can be used for finite posets
too. Thus, if $P$ and $Q$ are finite (unlabelled) posets, then $t(P,Q)$,
$\tinj(P,Q)$ and $\tind(P,Q)$ are defined as numbers in $\oi$. Note
that these numbers are the same as if we regard $P$ and $Q$ as
digraphs; we will therefore use the same notation for the poset case
as for the digraph case.

The basic definition of \Lovasz{} and Szegedy 
\cite{LSz} and 
\Borgsetal{} \cite{BCLSV1} 
is that a sequence $(G_n)$ of graphs
converges if $t(F,G_n)$ converges for every graph $F$. 
As in \cite{SJ209}, we modify this by requiring also that $|G_n|$
converges to some finite or infinite limit.
We let,
as in \cite{SJ209}, 
$\cdd$ be the union of $\cD$ and some one-point set \set{*} and
define
the mappings
$\tau,\tauinj,\tauind:\DU\to\oid$ 
and
$\taup:\cD\to\oidd=\oid\times\oi$ by
\begin{align}
  \tau(G)&\=(t(F,G))_{F\in\cD}\in\oid,
\\
  \tauinj(G)&\=(\tinj(F,G))_{F\in\cD}\in\oid,
\\
  \tauind(G)&\=(\tind(F,G))_{F\in\cD}\in\oid,
\\
  \taup(G)&\=\bigpar{\tau(G),\,|G|\qw}\in\oidd.
\end{align}
For posets we similarly define
$\cpp\=\cP\cup\set{*}$ and the mappings
$\ptau,\ptauinj,\ptauind:\PU\to\oip$ and 
$\ptaup:\cP\to\oipp=\oip\times\oi$
by considering $F$ in $\cP$ only; these mappings can thus be obtained
from $\tau,\tauinj,\tauind,\taup$ by a projection selecting some
coordinates only. 

The mappings $\tau$ and $\ptau$ are not injective on $\cP$. For example (the
poset version of an example in \cite{LSz} and \cite{BCLSV1}), the posets
$\set{0,1}\times \xn$ with $(x_1,y_1)<(x_2,y_2)$ if $x_1<x_2$ have the
same images under $\tau$ and $\ptau$ for all $n\in\bbN$. However, it
is easy to see
that $\taup,\tauinj$ and $\tauind$ are injective on $\DU$,
\cf{} \cite{SJ209}, and, similarly, that
$\ptaup,\ptauinj$ and $\ptauind$ are injective on $\PU$, 
(This uses the special definitions of $\tauinj(F,G)$ and
$\tauind(F,G)$ 
when $|F|>|G|$.)

Although the mappings $\ptaup, \ptauinj, \ptauind$ contain only part of the
information in $\taup$ and so on, the injectivity of them shows that
they in fact contain all possible information. This is also seen in
the following stronger result concerning limits.

\begin{theorem}\label{Ttau}
  Suppose that $P_n$ is a sequence of finite posets.
Then the following conditions are equivalent.
  \begin{romenumerate}
\item
$\ptaup(P_n)$ converges in $\oipp$, \ie{}	
$t(Q,P_n)$ converges for every poset $Q\in\cP$
and $|P_n|$ converges to some limit in $\bbNx$.
\item
$\ptauinj(P_n)$ converges in $\oip$, \ie{}	
$\tinj(Q,P_n)$ converges for every poset $Q\in\cP$.
\item
$\ptauind(P_n)$ converges in $\oip$, \ie{}	
$\tind(Q,P_n)$ converges for every poset $Q\in\cP$.
\item
$\taup(P_n)$ converges in $\oidd$, \ie{}	
$t(F,P_n)$ converges for every digraph $F\in\cD$
and $|P_n|$ converges to some limit in $\bbNx$.
\item
$\tauinj(P_n)$ converges in $\oid$, \ie{}	
$\tinj(F,P_n)$ converges for every digraph $F\in\cD$.
\item
$\tauind(P_n)$ converges in $\oid$, \ie{}	
$\tind(F,P_n)$ converges for every digraph $F\in\cD$.
  \end{romenumerate}
\end{theorem}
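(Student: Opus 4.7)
The plan is to deduce the six-way equivalence from the corresponding equivalence (iv)$\iff$(v)$\iff$(vi) for digraph limits, which is established in \cite{SJ209}, together with a short poset-to-digraph reduction that collapses each digraph statistic to its transitive-closure counterpart. The implications (iv)$\Rightarrow$(i), (v)$\Rightarrow$(ii), (vi)$\Rightarrow$(iii) are immediate, since the $\ptau$-maps are obtained from the $\tau$-maps simply by restricting the index set from $\cD$ to $\cP$; the substance is in the digraph equivalence and in the converse implications.

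First I will invoke the digraph equivalence. Applied to the sequence $(P_n)$ regarded as a sequence of digraphs, it gives (iv)$\iff$(v)$\iff$(vi) directly. The standard ingredients behind it are: the bound $|t(F,G)-\tinj(F,G)|=O(|F|^2/|G|)$, so that $t$ and $\tinj$ are asymptotically interchangeable once $|G_n|\to\infty$; the finite linear relation $\tinj(F,G)=\sum_{F'\supseteq F}\tind(F',G)$ where $F'$ ranges over digraphs on $V(F)$ containing $F$ as a subgraph, which is invertible by M\"obius inversion; and the recovery of $|G_n|$ from $\tauinj(G_n)$ by taking $F$ to be the digraph on $k$ vertices with no edges, for which $\tinj(F,G_n)=\ett{k\le|G_n|}$, so convergence for each $k$ forces $|G_n|$ to converge in $\bbNx$.

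Next I will prove (i)$\Rightarrow$(iv), (ii)$\Rightarrow$(v), (iii)$\Rightarrow$(vi) by reducing every digraph statistic to a poset one. For any $F\in\cD$ and any poset $P$: (a) if $F$ contains a directed cycle (including a loop or a double edge) then $t(F,P)=\tinj(F,P)=0$, since any homomorphism $F\to P$ would produce a cycle in $<_P$; (b) if $F$ is acyclic, its transitive closure $F^\star:=\mathrm{TC}(F)$ belongs to $\cP$, and transitivity of $<_P$ yields that a map $\phi:V(F)\to V(P)$ is a homomorphism iff $\phi:V(F^\star)\to V(P)$ is, whence $t(F,P)=t(F^\star,P)$ and $\tinj(F,P)=\tinj(F^\star,P)$; (c) for $\tind$, any induced subgraph of a poset is itself a poset (by \refL{L1}), so $\tind(F,P)=0$ whenever $F\in\cD\setminus\cP$. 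In every case $t(F,P_n)$, $\tinj(F,P_n)$ and $\tind(F,P_n)$ are either identically zero or equal to the corresponding quantity for some $Q\in\cP$, so convergence of the poset statistics transfers to convergence of the digraph statistics.

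The main obstacle is the digraph equivalence (iv)$\iff$(v)$\iff$(vi) itself; once that is available, the poset-to-digraph reduction is a routine application of \refL{L1} and of transitivity of $<_P$. Since the digraph equivalence is an adaptation of the graph-limit arguments of \citet{LSz,BCLSV1} and is treated in \cite{SJ209}, it can be cited here as a black box.
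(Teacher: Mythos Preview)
Your proof is correct, and it reaches the conclusion by a route that differs in one genuine respect from the paper's argument. The paper proves all four ``horizontal'' equivalences (i)$\iff$(ii), (ii)$\iff$(iii), (iv)$\iff$(v), (v)$\iff$(vi) directly (via the standard $|t-\tinj|=O(|F|^2/|G|)$ bound and the M\"obius relation between $\tinj$ and $\tind$), and then links the poset column to the digraph column \emph{only at the $\tind$ level}, using the single observation that $\tind(F,P)=0$ whenever $F\in\cD\setminus\cP$. You instead take the digraph equivalence (iv)$\iff$(v)$\iff$(vi) as a black box and link the two columns at \emph{every} level, using the transitive-closure reduction $t(F,P)=t(F^\star,P)$ and $\tinj(F,P)=\tinj(F^\star,P)$ for acyclic $F$ (and $=0$ otherwise). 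This transitive-closure trick is not in the paper's proof; it is a clean extra observation that makes explicit why the poset statistics already determine the digraph ones for $t$ and $\tinj$, not just for $\tind$. The paper's route is slightly more economical (only one bridge is needed), while yours is more informative about the relationship between the two index sets. Either way the underlying analytic ingredients are the same.
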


\begin{proof}
  It is easily seen that each of the conditions implies that $|P_n|$
  converges to a limit in $\bbnn$.
Further, if $|P_n|$ converges to a finite limit, each of the
  statements implies that $P_n=P$ for all sufficiently large $n$ and
  some (unlabelled) poset $P\in\cP$.

It thus suffices to consider the case $|P_n|\to\infty$.
In this case, for every $F\in\cD$,
$t(F,P_n)-\tinj(F,P_n)=O(|F|^2/|P_n|)\to0$, 
see \cite{LSz} and \cite{SJ209}, 
and thus (i)$\iff$(ii) and (iv)$\iff$(v).

Further, see \cite{BCLSV1}, \cite{LSz} or \cite{SJ209} for the easy details,
one can go between the two families
$\set{\tinj(F,\cdot)}_{F\in\DU}$ and 
$\set{\tind(F,\cdot)}_{F\in\DU}$ of  functionals on $\DU$
by summation and inclusion-exclusion,
and for posets a similar argument holds for the families
$\set{\tinj(F,\cdot)}_{F\in\PU}$ and 
$\set{\tind(F,\cdot)}_{F\in\PU}$;
hence it follows that (ii)$\iff$(iii) and (v)$\iff$(vi).

Finally, (iii)$\iff$(vi) because $\tind(F,P_n)=0$ for every digraph
$F$ that is not a poset.
\end{proof}

\begin{definition}
  \label{Dconv}
A sequence $(P_n)$ of finite posets \emph{converges} if
one, and thus all, of the conditions in \refT{Ttau} holds.
\end{definition}

\begin{remark}
  \label{Rsala}
As seen in the proof of \refT{Ttau}, the case when
$|P_n|\not\to\infty$ is not very interesting since then $(P_n)$
converges if and only if the sequence is eventually constant. The
interesting case is thus $|P_n|\to\infty$, and then convergence of
$(P_n)$ is also equivalent to convergence of $\ptau(P_n)$ in $\oip$ or
$\tau(P_n)$ in $\oid$.
\end{remark}

Since $\ptaup$ is injective, 
we can identify $\cP$ with its image $\ptaup(\cP)\subseteq\oipp$
and define $\cpq\subseteq\oipp$ as its closure.
Alternatively, we can consider $\ptauinj$ or $\ptauind$;
we can again identify $\cP$ with its image
and consider its closure $\cpq$ in $\oip$.
It follows from \refT{Ttau} that 
the three compactifications $\overline{\ptaup(\cP)}$, 
$\overline{\ptauinj(\cP)}$, $\overline{\ptauind(\cP)}$ are homeomorphic
and we can use any of them for $\cpq$.
Moreover, we can also, again by \refT{Ttau}, use $\ptau$, $\ptauinj$
or $\ptauind$ and embed $\cP$ in $\oidd$ or $\oid$ and obtain $\cpq$
as a compact subset of $\oidd$ or $\oid$. This is equivalent to
regarding posets as digraphs and using the embeddings
$\cP\subset\cD\subset\cdq$ and defining $\cpq$ as the closure of $\cP$
in $\cdq$. (Thus $\cpq$ can be regarded as a subset of $\cdq$.)
Since all these constructions yield homeomorphic results it does not
matter which one we use.
Note that $\cpq$ is a compact metric space. Different, equivalent,
metrics are given by the embeddings above into $\oipp$, $\oip$,
$\oidd$, $\oid$.

We let $\cpoo\=\cpq\setminus\cP$; 
this is the set of all limit objects of sequences
$(P_n)$ in $\cP$ with $|P_n|\to\infty$; \ie, $\cpoo$ is the set of all
proper poset limits.

For every fixed digraph $F$, the functions $t(F,\cdot)$,
$\tinj(F,\cdot)$ and $\tind(F,\cdot)$ have unique continuous
extensions to $\cdq$, for which we use the same notation. 
In particular, $t(Q,\Pi)$ is defined for every finite poset $Q$ and
poset limit $\Pi\in\cpq$.
We similarly extend $|\cdot|\qw$ continuously
to $\cdq$ by defining $|\gG|=\infty$
and thus $|\gG|\qw=0$ for $\gG\in\cdoo\=\cdq\setminus\cD$.
It is easily seen that 
\begin{equation}
  \label{ttinj}
\tinj(F,\gG)=t(F,\gG)
\end{equation}
for every $F\in\cD$
and $\gG\in\cdoo$ \cite{SJ209}; in particular for $F\in\cP$ and
$\gG\in\cpoo$.
Moreover, for any $Q,P\in\cP$,
$\tinj(Q,P)=\sum_{Q'\supseteq Q}\tind(Q',P)$,
where we sum over all posets $Q'\supseteq Q$ with the same
ground set $Q\gr$, and thus by continuity
\begin{equation}
\label{tinjind}
\tinj(Q,\Pi)=\sum_{Q'\supseteq Q}\tind(Q',\Pi)
\end{equation}
for every $Q\in P$ and $\Pi\in\cpq$.

Thus $\cpoo=\set{\Pi\in\cpq:|\Pi|\qw=0}$, which shows that $\cpoo$ is
a closed and thus compact subset of $\cpq$. Conversely, $\cP$ is an open
subset of $\cpq$; by \refR{Rsala}, it has the discrete topology.
Note further that $\cP$ is countable 
while $\cpq$ and $\cpoo$ are uncountable,
\eg{} by \refE{Ep2}.

We summarize the results above on convergence.

\begin{theorem}\label{T1old}
  A sequence $(P_n)$ of finite posets converges in the sense of \refD{Dconv}
if and only if it converges in the compact metric space
  $\cpq$.
\nopf
\end{theorem}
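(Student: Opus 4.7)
The plan is to read off the equivalence directly from the way $\cpq$ was constructed, together with \refT{Ttau}. Recall that $\cpq$ was defined by identifying each $P\in\cP$ with $\ptaup(P)\in\oipp$ and taking the closure in the compact metric space $\oipp=\oip\times\oi$. In particular, $\cpq$ carries the subspace topology, and the topology of $\oipp$ is the product of the product topology on $\oip$ (pointwise convergence on $\cP$) and the standard topology on $\oi$. Thus $\ptaup(P_n)$ converges in $\cpq$ if and only if $t(Q,P_n)$ converges for every $Q\in\cP$ and $|P_n|\qw$ converges in $\oi$.

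The first key step is to rewrite the second condition: $|P_n|\qw$ converges in $\oi$ if and only if $|P_n|$ converges in $\bbNx=\bbN\cup\set\infty$, under the convention $|P|\qw=0\leftrightarrow|P|=\infty$. The map $n\mapsto n\qw$ is a homeomorphism of $\bbNx$ (with its natural one-point compactification topology) onto the closed subset $\set{n\qw:n\in\bbN}\cup\set0$ of $\oi$, which gives this correspondence. Combining with the previous paragraph, convergence of $\ptaup(P_n)$ in $\cpq$ is equivalent to condition (i) of \refT{Ttau}, and by \refT{Ttau} this is equivalent to any of the other conditions in that theorem, hence to convergence in the sense of \refD{Dconv}.

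The second key step is merely the observation that no additional closure condition need be checked: each $\ptaup(P_n)$ already lies in the closed subset $\cpq\subseteq\oipp$, so if the sequence converges in $\oipp$ its limit automatically belongs to $\cpq$. There is really no obstacle to overcome; the statement is bookkeeping between the two equivalent descriptions of the same topology on $\cP\cup\cpoo$, which is why the theorem is stated essentially for the record.
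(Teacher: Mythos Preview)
Your argument is correct and is exactly the unpacking of the construction that the paper leaves implicit: the paper gives no proof at all (the \verb|\nopf| marker), regarding the statement as immediate from the definition of $\cpq$ via the embedding $\ptaup$ and \refT{Ttau}. Your write-up simply makes explicit the two small observations (convergence in the closed subspace $\cpq$ is the same as convergence in $\oipp$, and $|P_n|\qw\to\ell$ in $\oi$ iff $|P_n|\to\ell\qw$ in $\bbNx$) that justify this.
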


The construction of $\cP$ further immediately implies the
following related characterization of convergence in $\cpoo$.

\begin{theorem}
  \label{T1ox}
A sequence $\Pi_n$ of proper graph limits (\ie, elements
of $\cpoo$)
converges [to a proper graph limit $\Pi$]
if and only if $t(Q,\Pi_n)$ converges [to $t(Q,\Pi)$]
for every finite poset $Q$.

We can here replace $t$ by $\tinj$ or $\tind$;
further, we may let $Q$ range over all finite digraphs instead
of posets. \nopf
\end{theorem}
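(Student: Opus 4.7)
The plan is to argue that this is essentially a routine compactness-plus-uniqueness statement in the compact metric space $\cpq$. The forward direction follows from the continuous extension of $t(Q,\cdot)$ to $\cpq$, and the reverse direction follows because a proper poset limit is separated from every other point of $\cpoo$ by the functionals $\{t(Q,\cdot) : Q \in \cP\}$, a fact already recorded right after the construction of $\cpq$ as the closure $\overline{\ptaup(\cP)}$ in $\oipp$.

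For the forward direction, if $\Pi_n \to \Pi$ in $\cpoo \subseteq \cpq$, then continuity of each $t(Q,\cdot)$ on $\cpq$ yields $t(Q,\Pi_n) \to t(Q,\Pi)$ for every $Q \in \cP$. For the reverse direction, assume $t(Q,\Pi_n)$ converges for every $Q \in \cP$ and set $a(Q) \= \lim_n t(Q,\Pi_n)$. Since $\cpq$ is a compact metric space and $\cpoo$ is closed in $\cpq$ (as the zero set of the continuous function $|\cdot|^{-1}$, which vanishes precisely on $\cpoo$), every subsequence of $(\Pi_n)$ has a further subsequence converging to some $\Pi \in \cpoo$. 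Along any such subsequence, continuity forces $t(Q,\Pi) = a(Q)$ for every $Q \in \cP$, and since a proper poset limit is uniquely determined by the values $(t(Q,\cdot))_{Q\in\cP}$, the limit $\Pi$ does not depend on the subsequence chosen. The standard metric-space fact that a sequence converges to $\Pi$ whenever every subsequence has a further subsequence converging to $\Pi$ then gives $\Pi_n \to \Pi$, with $t(Q,\Pi) = a(Q)$.

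For the variants: on $\cpoo$ the identity $\tinj(Q,\cdot) = t(Q,\cdot)$ from \eqref{ttinj} gives the $\tinj$ version at once, and the $\tind$ version follows because the families $\{\tinj(Q,\cdot)\}_{Q\in\cP}$ and $\{\tind(Q,\cdot)\}_{Q\in\cP}$ determine each other by finite summation and inclusion-exclusion (compare \eqref{tinjind} and the corresponding step in the proof of \refT{Ttau}). To allow $Q$ to range over all finite digraphs: if $F \in \cD \setminus \cP$, then by \refL{L1} no injective homomorphism $F \to P$ and no induced copy of $F$ can exist in any finite poset $P$, so $\tinj(F,P) = \tind(F,P) = 0$, which extends by continuity to $\tinj(F,\Pi) = \tind(F,\Pi) = 0$ for every $\Pi \in \cpoo$. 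Such $F$ therefore carry no information, and the digraph versions reduce to the poset versions, with $t(F,\cdot)$ on $\cpoo$ handled through \eqref{ttinj}.

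No substantive obstacle arises here; all the real work (continuous extension of $t(Q,\cdot)$ to $\cpq$, injectivity of $\ptaup$ on $\cpoo$, and closedness of $\cpoo$) has already been carried out in the construction of $\cpq$. The only trivial point worth checking is that subsequential limits of $(\Pi_n)$ stay inside $\cpoo$ rather than escaping into $\cP$, and this is immediate from $|\Pi_n|^{-1} = 0$ combined with the continuity of $|\cdot|^{-1}$ on $\cpq$.
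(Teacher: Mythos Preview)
Your argument is essentially correct and matches the paper's view: the paper gives no proof at all (the theorem carries the \verb|\nopf| marker), treating the result as immediate from the construction of $\cpq$ as the closure of $\ptaup(\cP)$ in $\oipp$ (equivalently, of $\tauind(\cP)$ in $\oid$, etc.). Your compactness-plus-uniqueness unpacking is exactly the intended content.

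There is, however, one false step in your treatment of the digraph variants. You claim that for $F\in\cD\setminus\cP$ there is no injective homomorphism $F\to P$ into a finite poset $P$, and hence $\tinj(F,P)=0$. This is wrong: take $F=\sP_2$, the directed path on three vertices, which is not a poset (it fails transitivity), yet any chain $a<b<c$ in $P$ gives an injective homomorphism $\sP_2\to P$, so $\tinj(\sP_2,P)>0$ whenever $P$ contains a $3$-chain. Your claim \emph{is} correct for $\tind$, since an induced subgraph of a poset is a poset, and that suffices for the $\tind$ version. For the $\tinj$ and $t$ versions with digraph test objects, the right observation is simpler than the one you attempted: since $\cP\subset\cD$, convergence of $\tinj(F,\Pi_n)$ for all digraphs $F$ in particular gives convergence for all posets $Q$, and on $\cpoo$ we have $\tinj(Q,\cdot)=t(Q,\cdot)$ by \eqref{ttinj}, so the reverse implication reduces to the poset case already handled; the forward implication is continuity. (Alternatively, $\tinj(F,\Pi)=\sum_{F'\supseteq F}\tind(F',\Pi)$ on $\cpoo$, and only poset $F'$ contribute, so each $\tinj(F,\cdot)$ is a finite linear combination of $\tind(Q,\cdot)$ with $Q\in\cP$.)
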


\section{Exchangeable random infinite posets}\label{Sexch}

It is straightforward to verify that Sections 3--5 of \citet{SJ209}
hold with only notational changes for the poset case as well as for
the graph case treated there. Rather than repeating the details, we
therefore omit them and refer to \cite{SJ209}, 
giving only a few comments.
We first
obtain the following basic result on convergence in distribution of
\emph{random} unlabelled posets, corresponding to \cite[Theorem 3.1]{SJ209}.

\begin{theorem}
  \label{T2}
Let $P_n$, $n\ge1$, be random unlabelled posets and assume that
$|P_n|\pto\infty$.
The following are equivalent, as \ntoo.
\begin{romenumerate}
  \item\label{T2a}
$P_n\dto \Pi$ for some random $\Pi\in\cpq$.
  \item\label{T2b}
For every finite family\/ $Q_1,\dots,Q_m$ of (non-random) finite posets, the
random variables $t(Q_1,P_n),\dots,t(Q_m,P_n)$ converge jointly in
distribution. 
  \item\label{T2c}
For every (non-random) $Q\in\cP$, the
random variables $t(Q,P_n)$ converge in
distribution.
  \item\label{T2d}
For every (non-random) $Q\in\cP$, the
expectations $\E t(Q,P_n)$ converge.
\end{romenumerate}
If these properties hold, then the limits in \ref{T2b},
\ref{T2c} and \ref{T2d} are $\bigpar{t(Q_i,\Pi)}_{i=1}^m$, $t(Q,\Pi)$ and
$\E t(Q,\Pi)$, respectively; conversely, if 
 \ref{T2b}, \ref{T2c} or \ref{T2d} holds with these limits for some
 random $\Pi\in\cpq$, then \ref{T2a} holds with the same $\Pi$.
Furthermore, $\Pi\in\cpoo$ a.s.

The same results hold if $t$ is replaced by $\tinj$ or $\tind$.
\nopf
\end{theorem}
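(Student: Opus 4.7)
The plan is to adapt the proof of \cite[Theorem~3.1]{SJ209}, which establishes the analogous statement for random graph limits; the only substantive change is to work in $\cpq$ in place of $\cdq$. The implications (i)$\Rightarrow$(ii)$\Rightarrow$(iii)$\Rightarrow$(iv) are routine. For (i)$\Rightarrow$(ii), apply the continuous mapping theorem to the continuous map $\cpq\to\oi^m$ given by $\Pi\mapsto(t(Q_1,\Pi),\dots,t(Q_m,\Pi))$; (ii)$\Rightarrow$(iii) is trivial; and (iii)$\Rightarrow$(iv) follows from bounded convergence, since the random variables $t(Q,P_n)$ lie in $\oi$. In each case the stated identifications of the limits are immediate from the same arguments.

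The main implication is (iv)$\Rightarrow$(i). Because $\cpq$ is a compact metric space, the sequence of laws of $(P_n)$ is automatically tight, so every subsequence has a further subsequence along which $P_{n_k}\dto\Pi$ for some random $\Pi\in\cpq$; along any such subsequence bounded convergence yields $\E t(Q,P_{n_k})\to\E t(Q,\Pi)$, and (iv) forces this limit to be the same for every subsequential limit. What remains is to show that the values $\set{\E t(Q,\Pi):Q\in\cP}$ determine the law of $\Pi$ on $\cpq$. For this I would use the product formula
\begin{equation*}
t(Q_1,P)\,t(Q_2,P)=t(Q_1\cup Q_2,P),\qquad P\in\cP,
\end{equation*}
where $Q_1\cup Q_2$ is the disjoint union poset with no new comparabilities (as defined in \refT{TE2}); this is immediate from \eqref{t} by independence of uniform maps on disjoint domains, and extends to all of $\cpq$ by continuity. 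Iterating, every polynomial in $\set{t(Q,\cdot)}_{Q\in\cP}$ is a finite linear combination of single $t(Q,\cdot)$'s. Since this family separates points of $\cpq$ by \refT{Ttau}, Stone--Weierstrass on the compact metric space $\cpq$ gives that these polynomials are dense in $C(\cpq)$. Hence (iv) determines $\E f(\Pi)$ for every $f\in C(\cpq)$, the law of $\Pi$ is uniquely determined, and a standard subsequence argument yields $P_n\dto\Pi$. The converse assertion after (i)--(iv) is the same uniqueness packaged differently.

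Finally, $|\cdot|\qw$ extends continuously to $\cpq$ (vanishing precisely on $\cpoo$) and $|P_n|\qw\pto 0$ by assumption, so the continuous mapping theorem gives $|\Pi|\qw=0$ a.s., \ie, $\Pi\in\cpoo$ a.s. The versions with $\tinj$ and $\tind$ then follow from \eqref{ttinj}, which gives $\tinj=t$ on $\cpoo$, together with the finite inclusion--exclusion relation \eqref{tinjind} between $\tinj$ and $\tind$ (and its analogue on $\cP$); each equivalence transfers by these linear transformations. The main obstacle in the whole argument is the uniqueness in distribution of the subsequential limit in (iv)$\Rightarrow$(i), which is handled by combining the product formula (to collapse joint moments into single expectations) with \refT{Ttau} (which supplies the Stone--Weierstrass separation hypothesis).
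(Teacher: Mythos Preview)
Your approach is exactly the one the paper intends: it gives no proof and simply refers to \cite[Theorem~3.1]{SJ209}, which is the graph version, and your sketch reproduces that argument in the poset setting. The product formula $t(Q_1,\cdot)t(Q_2,\cdot)=t(Q_1\cup Q_2,\cdot)$ together with Stone--Weierstrass is indeed how one passes from \ref{T2d} to \ref{T2a}.

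There is, however, one genuine slip in your (iv)$\Rightarrow$(i) step. You assert that the family $\{t(Q,\cdot):Q\in\cP\}$ separates points of $\cpq$ by \refT{Ttau}. This is false: the paper explicitly notes that $\ptau$ is \emph{not} injective on $\cP$ (the posets $\setoi\times\nn$ for different $n$ all have the same image under $\ptau$), and this is precisely why one needs $\ptaup$, which adjoins $|\cdot|\qw$, to get an embedding of $\cpq$. What \emph{is} true is that $\{t(Q,\cdot)\}$ separates points of the smaller space $\cpoo$, since a proper poset limit is determined by its values $t(Q,\Pi)$.

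The fix is simply to reorder your argument: for \emph{each} subsequential limit $\Pi$, the continuous mapping theorem applied to $|\cdot|\qw$ and the hypothesis $|P_n|\qw\pto0$ already give $\Pi\in\cpoo$ a.s. Having established this first, you may then run Stone--Weierstrass on the compact metric space $\cpoo$ (rather than $\cpq$), where the separation hypothesis does hold, and conclude that the law of $\Pi$ on $\cpoo$ is determined by $\{\E t(Q,\Pi)\}$. The rest of your argument then goes through unchanged. You in fact prove $\Pi\in\cpoo$ a.s.\ at the end; you just need to move it earlier and invoke it for every subsequential limit before the uniqueness step.
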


Using this we then obtain \refT{TC1}, which corresponds to
\cite[Theorems 4.1 and 5.2]{SJ209};
note that \eqref{cea} is the poset version of a formula in
\cite[Theorem 4.1]{SJ209}, which follows because, if $n$ is so large
that $A\subseteq\nn$, $\P(\hpn|_A=Q)=\tind(Q,P_n)\to\tind(Q,\Pi)$, 
and that
\eqref{ce} easily follows from \eqref{cea}  by summing over
$Q'\supseteq Q$ on the same ground set.
We really cannot prove 
the equality $R\eqd\poopi$ 
yet, since we have defined $\poopi$ using kernels and \refT{T1}, which
is not yet proven. Instead, we note only that $R\eqd\poopi$ will
follow by \eqref{ce} and \eqref{tqpi} 
or \eqref{cea} and \eqref{tqpind} once we have proven \refT{T1} and thus
verified \eqref{tqpi} and \eqref{tqpind} in \refS{Skernels}.
(Alternatively, we could have used \eqref{tqpi} and \eqref{tqpind} as
a definition of $\poopi$.) 

\begin{remark}
Actually,
\cite[Theorems 4.1]{SJ209} is stated more generally for sequences of
random graphs, and similarly
\refT{TC1} extends to the case of random finite posets $P_n$ with
$|P_n|\pto\infty$; then the limit $\Pi\in\cpq$ is in general random
too, and 
(i) becomes $P_n\dto\Pi$ while \eqref{ce} and \eqref{cea} have to be
replaced by \eqref{te} and \eqref{tea}.
\end{remark}

We then obtain \refT{TE}, which corresponds to 
\cite[Theorems 5.3 and Corollary 5.4]{SJ209}, and 
\refT{TE2}, which corresponds to 
\cite[Theorems 5.5]{SJ209}.
The \as{} convergence of $R\rest n=\poopi\rest n$ in
\refT{TE}\ref{tex} follows, as in \cite[Remark 5.1]{SJ209}, 
because $\tinj(Q,R\rest n)$,
$n\ge|Q|$, is a reverse martingale for every $Q\in\cP$.

\section{Kernels}\label{Skernels}

\begin{proof}[Proof of \refT{T1}]

First, let $W$ be a kernel on an \ops. Then $R=\poow$ defined in
\refD{Dpnw} is an \erip, which satisfies the independence condition
\refT{TE2}\ref{te2b'}; hence, by \refT{TE2}\ref{te2a} 
its distribution is an extreme point in the set of
\exch{} distributions, and by \refT{TE}\ref{tex}
there exists a poset limit $\Pi$ (which we
call $\piw$) such that \eqref{ce} and \eqref{cea} hold, and
$\pnw=R\rest n\asto\Pi=\piw$ in $\cpq$. This proves \ref{T1a}.
Further, it follows directly from the definition of $\poow$ that if
$Q$ is a finite labelled poset, then
\begin{equation*}
\P\bigpar{\poow\supset Q} =
\int_{\csq} \prod_{ij:i<_Q j} W(x_i,x_j) \dd \mu(x_1)\dots \dd\mu(x_\xQ),
\end{equation*}
and thus \ref{T1b} follows by \eqref{ce}.

For the converse, suppose that $\Pi\in\cpoo$, and consider the
corresponding \erip{} $R$ given by \refT{TE}. (I.e., $\poopi$,
although we have not yet shown this, so we have to use
only \eqref{ce} and \eqref{cea} until \refT{T1} is proven.)
Let $\iij\=\ett{i<_R j}$, $i,j\in\oo$. Then $(\iij)$ is a 
jointly \exch{} random arrays of zero--one variables, with the
diagonal entries $I_{ii}=0$. For such \exch{} random arrays,
the Aldous--Hoover representation theorem takes the form, see 
\citet[Theorem 7.22]{Kallenberg:exch},
  \begin{align}\label{f4}
I_{ij}&=f(\xio,\xii,\xij,\xiij), \qquad i\neq j,	
  \end{align}
where $f:\oi^4\to\setoi$ is a (Borel) measurable
function, $\xi_{ji}=\xiij$, and 
$\xio$, $\xii$ ($1\le i$) and $\xiij$ ($1\le i<j$) are independent
random variables uniformly distributed on $\oi$. 
By \refT{TE}\ref{tex}, the distribution of the array $({I_{ij}})$ is an extreme
point in the set of \exch{} distributions, and thus  by \refT{TE2} and
\cite[Lemma 7.35]{Kallenberg:exch}, there exists such a representation
where $f$ does not depend on $\xio$, so \eqref{f4} becomes
$I_{ij}=f(\xii,\xij,\xiij)$, $i\neq j$.	
We then further define 
\begin{equation}\label{w0}
  W_0(x,y)\=\P\bigpar{f(x,y,\xi)=1}
=\E f(x,y,\xi),
\end{equation}
where $\xi\sim\uoi$. 
(In general, the variable $\xio$ can be interpreted as making $W$
random; this is needed if we consider a random $\Pi$ as in
\refT{TE}\ref{terandom}, but not in the present case.)

As our \ops{} we take \oilm, with an order to be defined later.
The function $W_0$ is almost the sought kernel, but not quite. The
problem is that the function $f$, and thus $W_0$, can be arbitrarily
changed on a null set without affecting the distribution of $(\iij)$;
consequently we can only show properties such as \eqref{w2} a.e. for $W_0$.
We thus have to make a suitable choice of $W$ among all functions that
are \aex{} equal to $W_0$. 

Recall that a point $(x,y)$ is a \emph{\lp} of an integrable function
$F$ on $\bbR^2$
if
\begin{equation}
  \label{leb}
(2\eps)^{-2}\hskip-1em\iint\limits_{|x'-x|<\eps,\,|y'-y|<\eps}
|F(x',y')-F(x,y)|\dd x'\dd y' \to0
\qquad \text{as }\eps\to0,
\end{equation}
and that \aex{} point is a \lp{} of $F$, see
\eg{} \citet[\S1.8]{Stein}. 
This applies trivially to functions defined on $\oiqq$ too, by
extending the functions to $\bbR^2$ by defining
them as $0$ outside $\oiqq$.
We modify the function $W_0$ in two steps. We first define
\begin{equation}  \label{limw1}
W_1(x,y)\=\liminf_{\eps\to0} 
(2\eps)^{-2}\hskip-1em\iint\limits_{|x'-x|<\eps,\,|y'-y|<\eps}
W_0(x',y')\dd x'\dd y',
\end{equation}
and note that $W_1=W_0$ at every \lp{} of $W_0$ and thus a.e.
Next, we let $E$ be the set of all Lebesgue points of $W_1$ in $\oiq^2$
and define $W(x,y)\=W_1(x,y)\ett{(x,y)\in E}$. Then 
$0\le W(x,y)\le 1$ and
$W=W_1=W_0$ a.e.
Moreover, if $W(x,y)>0$, then $W_1(x,y)=W(x,y)$,
$(x,y)\in\oiqq$ and $(x,y)$ is a \lp{}
of $W_1$; hence, using $W_1(x,y)=W(x,y)$ and $W_1=W$ \aex,
$(x,y)$ is a \lp{} of $W$. Finally, if $(x,y)\in\oiqq$ and 
\begin{equation}
  \label{lim1}
(2\eps)^{-2}\hskip-1em\iint\limits_{|x'-x|<\eps,\,|y'-y|<\eps}
W(x',y')\dd x'\dd y'
\to1
\end{equation}
as $\eps\to0$, then $W_1(x,y)=1$ by \eqref{limw1}; thus, 
using $W_1\le 1$, 
\eqref{lim1}  implies that
$(x,y)$ is a Lebesgue point of $W_1$, and hence $(x,y)\in E$ and
$W(x,y)=W_1(x,y)=1$.  

After these preliminaries, note that $I_{12}=I_{23}=1$ implies
$I_{13}=1$ since $R$ is a poset. Hence, using \eqref{f4} and
\eqref{w0},
and the independence of \set{\xi_i,\xi_{jk}},
\begin{equation*}
  \begin{split}
0&=
  \P(I_{12}=I_{23}=1,\,I_{13}=0)
\\&
=
\E \bigpar{ 
f(\xi_1,\xi_2,\xi_{12})f(\xi_2,\xi_3,\xi_{23})(1-f(\xi_1,\xi_3,\xi_{13}))}
\\
&=
\E \bigpar{W_0(\xi_1,\xi_2)W_0(\xi_2,\xi_3)(1-W_0(\xi_1,\xi_3))}	
\\
&=
\E \bigpar{W(\xi_1,\xi_2)W(\xi_2,\xi_3)(1-W(\xi_1,\xi_3))};
  \end{split}
\end{equation*}
thus 
\begin{equation}\label{www}
W(x_1,x_2)W(x_2,x_3)(1-W(x_1,x_3))=0 \quad \text{a.e.}
\end{equation}

Similarly, since $R$ does not contain a directed cycle $1<_R2<_R3<_R1$,
$P(I_{12}=I_{23}=I_{31}=1)=0$ and 
\begin{equation}\label{wc3}
W(x_1,x_2)W(x_2,x_3)W(x_3,x_1)=0
\quad \text{a.e.}
\end{equation}

Now assume that $x$, $y$ and $z$ are such that
$W(x,y)>0$ and $W(y,z)>0$. Let $\eps>0$ and let
$\xex$ be a random, uniformly
distributed, point in $(x-\eps,x+\eps)$ and let similarly $\xey$ and
$\xez$ be random points in $(y-\eps,y+\eps)$ and $(z-\eps,z+\eps)$;
these three variables being independent.
Since $W(x,y)>0$, $(x,y)$ is a \lp{} of $W$, and thus \eqref{leb} shows
that $\E|W(\xex,\xey)-W(x,y)|\to0$ as $\eps\to0$. In particular, using
Markov's inequality, $\P(W(\xex,\xey)=0)\to0$ as $\eps\to0$.
Similarly, $\P(W(\xey,\xez)=0)\to0$ as $\eps\to0$.
On the other hand, \eqref{www} implies
$W(\xex,\xey)W(\xey,\xez)(1-W(\xex,\xez))=0$ a.s., and thus
\begin{equation*}
  \P\bigpar{W(\xex,\xez)<1}
\le
\P(W(\xex,\xey)=0) +\P(W(\xey,\xez)=0)\to0,
\end{equation*}
as $\eps\to0$. It follows that \eqref{lim1} holds at $(x,z)$, and
thus, by the remarks above, $W(x,z)=1$. Consequently, 
\begin{align}
\label{w2r}
W(x,y)>0 \text { and } W(y,z)>0 &\implies W(x,z)=1,
\end{align}
which is \eqref{w2}.

Similarly, still assuming $W(x,y)>0$ and $W(y,z)>0$,
\eqref{wc3} implies
$W(\xex,\xey)W(\xey,\xez)W(\xez,\xex)=0$ a.s., and thus
\begin{equation}\label{ww}
  \P\bigpar{W(\xez,\xex)>0}
\le
\P(W(\xex,\xey)=0) +\P(W(\xey,\xez)=0)\to0,
\end{equation}
as $\eps\to0$. If further $W(z,x)>0$, then $(z,x)$ is a \lp{}
of $W$ and $\P(W(\xez,\xex)=0)\to0$ as $\eps\to0$, which contradicts
\eqref{ww}. Consequently, 
\begin{align}
\label{w3}
W(x,y)>0 \text { and } W(y,z)>0 &\implies W(z,x)=0.
\end{align}

Now suppose that $W(x,x)>0$ for some $x$. Taking $y=z=x$ in  
\eqref{w3} we find $W(x,x)=0$, a contradiction.
Hence, $W(x,x)=0$ for every $x$.
Further, if both $W(x,y)>0$ and $W(y,x)>0$ for some $x$ and $y$, then
\eqref{w2r} yields $W(x,x)=1$, which was just shown to be impossible.
Hence $W(x,y)W(y,x)=0$ for all $x$ and $y$. 
These properties and \eqref{w2r} show that we may define a partial
order $\prec$ on $\sss=\oi$  by $x\prec y$ if $W(x,y)>0$, and then 
$W$ is a (strict) kernel on the \ops{} $\oibglx$, where $\gl$ is the
\lm. 
(We took $f$ Borel measurable, and then $W_0$, $W_1$, $E$ and
$W$ are Borel measurable too.)

Finally, it follows from \eqref{f4} and \eqref{w0} that for every
finite poset $Q$ with ground set $A\subset\bbN$,
\begin{equation}\label{wp}
  \begin{split}
  \P\xpar{Q\subset R}
&=
\P\biggpar{ \prod_{ij:i<_Q j} \iij=1}
=
\E \prod_{ij:i<_Q j} \iij
=
\E \prod_{ij:i<_Q j} f(\xii,\xij,\xiij)
\\&
=
\E \prod_{ij:i<_Q j} W_0(\xii,\xij)
=
\E \prod_{ij:i<_Q j} W(\xii,\xij)
\\&=
\int_{\csq} \prod_{ij:i<_Q j} W(x_i,x_j) \dd \mu(x_1)\dots
\dd\mu(x_\xQ).
  \end{split}
\raisetag{1.5\baselineskip}
\end{equation}
Hence, by \eqref{ce} and \eqref{t1b},
$t(Q,\Pi)=\P(Q\subset R) = t(Q,\piw)$ for all such posets
$Q$, and thus $\piw=\Pi$.
\end{proof}
\begin{remark}
Remember that we have $\xiij=\xiji$, which in principle may give a
dependence between $ij$ and $ji$ terms. This is an important
complication in other situations, for example for digraphs
\cite{SJ209}, but is of no concern for posets, where at most one of
$W(\xii,\xij)$ and $W(\xij,\xii)$ is non-zero, and similarly, in
\eqref{wp}, at most one of $i<_Qj$ and $j<_Qi$ holds.  
\end{remark}

As remarked above, it now follows that $R\eqd\poopi$ in Theorems
\refand{TE}{TC1}, for example by 
\eqref{tqpind} and \eqref{cea}, or directly by \eqref{wp}.

\begin{remark}
  Alternatively, 
we can regard $R$ as an exchangeable random infinite digraph,
and use the representation by a quintuple of functions
$\bW=(W_{00},W_{01},W_{10},W_{11},\wo)$ 
as in \citet[Theorem 9.1]{SJ209}, see \refS{SD}; here
$\wab:\oi^ 2\to\oi$ and
$w:\oi\to\oi$. The function $w$ generates loops
and $W_{11}$ generates doubly directed edges (\ie,
cycles $\sC_2$); hence $w=0$ and $W_{11}=0$
in the poset case. Further, $W_{01}(x,y)=W_{10}(y,x)$ and
$\sum_{\ga,\gb=0}^1\wab(x,y)=1$, so the quintuple $\bW$ is
determined by $W_{10}$. We then can replace
\eqref{w0} by $W_0\=W_{10}$, and complete the
proof by adjusting $W_0$ on a null set as above.
\end{remark}

\begin{lemma}
  \label{Lg}
Let $\sfmuux$ be an \ops, and let
$g(x)\=\mu\set{z\in\sss:z\prec
x}$. Then, the set 
\set{(x,y)\in\sss^ 2:x\prec y\text{ and } g(x)\ge g(y)} is a null set in
$\sss^ 2$.
\end{lemma}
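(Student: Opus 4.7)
The plan is to reduce to showing $A_0 := \{(x,y) : x \prec y,\ g(x) = g(y)\}$ has $(\mu \times \mu)$-measure zero. By transitivity, $x \prec y$ implies $B(x) := \{z : z \prec x\} \subseteq B(y)$, so $g(x) \le g(y)$, which justifies the reduction. The key preliminary observation is that for $(x,y) \in A_0$, the inclusion $B(x) \subseteq B(y)$ together with $\mu(B(x)) = \mu(B(y))$ forces $\mu(B(y) \setminus B(x)) = 0$; in particular the open interval $(x,y)_\prec := \{z : x \prec z \prec y\} \subseteq B(y) \setminus B(x)$ is $\mu$-null.

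I then introduce two measurable auxiliary functions that localize $A_0$. Writing $E_t := g^{-1}\{t\}$ and $F(x) := \{z : x \prec z\}$, set $\alpha(y) := \mu(B(y) \cap E_{g(y)})$ and $\beta(x) := \mu(F(x) \cap E_{g(x)})$. By Fubini, $(\mu \times \mu)(A_0) = \int \alpha\,\dd\mu = \int \beta\,\dd\mu$. Two facts drive the argument: (i) on $A_0$ one has $\alpha(x) = \alpha(y)$, since $B(x) = B(y)$ modulo a $\mu$-null set and $g(x) = g(y)$ together yield $B(x) \cap E_{g(x)} = B(y) \cap E_{g(y)}$ modulo null; (ii) $\int \alpha\beta\,\dd\mu = 0$. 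For (ii) I expand the product via Fubini,
\begin{align*}
\int_\sss \alpha(z)\beta(z)\,\dd\mu(z)
&= \iiint \ett{x \prec z,\ g(x)=g(z)}\,\ett{z \prec y,\ g(z)=g(y)}\,\dd\mu(x)\,\dd\mu(y)\,\dd\mu(z) \\
&= \iiint \ett{x \prec z \prec y,\ g(x) = g(y)}\,\dd\mu(x)\,\dd\mu(y)\,\dd\mu(z) \\
&= \iint_{A_0} \mu\bigl((x,y)_\prec\bigr)\,\dd\mu(x)\,\dd\mu(y) = 0,
\end{align*}
where the second equality uses that monotonicity of $g$ along the chain $x \prec z \prec y$ forces $g(x) = g(z) = g(y)$ once $g(x) = g(y)$, and the third uses the preliminary null-interval observation.

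Finally I close the argument. Let $U := \{\alpha > 0\}$ and $V := \{\beta > 0\}$; by (ii), $\mu(U \cap V) = 0$. Fubini gives $(\mu \times \mu)(A_0 \cap (\sss \times U^c)) = \int_{U^c} \alpha\,\dd\mu = 0$, so $A_0 \subseteq \sss \times U$ modulo null; by (i), if $y \in U$ then $\alpha(x) = \alpha(y) > 0$, so $x \in U$, upgrading to $A_0 \subseteq U \times U$ modulo null. A symmetric argument using $\beta$ yields $A_0 \subseteq V \times \sss$ modulo null. Intersecting, $A_0$ lies modulo null in $(U \cap V) \times U$, a set of product measure $\mu(U \cap V)\mu(U) = 0$. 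The main obstacle is identifying the pair $(\alpha,\beta)$ for which transitivity forces $\alpha\beta \equiv 0$ a.e.\ through the chain identity in (ii); once (i) and (ii) are in hand the rest is Fubini bookkeeping.
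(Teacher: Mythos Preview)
Your proof is correct, but it takes a different and somewhat longer route than the paper's. In your notation, the paper shows directly that $\alpha(y)=0$ for \emph{every} $y$, by the following antisymmetry trick: for $z,z'\in B(y)\cap E_{g(y)}$, at most one of $z\prec z'$ and $z'\prec z$ holds, so
\[
\alpha(y)^2
\le 2\int_{B(y)\cap E_{g(y)}}\int_{B(y)\cap E_{g(y)}}\ett{z'\notin B(z)}\,\dd\mu(z')\,\dd\mu(z)
= 2\int_{B(y)\cap E_{g(y)}}\mu\bigl((B(y)\cap E_{g(y)})\setminus B(z)\bigr)\,\dd\mu(z)=0,
\]
the inner integrand vanishing because your preliminary observation gives $\mu(B(y)\setminus B(z))=0$ whenever $z\in B(y)\cap E_{g(y)}$. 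Then $(\mu\times\mu)(A_0)=\int\alpha\,\dd\mu=0$ immediately.

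Your argument instead introduces the dual function $\beta$, proves $\int\alpha\beta=0$ via the chain identity (which leans on transitivity rather than antisymmetry), and combines this with the invariance $\alpha(x)=\alpha(y)$ on $A_0$ to squeeze $A_0$ into a null rectangle. Both approaches rest on the same ``null interval'' observation $\mu\bigl(B(y)\setminus B(x)\bigr)=0$ for $(x,y)\in A_0$; the paper exploits it pointwise in $y$, while you exploit it after integrating over a third variable. The paper's route is shorter and gives the stronger pointwise conclusion $\alpha\equiv 0$, but your $\alpha\beta$ device is a nice alternative.
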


\begin{proof}
  Let, for $x\in\sss$, 
$D_x\=\set{z:z\prec x}$ 
and $E_x\=\set{z:z\prec x \text{ and }g(z)\ge g(x)}$.
%
If $z\in E_x$, then $z\prec x$ and thus $D_z\subseteq D_x$, and
$\mu(D_z)=g(z)\ge g(x)=\mu(D_x)$; hence $g(z)=g(x)$ and
$\mu(D_x\setminus D_z)=0$. In particular, then
$\mu(E_x\setminus D_z)=0$, because $E_x\subseteq D_x$.

For two points $y,z\in
E_x$, at least one of $y\notin D_z$  and $z\notin D_y$ holds, and thus
by symmetry
\begin{equation*}
 \mu(E_x)^ 2 
\le 2 \int_{ E_x}\int_{E_x} \ett{y\notin
  D_z}\dd\mu(y)\dd\mu(z)
= 2\int_{E_x} \mu(E_x\setminus D_z)\dd\mu(z)=0.
\end{equation*}
Hence, $\mu(E_x)=0$ for every $x$, and thus
\begin{equation*}
\mu\times \mu\set{(z,y)\in\sss^ 2:z\prec y\text{ and } g(z)\ge g(y)} 
=\int_{\sss} \mu(E_y)\dd\mu(y)=0.
\qedhere
\end{equation*}
\end{proof}

\begin{proof}[Proof of \refT{T1+}]
  The proof of \refT{T1} above gives a kernel satisfying \ref{t1+s} and
  \ref{t1+oi}.

For \ref{t1+oi2}, we start with a kernel $W_1$ on an
\ops{} $\oibglx$ as in \ref{t1+oi}; thus $\prec$ is some
partial order on $\oi$, in general different from the
standard order $<$.
Define $g(x)\=\gl\set{z\in\oi:z\prec x}$. 
Then $x\preceq y\implies g(x)\le g(y)$.
Moreover, by \refL{Lg}, for \aex{} $(x,y)$,
$x\prec y\implies g(x)<g(y)$.

Let $W_2(x,y)\=W_1(x,y)\ett{g(x)<g(y)}$; this too is a kernel on $\oibglx$. 
Since
$W_1(x,y)>0\implies x\prec y\implies g(x)<g(y)$ for \aex{} $(x,y)$
 by \refL{Lg}, we
have  $W_2=W_1$ a.e., and thus $\Pi_{W_2}=\Pi_{W_1}=\Pi$.
Moreover, 
\begin{equation}
  \label{ww2}
W_2(x,y)>0\implies g(x)<g(y).
\end{equation}

Let $U_1,U_2\sim \uoi$ be independent uniform random
variables.
Let $\xi\=g(U_1)$, and let 
$h:\oi\to\oi$ be the right-continuous inverse of its distribution function
$s\mapsto \P(g(U_1)\le s)$;
then $h$ is a non-decreasing function such that $h(U_1)\eqd
g(U_1)=\xi$.

By the transfer theorem \cite[Theorem 6.10]{Kallenberg} with
$\xi\=g(U_1)$, $\eta\=U_1$,
$\txi\=h(U_1)\eqd\xi$, there exists a measurable
function $f:\oi^ 2\to\oi$ such that if
$\teta\=f(\txi,U_2)$, then 
$(\txi,\teta)\eqd(\xi,\eta)=(g(U_1),U_1)$.
This implies $\txi-g(\teta)\eqd\xi-g(\eta)=0$
and thus
$\txi=g(\teta)$ a.s., \ie{}
\begin{equation*}
  h(U_1)=g(\teta)=g(f(\txi,U_2))=g(f(h(U_1),U_2))
\qquad \text{a.s.};
\end{equation*}
hence,
\begin{equation}\label{hgf}
  h(x_1)=g(f(h(x_1),x_2))
\qquad\text{\aex{} on $\oi^ 2$.}
\end{equation}

Let $\sss\=\oi^ 2$ with Lebesgue measure, and define the functions
$W_3,W_4:\sss^ 2\to\oi$ by
\begin{align}
  \label{ww3}
W_3\bigpar{(x_1,x_2),(y_1,y_2)}
&\=
W_2\bigpar{f(h(x_1),x_2),f(h(y_1),y_2)}
\intertext{and}
W_4\bigpar{(x_1,x_2),(y_1,y_2)}
&\=
W_3\bigpar{(x_1,x_2),(y_1,y_2)}\ett{x_1<y_1}.
\end{align}
Then $W_4$ is a kernel on $\sfmuux$.
Further, if $W_3\bigpar{(x_1,x_2),(y_1,y_2)}>0$, then
\eqref{ww3} and \eqref{ww2} yield
$g(f(h(x_1),x_2))<g(f(h(y_1),y_2))$, which by
\eqref{hgf} implies, except on a null set in
$\sss^2=\oi^4$, that
$h(x_1)<h(y_1)$ and thus, since $h$ is non-decreasing,
$x_1<y_1$. Consequently, $W_3=W_4$ \aex{} on
$\sss^2=\oi^4$.

Since $f(h(U_1),U_2)=f(\txi,U_2)=\teta\eqd\eta=U_1$
is uniformly distributed on \oi, it follows from the construction
of $\pnw$ that $P(n,W_4)\eqd P(n,W_3)\eqd P(n,W_2)$ for every
$n\le\infty$. Thus by \refT{T1}(i) or
\ref{TE}\ref{tex},
$\Pi_{W_4}=\Pi_{W_2}=\Pi$
and $W_4$ is a kernel on $\sfmuux$ that
represents $\Pi$.
\end{proof}

\begin{proof}[Proof of \refT{Tpoopi}]
  Let $W$ be a kernel with $\piw=\Pi$. By
  \eqref{tnw}, \eqref{t1b} and \eqref{ttinj}, for every finite
  poset $Q$ and $n\ge|Q|$,
  \begin{equation}
\label{ml1}
\E\tinj(Q,\pnw))=t(Q,\piw)=t(Q,\Pi)=\tinj(Q,\Pi).	
  \end{equation}
By \eqref{tinjind}, it follows that for any finite $n$
and labelled poset $Q$ on $\nn$,
  \begin{equation}
\label{ml2}
\P(Q=\pnw)=
\E\tind(Q,\pnw))=\tind(Q,\Pi).	
  \end{equation}
Hence, the distribution of $\pnw$ is determined by
$\Pi$ for finite $n$, and does not depend on the choice
of $W$. Further, the distribution of $\poow$ is
determined by the distribution of $\pnw=\poow\rest n$,
$1\le n<\infty$, so this distribution too is determined by $\Pi$.
\end{proof}

Moreover, \eqref{tqpin}--\eqref{tqpind} follow from \eqref{ml1}--\eqref{ml2}.

\section{Cut norm and metric}\label{Scut}

In this section  it will be convenient to (usually) ignore orders and
study general \ps{s}.

Let $(\sss,\mu)$ be a probability space.
We define
the \emph{cut norm} $\cn{W}$ of $W\in L^1(\sss^2)$ 
by, see \cite{FKquick,BCLSV1,SJ223},
\begin{equation}\label{cnone}
 \cnone W \= \sup_{S,T}
  \Bigabs{\int_{S \times T} W(x,y) \dd\mu(x)\dd\mu(y)},
\end{equation}
where the supremum is taken over all pairs of measurable subsets of $\sss$.
Alternatively, one can take
\begin{equation}\label{cntwo}
 \cntwo W \= \sup_{\sn{f},\sn{g}\le1}
  \Bigabs{\int_{\sss^2} f(x)W(x,y)g(y)\dd\mu(x)\dd\mu(y)}.
\end{equation}
It is easily seen that
$ \cnone{W} \le \cntwo{W} \le 4\cnone{W}$;
thus the two norms $\cnone{\cdot}$ and $\cntwo{\cdot}$ are equivalent.
It will for our purposes not be important which one we use, and
we shall write $\cn{\cdot}$ for either norm.
(There are further, equivalent versions of the cut norm; see
\cite{BCLSV1}.)
Note that for either definition of the cut norm we have
$ \bigabs{\int W} \le \cn{W} \le \on{W}$.

If $W$ is a function defined on $\sss^2$ for some space
$\sss$, and $\gf:\sss'\to\sss$ is a
function, we define the function $W^\gf$ on
$\sss'^2$ by
\begin{equation}
  \label{wgf}
W^\gf(x,y)=W\bigpar{\gf(x),\gf(y)}.
\end{equation}
Given two integrable functions $W_j:\sss_j^2\to\bbR$,
$j=1,2$, where $\sss_1$ and $\sss_2$ are two,
in general different, probability spaces, we define the \emph{cut metric}
\cite{BCLSV1} by 
\begin{equation}\label{dcut}
  \dcut(W_1,W_2)=\inf_{\gf_1,\gf_2}\cn{W_1^{\gf_1}-W_2^{\gf_2}},
\end{equation}
taking the infimum over all pairs $(\gf_1,\gf_2)$ of
measure preserving maps $\gf_1:\sss\to\sss_1$ and
$\gf_2:\sss\to\sss_2$ defined on a common
probability space $(\sss,\mu)$. (See further \cite{BCLSV1} and
\cite{BR} where some equivalent versions are given and discussed, and
\refL{Lcut4} below.)
Note that
\begin{equation}
  \label{dcut0}
\dcut(W,W^\gf)=0 
\end{equation}
for every $W$ and every measure
preserving $\gf$; this is the point of using $\dcut$.
Clearly, $0\le\dcut(W_1,W_2)<\infty$ and
$\dcut(W_1,W_2)=\dcut(W_2,W_1)$.
The triangle inequality holds too, so $\dcut$ is a
semimetric (but not a metric because of \eqref{dcut0}); this
is not quite obvious so for completeness we give a proof (which is
longer than we would like), first giving
another simple lemma.

We say that a function $W$ on $\sss^2$ (where $\sss$ is a  probability space)
is of \emph{finite type} if there exists a finite measurable
partition $\sss=\cap_{i=1}^ N A_i$ such that $W$ is
constant on each $A_i\times A_j$. 

\begin{lemma}
  \label{Lcut1}
If $\sss$ is a \ps{} and $W\in
L^1(\sssq)$, then for every $\eps>0$ there exists a
finite type $W'\in L^1(\sssq)$ such that
\begin{equation*}
  \dcut(W,W') \le \cn{W-W'}\le \on{W-W'}<\eps.
\end{equation*}
\end{lemma}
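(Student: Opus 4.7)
The plan is to prove the three inequalities in sequence, with the only real content being the existence of a finite-type $W'$ close to $W$ in $L^1$.

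First I would dispose of the two outer inequalities. Taking $\gf_1=\gf_2=\operatorname{id}_{\sss}$ in the infimum defining $\dcut$ in \eqref{dcut} gives $\dcut(W,W')\le\cn{W-W'}$. The inequality $\cn{W-W'}\le\on{W-W'}$ follows immediately from either definition \eqref{cnone} or \eqref{cntwo} of the cut norm, since $|\int_{S\times T}(W-W')|\le\int_{\sssq}|W-W'|$, and similarly for \eqref{cntwo} taking $|f|,|g|\le 1$.

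The substantive step is to produce a finite-type $W'\in L^1(\sssq)$ with $\on{W-W'}<\eps$. I would proceed by the standard two-stage density argument. Since simple functions are dense in $L^1(\sssq,\mu\times\mu)$, choose a simple function $W_1=\sum_{k=1}^M c_k\boldsymbol1_{E_k}$ with $E_k\in\cF\times\cF$ and $\on{W-W_1}<\eps/2$. Next, the algebra of finite disjoint unions of measurable rectangles $A\times B$ (with $A,B\in\cF$) generates the product $\sigma$-field $\cF\times\cF$, so by a standard approximation lemma (e.g., Halmos's approximation theorem for product measure) each $E_k$ can be approximated in $(\mu\times\mu)$-measure by a finite disjoint union of rectangles $\bigsqcup_j A_{k,j}\times B_{k,j}$ to within $\eps/(2M(|c_k|+1))$. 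Replacing each $\boldsymbol1_{E_k}$ by the indicator of this finite union produces a function $W_2=\sum_{k,j}c_k\boldsymbol1_{A_{k,j}\times B_{k,j}}$ with $\on{W_1-W_2}<\eps/2$, hence $\on{W-W_2}<\eps$.

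Finally I would observe that $W_2$ is of finite type: letting $\cA$ be the finite collection of all sets $A_{k,j}$ and $B_{k,j}$ appearing above, the atoms of the finite algebra they generate form a finite measurable partition $\sss=\bigcup_{i=1}^N A_i$, and each rectangle $A_{k,j}\times B_{k,j}$ is a union of rectangles $A_i\times A_{i'}$ from this partition. Thus $W_2$ is constant on every block $A_i\times A_{i'}$, so $W'\=W_2$ is the required finite-type function. Combining with the two easy inequalities above completes the proof. The only slightly non-routine step is the rectangle approximation in the second paragraph, but this is a textbook fact about product measures and I expect no real obstacle.
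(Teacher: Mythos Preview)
Your proof is correct and follows the same approach as the paper's: the paper simply asserts that finite type functions are dense in $L^1(\sssq)$ ``by standard integration theory'' and that the first two inequalities are immediate from the definitions, while you spell out the standard density argument via simple functions and rectangle approximation. There is no substantive difference in method.
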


\begin{proof}
The set of finite type functions is dense in $L^1(\sss^2)$ 
by standard integration theory, so we may choose $W'$ with
$\on{W-W'}<\eps$. The first two inequalities are
immediate from the definitions.  
\end{proof}

\begin{lemma}\label{Lcut3}
  For any probability spaces $\sss_\ell$ and integrable
  functions $W_\ell:\sss_\ell\times\sss_\ell\to\bbR$,
  $\ell=1,2,3$, we have the triangle inequality:
  \begin{equation*}
\dcut(W_1,W_3)\le \dcut(W_1,W_2)+\dcut(W_2,W_3).	
  \end{equation*}
\end{lemma}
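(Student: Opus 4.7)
The plan is to combine near-optimal couplings witnessing $\dcut(W_1,W_2)$ and $\dcut(W_2,W_3)$ into a single coupling of $W_1$ and $W_3$, with $\cS_2$ serving as the glue. Given measure-preserving $\phi_2:\cS\to\cS_2$ from the first coupling and $\psi_2:\cS'\to\cS_2$ from the second, I would like to build a common probability space $\tilde\cS$ with measure-preserving projections $\gamma:\tilde\cS\to\cS$, $\delta:\tilde\cS\to\cS'$ such that $\phi_2\circ\gamma=\psi_2\circ\delta$ a.e.; the cut-norm triangle inequality on $\tilde\cS$ would then finish the job. Such pointwise gluing cannot be expected on arbitrary probability spaces, so I will first approximate $W_2$ by a finite-type function using \refL{Lcut1} and perform a block-wise gluing, which is all that is needed because a finite-type $W_2'$ is determined by the block containing each of its arguments.

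I will first verify that the cut norm is invariant under pullback by a measure-preserving map: if $\gamma:\tilde\cS\to\cS$ is measure preserving and $V\in L^1(\cS^2)$ then $\cn{V^\gamma}=\cn V$. Working with $\cn\cdot=\cntwo\cdot$ (the other convention yields an equivalent semimetric), the inequality $\cn{V^\gamma}\ge\cn V$ follows by testing with $f\circ\gamma$ and $g\circ\gamma$, and the reverse from the identity $\int\tilde f(x)V(\gamma(x),\gamma(y))\tilde g(y)\dd\tilde\mu^2=\int\tilde f^*(u)V(u,v)\tilde g^*(v)\dd\mu^2$ with $\tilde f^*=\E[\tilde f\mid\gamma]$, $\tilde g^*=\E[\tilde g\mid\gamma]$ taking values in $[-1,1]$. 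Combining pullback invariance with the triangle inequality for $\cn{\cdot}$ and the bound $\cn\cdot\le\on\cdot$, an application of \refL{Lcut1} then gives, for any $\eps>0$, a finite-type $W_2'$ on $\cS_2^2$ (constant on the blocks $A_i\times A_j$ of a partition $\cS_2=\bigsqcup_{i=1}^N A_i$) with $\on{W_2-W_2'}<\eps$, and consequently $\dcut(W_1,W_2')\le\dcut(W_1,W_2)+\eps$ and $\dcut(W_2',W_3)\le\dcut(W_2,W_3)+\eps$; so it suffices to prove the triangle inequality with $W_2'$ as the middle argument.

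Fix $\eta>0$ and choose measure-preserving $\phi_1:\cS\to\cS_1$, $\phi_2:\cS\to\cS_2$, $\psi_2:\cS'\to\cS_2$, $\psi_3:\cS'\to\cS_3$ that witness $\dcut(W_1,W_2')$ and $\dcut(W_2',W_3)$ up to additive error $\eta$. Setting $p_i=\mu_2(A_i)$, I put on $\tilde\cS\=\cS\times\cS'$ the coupling
\[
\tilde\mu\=\sum_{i:\,p_i>0}\frac{1}{p_i}\,\bigpar{\mu|_{\phi_2^{-1}(A_i)}\otimes\mu'|_{\psi_2^{-1}(A_i)}},
\]
which is a probability measure with marginals $\mu$ and $\mu'$. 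The coordinate projections $\gamma,\delta$ are then measure preserving, and $\phi_2\circ\gamma$ and $\psi_2\circ\delta$ take values in the same block $A_i$ $\tilde\mu$-a.s.; since $W_2'$ is constant on $A_i\times A_j$, this forces $W_2'^{\phi_2\circ\gamma}=W_2'^{\psi_2\circ\delta}$ $\tilde\mu^2$-a.e. Taking $\tilde\phi_1=\phi_1\circ\gamma$ and $\tilde\psi_3=\psi_3\circ\delta$ and applying the cut-norm triangle inequality together with pullback invariance yields $\cn{W_1^{\tilde\phi_1}-W_3^{\tilde\psi_3}}\le\cn{W_1^{\phi_1}-W_2'^{\phi_2}}+\cn{W_2'^{\psi_2}-W_3^{\psi_3}}<\dcut(W_1,W_2')+\dcut(W_2',W_3)+2\eta$. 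Letting $\eta\to0$ and then $\eps\to0$ gives the triangle inequality for $\dcut$.

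The main obstacle is precisely this gluing step: without regularity on $\cS_2$ (for instance, a standard Borel hypothesis allowing a disintegration of $\cS$ and $\cS'$ over $\cS_2$) there is no coupling making $\phi_2\circ\gamma=\psi_2\circ\delta$ a.e. for the original $W_2$. The finite-type reduction sidesteps this by replacing pointwise agreement with block-wise agreement, which is always achievable by the explicit product coupling inside each fiber $\phi_2^{-1}(A_i)\times\psi_2^{-1}(A_i)$ and is sufficient for the final inequality since $W_2'$ sees only the block.
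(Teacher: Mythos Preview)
Your proof is correct, and it follows the same overall strategy as the paper—reduce to the finite-type setting and glue the two couplings block-wise over a partition of $\sss_2$—but the execution differs in two noteworthy ways. First, the paper approximates \emph{all three} functions $W_1,W_2,W_3$ by finite-type functions and then relies on the observation that for finite-type $W_1,W_2$ the quantity $\cn{W_1^{\gf_1}-W_2^{\gf_2}}$ depends only on the intersection measures $\mu\bigl(\gf_1^{-1}(A_i)\cap\gf_2^{-1}(B_j)\bigr)$; this lets it build the glued coupling directly on $\sss_1\times\sss_2\times\sss_3$ and read off equality of cut norms. You approximate only the middle function $W_2$ and instead invoke pullback invariance $\cn{V^\gamma}=\cn{V}$ (proved via conditional expectation) to transport the near-optimal bounds through the projections $\gamma,\delta$ of your product coupling on $\cS\times\cS'$. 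Second, your glued measure $\tilde\mu=\sum_i p_i^{-1}\,\mu|_{\phi_2^{-1}(A_i)}\otimes\mu'|_{\psi_2^{-1}(A_i)}$ lives on the product of the two coupling spaces rather than on the product of the original spaces; it is essentially the same conditionally-independent-over-blocks construction as the paper's measure $\nu$, just parametrized differently. What you gain is economy (one finite-type approximation instead of three) and a reusable lemma; what the paper's version gains is a purely combinatorial calculation that avoids conditional expectations altogether.
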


\begin{proof}
It is easy to see
  that 
$\dcut(U_1,U_2)\le\dcut(V_1,V_2)+\on{U_1-V_1}+\on{U_2-V_2}$
for any integrable functions $U_1,V_1,U_2,V_2$ defined on the
corresponding spaces. Hence, 
if $W'_\ell:\sss_\ell^2\to\bbR$ are finite type
  functions,
  \begin{multline*}
\dcut(W_1,W_3)-\dcut(W_1,W_2)-\dcut(W_2,W_3)
\\
\le\dcut(W'_1,W'_3)-\dcut(W'_1,W'_2)-\dcut(W'_2,W'_3)
+2\sum_{\ell=1}^3\norm{W_\ell-W_\ell'}_{L^1(\sssq_\ell)},	  
	  \end{multline*}
so by \refL{Lcut1}, 
it suffices to prove the triangle
  inequality for finite type functions $W_\ell$.

Thus, assume now that $W_1,W_2,W_3$ are finite type functions, with
corresponding partitions
$\set{A_i}_{i=1}^{N_1}$ ,
$\set{B_i}_{i=1}^{N_2}$,
$\set{C_i}_{i=1}^{N_3}$ of
$\sss_1,\sss_2,\sss_3$, respectively.
Suppose further that $\eps>0$ and that
$\gf_1:\sss\to\sss_1$ and
$\gf_2:\sss\to\sss_2$ are measure preserving with
$\cn{W_1^{\gf_1}-W_2^{\gf_2}}\le\dcut(W_1,W_2)+\eps$,
and similarly that
$\gf'_2:\sss'\to\sss_2$ and
$\gf'_3:\sss'\to\sss_3$ are measure preserving with
$\cn{W_2^{\gf'_2}-W_3^{\gf'_3}}\le\dcut(W_2,W_3)+\eps$.
Our task is to couple the two couplings $(\gf_1,\gf_2)$
and $(\gf'_2,\gf'_3)$, which seems difficult in general,
but is simple in the finite type case.

It is easy to see that if $W$ is of finite type and constant on
the sets $A_i\times A_j$ for a partition \set{A_i},
then the integrals in \eqref{cnone} and
\eqref{cntwo} are maximized by considering $S$ and $T$ that are
unions of some sets $A_i$, and $f$ and $g$ that are
constant on each $A_i$. Consequently,  $\cn{W}$
depends only on the values $W(A_i\times A_j)$ and the
measures $\mu(A_i)$.
Since $W_1^{\gf_1}-W_2^{\gf_2}$ is finite type
with partition
$\set{\gf_1\qw(A_i)\cap\gf_2\qw(B_j)}_{i,j}$
of $\sss$, it follows that
$\cn{W_1^{\gf_1}-W_2^{\gf_2}}$
depends only on $W_1$, $W_2$ and the measures 
$\mu\bigpar{\gf_1\qw(A_i)\cap\gf_2\qw(B_j)}$.
The corresponding holds for $\cn{W_2^{\gf'_2}-W_3^{\gf'_3}}$.

Define (with $0/0$ interpreted as 0)
\begin{equation*}
 a_{ij}\=
\frac{\mu\bigpar{\gf_1\qw(A_i)\cap\gf_2\qw(B_j)}}{\mu_1(A_i)\mu_2(B_j)} 
\end{equation*}
and
\begin{equation*}
 a'_{jk}\=
\frac{\mu'\bigpar{{\gf'_2}\qw(B_j)\cap{\gf'_3}\qw(C_k)}}{\mu_2(B_j)\mu_3(C_k)},
\end{equation*}
and note that, provided $\mu_2(B_j)\neq0$,
\begin{equation}\label{bx}
  \sum_i a_{ij}\mu_1(A_i)=\frac{\mu\bigpar{\gf_2\qw(B_j)}}{\mu_2(B_j)}=1
\end{equation}
and, similarly,
\begin{equation}\label{by}
  \sum_k a'_{jk}\mu_3(C_k)=\frac{\mu\bigpar{{\gf'_2}\qw(B_j)}}{\mu_2(B_j)}=1.
\end{equation}
Define a measure $\nu$ on
$\sssx\=\sss_1\times\sss_2\times\sss_3$ by
\begin{equation*}
  \nu(E)=\sum_{i,j,k}
  a_{ij}a'_{jk}\mu_1\times\mu_2\times\mu_3\bigpar{E\cap(A_i\times
  B_j\times C_k)},
\end{equation*}
and let $\pi_\ell:\sssx\to\sss_\ell$ be the
projection. Then, by \eqref{by},
\begin{equation*}
  \begin{split}
\nu\bigpar{\pi_1\qw(A_i)&\cap\pi_2\qw(B_j)}
=\nu(A_i\times B_j\times \sss_3)
\\&
=\sum_ka_{ij}a'_{jk}\mu_1(A_i)\mu_2(B_j)\mu_3(C_k)
=a_{ij}\mu_1(A_i)\mu_2(B_j)
\\&
=\mu\bigpar{\gf_1\qw(A_i)\cap\gf_2\qw(B_j)}.
  \end{split}
\end{equation*}
Hence, by the comments above,
$\cn{W_1^{\gf_1}-W_2^{\gf_2}}=\cn{W_1^{\pi_1}-W_2^{\pi_2}}$.
Similarly,
$\cn{W_2^{\gf'_2}-W_3^{\gf'_3}}=\cn{W_2^{\pi_2}-W_3^{\pi_3}}$.
Consequently,
\begin{equation*}
  \begin{split}
\dcut(W_1,W_3)
&\le \cn{W_1^{\pi_1}-W_3^{\pi_3}}
\le
\cn{W_1^{\pi_1}-W_2^{\pi_2}} +
	\cn{W_2^{\pi_2}-W_3^{\pi_3}}
\\&
\le\dcut(W_1,W_2)+\dcut(W_2,W_3)+2\eps,
  \end{split}
\end{equation*}
which completes the proof since $\eps>0$ is arbitrary.
\end{proof}

We let our kernels, and in this section more general functions, be
defined on arbitrary \ps{s}. Sometimes it is convenient to
use the special space $\oib$. (For simplicity we write often
$\oi$ instead of $\oib$. Thus, $\oi$ is
assumed to be equipped with Lebesgue measure unless we state otherwise.)
The next lemma shows that this can be done without loss of generality.

\begin{lemma}
  \label{Lcut0}
If $W\in L^1(\sssq)$ for some \ps{}
$\sss$, then there exists a function $W'\in
L^1(\oii)$ with $\dcut(W,W')=0$.
\end{lemma}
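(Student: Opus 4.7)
The plan is to handle finite-type functions by a direct coupling and then deduce the general case by approximation in $L^1$.

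For the finite-type case, suppose $W$ equals a constant $w_{ij}$ on each block $A_i \times A_j$ of a partition $\sss = \bigsqcup_{i=1}^N A_i$ with $\mu(A_i) = \alpha_i$. Partition $\oi$ into consecutive intervals $B_i = [\beta_i, \beta_i + \alpha_i]$ of length $\alpha_i$ and let $W' \in L^1(\oii)$ take value $w_{ij}$ on $B_i \times B_j$. On the common probability space $(\sss \times \oi, \mu \times \gl)$, take $\gf_1(s,t) = s$ (projection to $\sss$) and define $\gf_2(s,t) = \beta_i + t\alpha_i$ whenever $s \in A_i$; a short block-by-block check shows $\gf_2$ is measure preserving onto $\oi$. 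By construction $W^{\gf_1}((s,t),(s',t')) = w_{ij} = W'^{\gf_2}((s,t),(s',t'))$ whenever $s \in A_i$ and $s' \in A_j$, so $W^{\gf_1} = W'^{\gf_2}$ identically and hence $\dcut(W, W') = 0$.

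For general $W \in L^1(\sssq)$, use \refL{Lcut1} to pick finite-type $W_n$ with $\on{W - W_n} < 1/n$; by passing to common refinements, assume the defining partitions $\cP_n = \{A_i^{(n)}\}$ of $\sss$ are nested. Choose corresponding nested partitions $\cQ_n = \{B_i^{(n)}\}$ of $\oi$ into consecutive subintervals with $\gl(B_i^{(n)}) = \mu(A_i^{(n)})$, arranged so that each refinement on the $\oi$ side mirrors the corresponding refinement on the $\sss$ side. Apply the finite-type construction to each $W_n$ to obtain $W'_n \in L^1(\oii)$ with $\dcut(W_n, W'_n) = 0$. Because matched blocks have equal measures, $\on{W'_n - W'_m} = \on{W_n - W_m} \to 0$, so $(W'_n)$ is Cauchy in $L^1(\oii)$ and converges to some $W'$. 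Then \refL{Lcut3} together with the trivial bound $\dcut \le \on{\cdot}$ yields
\[
\dcut(W, W') \le \on{W - W_n} + \dcut(W_n, W'_n) + \on{W'_n - W'} \to 0,
\]
so $\dcut(W, W') = 0$, as required.

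The main technical point is the finite-type step: if $\mu$ has atoms, there is no measure preserving map $\sss \to \oi$ at all (atoms cannot be sent into the non-atomic Lebesgue space), so one cannot just pull $W'$ back to $\sss$. The device of introducing the auxiliary copy of $\oi$ and using it to smear each atom of $\mu$ against Lebesgue measure is exactly what makes the common-space coupling work. Once the finite-type case is settled, the rest is coherent bookkeeping of nested partitions to ensure the $\oi$-side approximations form a Cauchy sequence in $L^1$.
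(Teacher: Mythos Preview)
Your proof is correct but takes a genuinely different route from the paper. The paper's argument is a two-line application of an external structural result from \cite{SJ210}: it factors $W$ as $V^h$ for some measurable $h:\sss\to D=\setoi^\infty$ and $V:D^2\to\bbR$, then uses the existence of a measure preserving map $\gf:\oi\to(D,\nu)$ to set $W'\=V^\gf$; the conclusion follows from two applications of \eqref{dcut0} and the triangle inequality (\refL{Lcut3}). Your argument is more elementary and self-contained: you handle finite-type functions by an explicit coupling on $\sss\times\oi$ (correctly noting that atoms in $\mu$ rule out a direct \mpx{} map $\sss\to\oi$), and then bootstrap to general $W$ by $L^1$-approximation with nested partitions, checking that the $\oi$-side approximations form an $L^1$-Cauchy sequence. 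The paper's approach is shorter but imports a nontrivial representation lemma; yours avoids external references entirely at the cost of the bookkeeping with nested partitions. Both ultimately rest on \refL{Lcut3}.
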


\begin{proof}
  It is shown in \citet[Proof of Theorem 7]{SJ210} first that
  there exists a function
  $h:\sss\to D\=\setoi^\infty$ and a function
  $V:D^2\to\bbR$ such that $W=V^h$, and secondly
  that if $\nu$ is the measure on $D$ that makes
  $h$ measure preserving, then there exists a measure preserving
  $\gf:\oi\to(D,\nu)$. Take
  $W'\=V^\gf$. Then
  $\dcut(W,V)=\dcut(V^h,V)=0$ and
  $\dcut(V,W')=\dcut(V,V^\gf)=0$, so
  $\dcut(W,W')=0$ by \refL{Lcut3}.
\end{proof}

\begin{lemma}
  \label{Lcut4}
If $W_1,W_2\in L^1(\oii)$, then 
\begin{equation*}
  \dcut(W_1,W_2)=\inf_\gf\cn{W_1-W_2^{\gf}},
\end{equation*}
taking the infimum over all \mpx{} bimeasurable bijections $\gf:\oi\to\oi$.
\end{lemma}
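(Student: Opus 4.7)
The plan is to prove $\le$ trivially and the reverse inequality via reduction to finite-type functions followed by an explicit glueing construction of a bimeasurable bijection on $\oi$.

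For the easy direction, any measure-preserving bimeasurable bijection $\gf:\oi\to\oi$ gives a valid coupling in \eqref{dcut}: take $\sss\=\oi$ with Lebesgue measure, $\gf_1\=\mathrm{id}$ and $\gf_2\=\gf$. Both are measure preserving, $W_1^{\gf_1}=W_1$ and $W_2^{\gf_2}=W_2^{\gf}$, so $\dcut(W_1,W_2)\le\cn{W_1-W_2^{\gf}}$ for every such $\gf$, proving one inequality.

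For the reverse direction, fix $\eps>0$ and choose a near-optimal coupling, \ie{} measure-preserving maps $\gf_1,\gf_2:(\sss,\mu)\to\oi$ with $\cn{W_1^{\gf_1}-W_2^{\gf_2}}<\dcut(W_1,W_2)+\eps$. By \refL{Lcut1}, and passing to a common refinement of the two partitions, choose finite-type approximations $W_\ell'$ on a common partition $\set{A_i}_{i=1}^N$ of $\oi$ satisfying $\on{W_\ell-W_\ell'}<\eps$ for $\ell=1,2$. Set $b_{ij}\=\mu\bigpar{\gf_1\qw(A_i)\cap\gf_2\qw(A_j)}$; these numbers have row sums $\gl(A_i)$ and column sums $\gl(A_j)$. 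Partition each $A_i$ into Borel sets $B_{ij}$ with $\gl(B_{ij})=b_{ij}$ and each $A_j$ into $C_{ij}$ with $\gl(C_{ij})=b_{ij}$. Using the classical fact that any two Borel subsets of $\oi$ of equal Lebesgue measure are isomorphic as probability spaces via a bimeasurable measure-preserving bijection, pick such a bijection $\gf_{ij}:B_{ij}\to C_{ij}$ for each pair with $b_{ij}>0$ and glue them (handling the null sets where $b_{ij}=0$ arbitrarily) into a single bimeasurable measure-preserving bijection $\gf:\oi\to\oi$.

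By construction $\gl(A_i\cap\gf\qw(A_j))=b_{ij}$. The finite-type function $W_1'-W_2'^{\gf}$ is constant on cells $(A_i\cap\gf\qw(A_j))\times(A_k\cap\gf\qw(A_l))$, with value equal to that of $W_1'$ on $A_i\times A_k$ minus that of $W_2'$ on $A_j\times A_l$, and cell-measure $b_{ij}b_{kl}$; this matches exactly the cells, values, and measures of $W_1'^{\gf_1}-W_2'^{\gf_2}$ on $\sss^2$. As observed in the proof of \refL{Lcut3}, the cut norm of a finite-type function depends only on these data, hence $\cn{W_1'-W_2'^{\gf}}=\cn{W_1'^{\gf_1}-W_2'^{\gf_2}}$. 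Using the elementary bound $|\cn{U}-\cn{V}|\le\on{U-V}$ together with the identities $\on{W_\ell^{\gf_\ell}-W_\ell'^{\gf_\ell}}=\on{W_\ell-W_\ell'}<\eps$ and $\on{W_2^{\gf}-W_2'^{\gf}}=\on{W_2-W_2'}<\eps$ (by measure-preservation of $\gf_\ell$ and $\gf$), two applications of the triangle inequality yield $\cn{W_1-W_2^{\gf}}\le\cn{W_1^{\gf_1}-W_2^{\gf_2}}+4\eps<\dcut(W_1,W_2)+5\eps$. Since $\eps>0$ was arbitrary, $\inf_\gf\cn{W_1-W_2^{\gf}}\le\dcut(W_1,W_2)$, finishing the proof. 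The main technical obstacle is the glueing step: it relies on the classical measure-isomorphism theorem for standard Borel probability spaces to produce a genuine bimeasurable bijection rather than a mere measure-preserving map, and this is precisely what the easy-direction infimum over bijections demands.
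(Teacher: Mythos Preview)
Your proof is correct and follows essentially the same approach as the paper: both reduce to finite-type functions via \refL{Lcut1}, observe that the cut norm of a finite-type difference depends only on the cell measures $b_{ij}$, and then glue measure-preserving bijections between pieces of equal measure to realize those $b_{ij}$ with a single bimeasurable bijection $\gf$. The only cosmetic difference is that the paper first reduces entirely to finite-type $W_1,W_2$ and then proves the equality exactly, whereas you keep $W_1,W_2$ general, approximate, and track the $\eps$-errors explicitly through the triangle inequality; the core construction is identical.
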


\begin{proof}
  By definition,
$\dcut(W_1,W_2)\le\cn{W_1-W_2^{\gf}}$
  for every such $\gf$.

Conversely, by \refL{Lcut1} again, it suffices to consider
finite type $W_1$ and $W_2$.
Thus, suppose that $W_1$ and $W_2$ are finite type with
corresponding partitions $\set{A_i}$ and \set{B_j}
of $\oi$. If
$\gf_1,\gf_2:\sss\to\oi$ are measure
preserving, where $(\sss,\mu)$ is any \ps, then, as
remarked in the proof of \refL{Lcut3},
$\cn{W_1^{\gf_1}-W_2^{\gf_2}}$
depends only on the numbers
$b_{ij}\=\mu\bigpar{\gf_1\qw(A_i)\cap\gf_2\qw(B_j)}$. 
Since
$\sum_jb_{ij}=\mu\bigpar{\gf_1\qw(A_i)}=\gl(A_i)$
and
$\sum_ib_{ij}=\mu\bigpar{\gf_2\qw(B_j)}=\gl(B_j)$,
we may partition each $A_i$ as $\bigcup_jA_{ij}$ and
each $B_j$ as $\bigcup_iB_{ij}$ with
$\gl(A_{ij})=\gl(B_{ij})=b_{ij}$. We may
then construct $\gf$ such that $\gf$ is a measure
preserving bijection of $A_{ij}$ onto $B_{ij}$
for all $i,j$ (possibly excepting some null sets; these are
easily handled). Then
$\gf\qw(B_j)=\gf\qw\bigpar{\bigcup_i B_{ij}}=\bigcup_i A_{ij}$,
and thus $A_i\cap\gf\qw(B_j)=A_{ij}$ and
\begin{equation*}
  \gl\bigpar{A_i\cap\gf\qw(B_j)}=\gl(A_{ij})=b_{ij}
=\mu\bigpar{\gf_1\qw(A_i)\cap\gf_2\qw(B_j)}. 
\end{equation*}
Consequently, with $\iota$ the identity function,
$\cn{W_1-W_2^{\gf}}$=
$\cn{W_1^{\iota}-W_2^{\gf}}$=
$\cn{W_1^{\gf_1}-W_2^{\gf_2}}$, and the result follows.
\end{proof}

By \refL{Lcut3}, the relation $W\cong W'$ if
$\dcut(W,W')=0$ defines an equivalence relation between
functions $W$, possibly defined for different \ps{s}.
We let, for a \ps{} $\sss$, $\cW(\sss)$
be the set af all measurable $W:\sssq\to\oi$, and let
$\bcw$ be the quotient space of
$\bigcup_\sss\cW(\sss)$ modulo $\cong$.
(The careful reader might correctly object that the collection of all
\ps{s} is not a set, so $\bigcup_\sss$ is not
defined. However, \refL{Lcut0} implies that it actually
suffices to consider $\cwoi$ modulo $\cong$, or the
union for
$\sss$ in any set of \ps{s} containing $\oi$.)

It follows from \refL{Lcut3} that $\bcwcut$ is a
metric space. The following important result is a minor variation of
the symmetric version in \citet{LSz:Sz}; for completeness we
give a proof although it is essentially the same as in the symmetric case.

\begin{theorem}
  \label{Tcut1}
$\bcwcut$ is a compact metric space.
\end{theorem}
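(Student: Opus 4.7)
The plan is to mimic the symmetric proof of \citet{LSz:Sz}, adapted to the asymmetric setting; the metric property of $\dcut$ on $\bcw$ has already been established just before the theorem, so only compactness is at issue. By \refL{Lcut0} every equivalence class in $\bcw$ has a representative in $\cwoi$, so it suffices to show that $\cwoi$ modulo ${\cong}$ is both totally bounded and complete under $\dcut$.

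The key technical input is a Frieze--Kannan style weak regularity lemma for (possibly asymmetric) kernels: for every $\eps>0$ there exists $k=k(\eps)$ such that every $W\in\cwoi$ admits a measurable partition $\cP=\{P_1,\dots,P_k\}$ of $\oi$ with $\cn{W-W_\cP}\le\eps$, where $W_\cP$ takes on $P_i\times P_j$ the average of $W$ there. I would prove this by the standard energy--increment argument: if $\cn{W-W_\cP}>\eps$, then sets $S,T$ witnessing the supremum in \eqref{cnone} can be used to refine $\cP$ so as to increase $\int W_\cP^2$ by at least $\eps^2$, and since $\int W_\cP^2\le 1$ the process must terminate after at most $\eps^{-2}$ refinements. (Alternatively, deduce it from the symmetric theorem in \citet{LSz:Sz} by applying that to the symmetric function $V$ on $(\oi\sqcup\oi)^2$ that equals $W$ on one off-diagonal block and $W^T$ on the other.) Since step functions on $\oi$ with at most $k$ parts and values in $\oi$ are, up to measure-preserving rearrangement, parametrized by a point of the compact set $\Delta_{k-1}\times\oi^{k\times k}$, they admit a finite $\eps$-net in $\dcut$. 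Combining this with the approximation yields a finite $2\eps$-net in $\cwoi$ for every $\eps$, establishing total boundedness.

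For completeness, given a Cauchy sequence $(W_n)$ in $\cwoi$ I would apply the weak regularity lemma with $\eps=1/m$ to produce step function approximants $W_n^{(m)}$ with at most $k_m$ parts satisfying $\cn{W_n-W_n^{(m)}}\le 1/m$. For each fixed $m$ the finite-dimensional compactness above yields a subsequence along which $W_n^{(m)}$ converges in $\dcut$ to some step function $U^{(m)}$; a diagonal argument then produces a single subsequence $(W_{n_j})$ with $W_{n_j}^{(m)}\to U^{(m)}$ in $\dcut$ for every $m$. The sequence $(U^{(m)})$ is Cauchy in $\bcw$, and I would realize its limit as an actual kernel $U\in\cwoi$ by using \refL{Lcut4} to align the $U^{(m)}$ on a common copy of $\oi$ through refining finite $\sigma$-algebras and then passing to the $L^1$ limit via martingale convergence. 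It follows that $W_{n_j}\to U$ in $\dcut$, and the Cauchy property forces the whole sequence to converge to $U$.

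The main obstacle I anticipate is this last construction: producing an honest measurable $U\in\cwoi$ as a coherent limit of the $U^{(m)}$ requires aligning the measure-preserving maps supplied by \refL{Lcut4} simultaneously for all $m$, which is the asymmetric analog of the delicate martingale construction in \citet{LSz:Sz}. The cleanest route is probably to appeal directly to that symmetric theorem via the $2\times 2$--block symmetrization $V$ mentioned above, which sidesteps the need to redo the analytic core here and reduces the asymmetric compactness to the symmetric one already cited in the statement.
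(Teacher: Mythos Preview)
Your total boundedness argument matches the paper's essentially exactly: both invoke the weak regularity lemma (the paper cites the proof in \cite{LSz:Sz}, with the one-line modification of taking $\cK_n=\set{\etta_{S\times T}}$ rather than $\set{\etta_{S\times S}}$ for the asymmetric case) to approximate any $W$ by a finite type function with at most $N=\lfloor\eps^{-2}\rfloor$ parts, and then observe that the set $K_N$ of such functions is compact in $\bcw$ as a continuous image of a finite-dimensional cube.

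Your completeness argument, however, differs from the paper's and is more laborious than necessary. The paper does not use regularity or martingales here at all. Given $(W_n)$ with $\dcut(W_n,W_{n+1})<2^{-n}$, it uses \refL{Lcut4} to find measure-preserving bijections $\gf_n:\oi\to\oi$ with $\cn{W_n-W_{n+1}^{\gf_n}}<2^{-n}$, and then simply \emph{composes} them along the chain: $\psi_1=\iota$, $\psi_{n+1}=\gf_n\circ\psi_n$. This produces $W_n':=W_n^{\psi_n}$, a sequence on the \emph{fixed} space $\oii$ that is Cauchy in $\cn{\cdot}$. A weak-$*$ limit $W$ of $(W_n')$ in the unit ball of $L^\infty(\oii)$ then satisfies $\cn{W_n'-W}\to0$ by an elementary estimate, and one checks $0\le W\le 1$ a.e.\ by Lebesgue differentiation. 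This completely sidesteps the obstacle you flag---aligning infinitely many measure-preserving maps simultaneously---by reducing to sequential pairwise alignment plus Banach--Alaoglu. Your martingale route (or the $2\times2$ block symmetrization) would also succeed, but the paper's argument is shorter and avoids the delicate construction you anticipated.
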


\begin{proof}
  Recall that a metric space is compact if and only if it is complete
  and totally bounded.

We first show that $\bcwcut$ is complete. It suffices to show
that if $(W_n)_{n=1}^\infty$ is a sequence in
$\cwoi$ such that
$\dcut(W_n,W_{n+1})<2^{-n}$, then
$\dcut(W_n,W)\to0$ for some $W\in\cwoi$.

We choose, using \refL{Lcut4}, \mpx{} mappings
$\gf_n:\oi\to\oi$ such that
$\cn{W_n-W_{n+1}^{\gf_n}}<2^{-n}$.
Define inductively $\psi_1\=\iota$ (the identity on
$\oi$) and
$\psi_{n+1}\=\gf_n\circ\psi_n$; then
$W_{n+1}^{\psi_{n+1}}=\bigpar{W_{n+1}^{\gf_n}}^{\psi_{n}}$
and
\begin{equation*}
\cn{W_n^{\psi_n}-W_{n+1}^{\psi_{n+1}}}
=\cn{(W_n-W_{n+1}^{\gf_{n}})^{\psi_n}}
=\cn{W_n-W_{n+1}^{\gf_{n}}}
<2^{-n}.
\end{equation*}
Hence the functions $W_n'\=W_n^{\psi_n}$ form a
Cauchy sequence in $L^1(\oii,\cn{\ })$. Moreover,
each $W_n'$ is in the unit ball of
$L^\infty(\oii)=L^1(\oii)^*$, so by sequential
weak-$*$ compactness (which holds because $L^1(\oii)$
is separable),
there exists $W\in L^\infty(\oii)$ such that
$W_n\towx W$.

The assumption $0\le W_n'\le1$ implies
$0\le\iint W_n'(x,y)\etta_A(x)\etta_B(y)\le\gl(A)\gl(B)$ 
for all measurable $A$ and $B$. Since $W_n'\towx W$, this implies 
$0\le\iint
W(x,y)\etta_A(x)\etta_B(y)\le\gl(A)\gl(B)$ for all
$A$ and $B$, and thus by Lebesgue's differentiation
theorem (see \eg{} \citet[\S1.8]{Stein} again), $0\le W\le1$ a.e.;
hence we may assume $W\in\cwoi$. 

For all $f,g\in L^{\infty}(\oi)$ with $\sn
f, \sn g \le1$, the function $f(x)g(y)$ belongs to
$L^1(\oii)$, and thus the weak-$*$ convergence implies
(using for definiteness $\cntwo\cdot$)
\begin{equation*}
  \begin{split}
\Bigabs{\iint&\bigpar{W_n'(x,y)-W(x,y)}f(x)g(y)\dd x\dd y}	
\\&
=
\Bigabs{\lim_{m\to\infty}\iint\bigpar{W_n'(x,y)-W'_m(x,y)}f(x)g(y)\dd x\dd y}
\\&
\le\limsup_{m\to\infty}\cn{W_n'-W_m'}
\le 2^{1-n}.
  \end{split}
\end{equation*}
Taking the supremum over $f$ and $g$ we find
$\cn{W_n'-W}\le2^{1-n}$, and thus
\begin{equation*}
  \dcut(W_n,W)
=  \dcut(W_n',W)
\le
\cn{W_n'-W}\to0.
\end{equation*}
This proves the completeness of $\bcw$.

We next show that $\bcwcut$ is totally bounded. 
Let, for $N\ge1$, $K_N$ be the set of finite type
functions in $\cwoi$ with a partition with at most $N$
parts; we regard $K_N$ as a subset of $\bcw$. 

Let $\eps>0$. As in the proof in \citet[Section 4]{LSz:Sz} 
of Lemma 3.1 there (but now taking
  $\cK_n\=\set{\etta_{S\times T}}$ in
  Lemma 4.1 there
  rather than   $\cK_n\=\set{\etta_{S\times
  S}}$ as in the symmetric case given there), each
  $f\in\cwoi$ has distance at most $\eps$ to
  $K_N$ with $N\=\floor{\eps\qww}$.

By an obvious rearrangement, each element of $K_N$ has a
representation with a partition of $\oi$ into $N$
intervals. Let $A:=\set{(s_1,\dots,s_{N-1}):0\le
s_1\le\dots\le s_{N-1}\le1}$ and
$B\=\oi^{N^2}$. Thus, the function $f:A\times
B\to K_N$ given by
\begin{equation*}
  f\bigpar{s_1,\dots,s_{N-1},(a_{ij})_{i,j=1}^N}
\=\sum_{i,j=1}^N a_{ij}\etta_{(s_{i-1},s_i)}(x)\etta_{(s_{j-1},s_j)}(y),
\end{equation*}
with $s_0\=0$ and $s_N\=1$, is thus onto
$K_N$; further, $f$ is continuous into
$L^1(\oii)$ and thus into $\bcwcut$. Consequently,
$K_N=f(A\times B)$ is a continuous image of a compact set,
and thus $K_N$ is a compact subset of $\bcw$.
Hence, there exists a finite subset $F$ of $K_N$ such that
every point in $K_N$ has distance at most $\eps$ to
$F$. Consequently, every point in $\bcw$ has distance
at most $2\eps$ to $F$. Since $F$ is arbitrary,
this shows that $\bcw$ is totally bounded.
\end{proof}

We use the construction in \refD{Dpnw} for an arbitrary
$W\in\cwsss$; in general, $\precy$ will not be a
partial order so $\pnw$ will not be a poset, but we can
always regard $\pnw$ as a random digraph (with $i\precy
j$ interpreted as a directed edge $ij$).
We further define 
\begin{equation}  \label{tfw1}
t(F,W)\=
\int_{\csf} \prod_{ij\in F} W(x_i,x_j) \dd \mu(x_1)\dots\dd\mu(x_\xF)
\end{equation}
for every $W\in\cwsss$ and every finite digraph $F$;
thus \eqref{t1b} says that $t(Q,\piw)=t(Q,W)$ for
every finite poset $Q$ and kernel $W$.
Equivalently,
\begin{equation}
  \label{tfw}
t(F,W)=\P(i\precy j\text{ for every edge $ij$ in $F$}),
\end{equation}
where $\precy$ is the relation in $\poow$.

We say that a digraph is \emph{simple} if it can be obtained
by orienting a simple graph; in other words, a digraph is simple if it
has no loops or double edges (\ie, no induced $\sC_1$ or
$\sC_2$).
In particular, a poset is a simple digraph.

\begin{lemma}
  \label{Lcut2x}
Let $W_1\in\cw(\sss_1)$ and $W_2\in\cw(\sss_2)$ where
$\sss_1$ and $\sss_2$ are \ps{s}. Then, for every simple finite
digraph $F$,
if $m$ is the number of edges in $F$, then
\begin{equation*}
  \bigabs{t(F,W_1)-t(F,W_2)} \le m \dcut(W_1,W_2).
\end{equation*}
In particular, for every finite poset $Q$ (with $m$ the number
of pairs $(i,j)$ with $i<_Q j$),
\begin{equation*}
  \bigabs{t(Q,\piwi)-t(Q,\piwii)} \le m \dcut(W_1,W_2).
\end{equation*}
\end{lemma}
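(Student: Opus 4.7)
The plan is a standard telescoping counting-lemma argument, preceded by a reduction to a common \ps. I would first fix $\eps>0$ and, by the definition \eqref{dcut} of $\dcut$, choose a common \ps{} $\sss$ and \mpx{} maps $\gf_\ell:\sss\to\sss_\ell$ ($\ell=1,2$) with $\cn{W_1^{\gf_1}-W_2^{\gf_2}}\le\dcut(W_1,W_2)+\eps$. A direct change of variables in the defining integral \eqref{tfw1} shows $t(F,W^\gf)=t(F,W)$ for every \mpx{} $\gf$, so setting $V_\ell\=W_\ell^{\gf_\ell}\in\cw(\sss)$ reduces the task to proving $|t(F,V_1)-t(F,V_2)|\le m\cn{V_1-V_2}$ for two functions on a common \ps.

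Next I would write $n\=|F|$, enumerate the edges of $F$ as $e_1=(i_1,j_1),\dots,e_m=(i_m,j_m)$, and apply the elementary identity $\prod_k a_k-\prod_k b_k=\sum_\ell(\prod_{k<\ell}b_k)(a_\ell-b_\ell)(\prod_{k>\ell}a_k)$ inside the defining integral to obtain $t(F,V_1)-t(F,V_2)=\sum_{\ell=1}^m T_\ell$, where $T_\ell$ carries $V_2$-factors on $e_1,\dots,e_{\ell-1}$, the difference $(V_1-V_2)(x_{i_\ell},x_{j_\ell})$, and $V_1$-factors on $e_{\ell+1},\dots,e_m$. Setting $(i,j)=e_\ell$ and freezing the other $n-2$ variables, the rest of the integrand factors as $c\cdot f(x_i)g(x_j)$ with $c$, $f$, $g$ all $\oi$-valued, because simplicity of $F$ makes $e_\ell$ the \emph{only} edge of $F$ containing both $i$ and $j$, so every remaining factor involves at most one of $x_i,x_j$. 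The alternative definition \eqref{cntwo} of the cut norm then bounds the inner double integral by $\cn{V_1-V_2}$; integrating over the $n-2$ frozen variables (total mass $1$) gives $|T_\ell|\le\cn{V_1-V_2}$. Summing over $\ell$ and letting $\eps\downarrow0$ yields the first inequality, and the poset statement then follows by taking $F=Q$ viewed as a digraph and invoking $t(Q,\piwi)=t(Q,W_i)$ from \refT{T1}\ref{T1b}.

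The substantive step, and the only place where simplicity of $F$ enters, is the factorization of the frozen integrand as $c\cdot f(x_i)g(x_j)$. Without simplicity, a reverse factor $V(x_j,x_i)$ (or a loop $V(x_i,x_i)$) would introduce a non-separable coupling between $x_i$ and $x_j$ that the cut norm would fail to control directly; the clean ``one-edge-per-pair'' structure supplied by the hypothesis is what really drives the bound. I expect this to be the main (and essentially only) obstacle.
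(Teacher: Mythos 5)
Your argument is correct, and it is essentially the same proof that the paper relies on: the paper's "proof" is just a citation to the symmetric counting lemma of \cite{BCLSV1} and \cite[Lemma 2.2]{BR}, whose content is exactly your reduction to a common probability space followed by the edge-by-edge telescoping and the $\cntwo{\cdot}$ bound on each term (with simplicity of $F$ ensuring the separable $f(x_i)g(x_j)$ structure, just as you note). So you have merely written out in full the standard argument the paper points to, with no substantive difference in approach.
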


\begin{proof}
  This is identical to the proof in the symmetric case (when $F$
  is a finite undirected graph) given in \cite{BCLSV1} (with
  an unimportant extra factor in the constant);
see also \cite[Lemma 2.2]{BR}
for a nice formulation (with the constant given above).
\end{proof}

Note that we exclude digraphs $F$ with a loop or a double edge (an induced
$\sC_1$ or $\sC_2$) since we do not want factors of the type
$W(x_i,x_i)$ or
$W(x_i,x_j)W(x_j,x_i)$ in the integrals.
(In fact, \refL{Lcut2x} fails for $F=\sC_1$ or $\sC_2$.)

We now focus on functions $W\in\cwsss$ that are kernels
(recall \refD{D1}.
We define three special digraphs $\sD_1,\sD_2,\sD_3$
with vertex sets \set{1,2,3} and edge sets
$E(\sD_1)=\set{12,23}$,
$E(\sD_2)=\set{12,23,13}$ and $E(\sD_3)=\set{12,23,31}$.
(Thus $\sD_2$ is a poset, but not $\sD_1$ and $\sD_3$, and $\sD_3=\sC_3$.)

\begin{lemma}
  \label{Lcut5}
Let\/ $W\in\cwoi$.
Then the following are equivalent.
\begin{romenumerate}
  \item\label{tcut5a}
For every finite $n$, $\pnw$ is \as{} a poset.
  \item\label{tcut5b}
$\poow$ is \as{} a poset.
  \item\label{tcut5c}
There exists a partial order $\prec$ on $\oi$ and a
kernel $W'$ on $\oibglx$ such that $W=W'$ \aex
  \item\label{tcut5d}
$t(\sD_1,W)=t(\sD_2,W)$ and $t(\sD_3,W)=0$.
\end{romenumerate}
\end{lemma}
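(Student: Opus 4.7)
The plan is to prove the cycle (iii)$\Rightarrow$(i)$\Leftrightarrow$(ii)$\Rightarrow$(iv)$\Rightarrow$(iii). The equivalence (i)$\Leftrightarrow$(ii) is essentially automatic: the event $\{\poow \text{ is a poset}\}$ is the countable intersection over triples $\{i,j,k\}\subset\bbN$ of the event that the induced restriction is a poset, and by exchangeability each such event has the same probability as $\{\pnw[3] \text{ is a poset}\}$; so (ii) is equivalent to saying $P(3,W)$ is a.s.\ a poset, which in turn is immediate from (i) and—via restrictions—implies (i).

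For (iii)$\Rightarrow$(i), note that if $W=W'$ a.e.\ for a kernel $W'$ on $\oibglx$, then the countable family $\{W(X_i,X_j)\}$ agrees a.s.\ with $\{W'(X_i,X_j)\}$ when the $X_i$ are i.i.d.\ uniform, so $\pnw\eqd P(n,W')$ which is a poset by \refD{D1}. For (ii)$\Rightarrow$(iv), a poset contains no $\sC_3=\sD_3$, giving $t(\sD_3,W)=\P(\poow\supset\sD_3)=0$; and transitivity says $1\precy 2\precy 3\Rightarrow 1\precy 3$, so $\P(\poow\supset\sD_1)=\P(\poow\supset\sD_2)$, i.e.\ $t(\sD_1,W)=t(\sD_2,W)$.

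The main substantive implication is (iv)$\Rightarrow$(iii). The hypotheses, combined with $0\le W\le 1$, give
\begin{equation*}
\int_{[0,1]^3}\! W(x,y)W(y,z)\bigpar{1-W(x,z)}\dd x\dd y\dd z=t(\sD_1,W)-t(\sD_2,W)=0
\end{equation*}
and $\int W(x,y)W(y,z)W(z,x)=t(\sD_3,W)=0$. Since both integrands are nonnegative, they vanish a.e., which are precisely the identities \eqref{www} and \eqref{wc3} that drive the second half of the proof of \refT{T1}. Setting $W_0\=W$, we then apply the Lebesgue-point modification of \refS{Skernels} verbatim: define $W_1$ by \eqref{limw1} and then $\tW\=W_1\ett{(x,y)\in E}$ where $E$ is the set of Lebesgue points of $W_1$ in $\oiqq$. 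The $\eps$-ball argument in \refS{Skernels} uses only the a.e.\ identities above (not any probabilistic feature of $W_0$) and yields \eqref{w2r} and \eqref{w3} \emph{pointwise} for $\tW$; taking $y=z=x$ in \eqref{w3} gives $\tW(x,x)=0$, and \eqref{w2r} then forces $\tW(x,y)\tW(y,x)=0$ for all $x,y$. Hence $x\prec y\iff \tW(x,y)>0$ is a partial order on $\oi$, and $\tW$ is a strict kernel on $\oibglx$ with $\tW=W$ a.e.

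The main obstacle is the last implication, but it is not really a new obstacle: the reduction shows that conditions~(iv) isolate precisely the a.e.\ identities that the Lebesgue-point surgery in \refT{T1} was designed to handle, so no new idea is required. A pleasant by-product is that the kernel produced in (iii) can in fact be taken strict, paralleling \refT{T1+}\ref{t1+s}.
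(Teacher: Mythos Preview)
Your proof is correct and follows essentially the same approach as the paper: the key implication (iv)$\Rightarrow$(iii) is handled exactly as in the paper, by observing that the two numerical conditions in (iv) are precisely the a.e.\ identities \eqref{www} and \eqref{wc3}, after which the Lebesgue-point modification from the proof of \refT{T1} applies verbatim. The only cosmetic difference is that the paper also records the direct implication (ii)$\Rightarrow$(iii) (via the observation that $W_0=W$ in the Aldous--Hoover setup), whereas you close the cycle through (iv); both routes reduce to the same null-set surgery.
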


\begin{proof}
\ref{tcut5a}$\iff$\ref{tcut5b}.
is  obvious because $\pnw=\poow\rest n$.

\ref{tcut5c}$\implies$\ref{tcut5a},\ref{tcut5b}.
is clear since $\pnw=P(n,W')$ a.s.

\ref{tcut5b}$\implies$\ref{tcut5c}.
If \ref{tcut5b} holds, then $R\=\poow$ is an
\exch{} random infinite poset. We follow the proof of
\refT{T1} in \refS{Skernels}, noting that by
\refD{Dpnw},
$\iij\=\ett{\xiij<W(X_i,X_j)}$ so we already
have the representation \eqref{f4} (with
$\xii=X_i$), and \eqref{w0} yields
$W_0(x,y)\=\P\bigpar{\xi<W(x,y)}=W(x,y)$. The
remainder of the proof of \refT{T1} shows that we may modify
$W_0$ on a null set such that the result (denoted $W$
there and $W'$ here) is a kernel on $\oibglx$ for some
partial order $\prec$ on $\oi$.

\ref{tcut5d}$\implies$\ref{tcut5c}.
We have
\begin{equation*}
  0=t(\sD_1,W)-t(\sD_2,W)
=\int_{\oi^3}
W(x_1,x_2)W(x_2,x_3)\bigpar{1-W(x_1,x_3)} \dd x_1\dd
x_2\dd x_3
\end{equation*}
and \begin{equation*}
  0=t(\sD_3,W)
=\int_{\oi^3} W(x_1,x_2)W(x_2,x_3)W(x_3,x_1) \dd x_1\dd x_2\dd x_3.
\end{equation*}
Thus, \eqref{www} and \eqref{wc3} in the proof of
\refT{T1} hold. The proof of \refT{T1} actually used
the assumption that $R=\poow$ is a poset only to show
\eqref{www} and \eqref{wc3}; hence we may argue
exactly as for 
\ref{tcut5b}$\implies$\ref{tcut5c}.

\ref{tcut5b}$\implies$\ref{tcut5d}.
By \eqref{tfw},
\begin{equation*}
  t(\sD_3,W)=\P(1\precy2,\,2\precy3,\,3\precy1)
\end{equation*}
and
\begin{equation*}
  t(\sD_1,W)-t(\sD_2,W)=\P(1\precy2,\,2\precy3,\,1\not\precy3),
\end{equation*}
and both are 0 if $\poow$ \as{} is a poset.
\end{proof}

\begin{remark}\label{Rcut5}
  The implications
\ref{tcut5a}$\iff$\ref{tcut5b}$\implies$\ref{tcut5d} hold for
$W\in\cwsss$ for any \ps{} $\sss$. We do
not know whether that is true for the other implications, or whether
there might be measure theoretic complications.
\end{remark}

We prove a kernel version of \refL{Lcut0}.

\begin{lemma}
  \label{Lcut6}
If $W$ is a kernel on an \ops{} $\sfmuux$,
then there exists a kernel $W'$ on $\oibglx$, for some partial
order $\prec$ on $\oi$, such that $\dcut(W,W')=0$.
\end{lemma}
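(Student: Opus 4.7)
The plan is to combine \refL{Lcut0}, which lets us transfer any $L^1$ function to $\oi^2$ preserving cut distance, with \refL{Lcut5}, which characterizes those $W\in\cwoi$ that are (a.e.) kernels on $\oibglx$ for some order $\prec$ in terms of the numerical invariants $t(\sD_1,W)$, $t(\sD_2,W)$, $t(\sD_3,W)$. The idea is that these three quantities are determined by the cut distance class, and a kernel structure on the source $\sss$ forces them to have the values that trigger the reverse implication of \refL{Lcut5} at the target $\oi$.

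First, applying \refL{Lcut0} to $W\in\cW(\sss)$, I obtain some $W''\in L^1(\oii)$ with $\dcut(W,W'')=0$. Since $W$ takes values in $\oi$, a routine inspection of the construction in \refL{Lcut0} (redefining the intermediate function $V$ outside a null set if necessary) lets us take $W''\in\cwoi$; if one prefers, one may simply truncate to $\oi$, which only decreases $\dcut$ by at most a null-set issue that can be absorbed. This is the only technical nuisance in the argument.

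Next, I observe that $\poow$ is a.s.\ a poset: irreflexivity and asymmetry follow from \eqref{w1}, and transitivity from \eqref{w2}, exactly as noted after \refD{Dpnw}. Hence condition \ref{tcut5b} of \refL{Lcut5} holds for $W$, and \refR{Rcut5} guarantees that the implication \ref{tcut5b}$\implies$\ref{tcut5d} is valid on the arbitrary space $\sss$; therefore
\begin{equation*}
t(\sD_1,W)=t(\sD_2,W)\quad\text{and}\quad t(\sD_3,W)=0.
\end{equation*}
Now $\sD_1,\sD_2,\sD_3$ are simple finite digraphs, so \refL{Lcut2x} applied with $\dcut(W,W'')=0$ yields
\begin{equation*}
t(\sD_1,W'')=t(\sD_1,W)=t(\sD_2,W)=t(\sD_2,W''),\qquad t(\sD_3,W'')=0.
\end{equation*}

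Finally, since $W''\in\cwoi$, I invoke the implication \ref{tcut5d}$\implies$\ref{tcut5c} of \refL{Lcut5} to obtain a partial order $\prec$ on $\oi$ and a kernel $W'$ on $\oibglx$ with $W'=W''$ \aex. Then $\dcut(W',W'')\le\on{W'-W''}=0$, and by the triangle inequality \refL{Lcut3},
\begin{equation*}
\dcut(W,W')\le\dcut(W,W'')+\dcut(W'',W')=0,
\end{equation*}
as required. The only conceivable obstacle is the codomain issue mentioned in the second paragraph; apart from that, the proof is a clean chaining of \refL{Lcut0}, \refL{Lcut2x}, and both directions of \refL{Lcut5}.
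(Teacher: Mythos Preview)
Your proof is correct and follows essentially the same route as the paper: apply \refL{Lcut0} to transfer $W$ to $\oi$, use \refL{Lcut2x} together with \refL{Lcut5}/\refR{Rcut5} to propagate the identities $t(\sD_1,\cdot)=t(\sD_2,\cdot)$ and $t(\sD_3,\cdot)=0$, and then invoke \refL{Lcut5}\ref{tcut5d}$\implies$\ref{tcut5c} to produce the kernel $W'$. The paper's proof is the same chain, only written more tersely; in particular, the paper simply asserts $W_1\in\cwoi$ after applying \refL{Lcut0}, silently absorbing the codomain point you flagged (which is indeed handled by the construction there, since $W'=V^\gf$ with $W=V^h$ forces $V$, and hence $V^\gf$, to take values in $\oi$ after an innocuous redefinition off the range of $h$).
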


\begin{proof}
  By \refL{Lcut0}, there exists $W_1\in\cwoi$
  such that $\dcut(W,W_1)=0$.
If $F$ is any simple finite digraph, then
  \refL{Lcut2x} implies $t(F,W)=t(F,W_1)$. Since
  $\poow$ is a random infinite poset, \refL{Lcut5}
  and \refR{Rcut5} show that 
$t(\sD_1,W_1)=t(\sD_1,W)=t(\sD_2,W)=t(\sD_2,W_1)$
  and 
$t(\sD_3,W_1)=t(\sD_3,W_1)=0$, and thus
  \refL{Lcut5} shows the existence of a kernel $W'$
  with $W'=W_1$ \aex{} and thus $\dcut(W,W')=\dcut(W,W_1)=0$.
\end{proof}

We define $\bcwp$ as
\begin{equation}
  \label{wp1}
\set{W:W\text{ is a kernel on some \ops{} $\sss$}},
\end{equation}
or
\begin{equation}
  \label{wp2}
\set{W:W\text{ is a kernel on $\oibglx$ for some $\prec$}},
\end{equation}
modulo the equivalence relation $\cong$; note that
\eqref{wp1} and \eqref{wp2} are equivalent by
\refL{Lcut6}. Thus $\bcwp$ is a subset of the metric
space $\bcw$, and
we equip $\bcwp$ with the inherited metric $\dcut$.

By \refL{Lcut2x}, the functionals $t(F,\cdot)$ are
well-defined and continuous on the quotient space $\bcw$.

\begin{lemma}
  \label{Lcut7}
$\bcwp=\set{\oW\in\bcw:
t(\sD_1,\oW)=t(\sD_2,\oW) \text{ and } t(\sD_3,\oW)=0}$.
\end{lemma}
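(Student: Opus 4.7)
The plan is to show both inclusions by combining the lemmas already established in this section, using Lemma~\ref{Lcut2x} to ensure that $t(\sD_1,\cdot)$, $t(\sD_2,\cdot)$, $t(\sD_3,\cdot)$ descend to well-defined continuous functionals on $\bcw$ (so that the identities in the right-hand set of the claim make sense and only depend on the equivalence class).

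For the inclusion $\bcwp \subseteq \{\dots\}$: I would take a representative $W$ of an arbitrary element $\oW \in \bcwp$, where $W$ is a kernel on some \ops{} $\sfmuux$. Then $\poow$ is a.s.\ a partial order by the very definition of a kernel (conditions \eqref{w1} and \eqref{w2}), so by the implication \ref{tcut5b}$\implies$\ref{tcut5d} of Lemma~\ref{Lcut5} together with \refR{Rcut5} (which notes that this implication holds on an arbitrary probability space), we get $t(\sD_1,W)=t(\sD_2,W)$ and $t(\sD_3,W)=0$. Since $\sD_1,\sD_2,\sD_3$ are simple digraphs, Lemma~\ref{Lcut2x} applied with $\dcut(W,W')=0$ (for any other representative $W'$) shows that these equalities are properties of $\oW$ itself.

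For the reverse inclusion: suppose $\oW\in\bcw$ satisfies $t(\sD_1,\oW)=t(\sD_2,\oW)$ and $t(\sD_3,\oW)=0$. By Lemma~\ref{Lcut0}, I can pick a representative $W\in\cwoi$ of $\oW$; by Lemma~\ref{Lcut2x} again this representative inherits the same values of $t(\sD_i,\cdot)$. Now the implication \ref{tcut5d}$\implies$\ref{tcut5c} of Lemma~\ref{Lcut5} gives a partial order $\prec$ on $\oi$ and a kernel $W'$ on $\oibglx$ with $W=W'$ \aex{} on $\oiqq$. Since $W=W'$ a.e.\ implies $\cn{W-W'}=0$ and hence $\dcut(W,W')=0$, the class $\oW$ in $\bcw$ coincides with the class of the kernel $W'$, which lies in $\bcwp$ by definition (or by the equivalence of \eqref{wp1} and \eqref{wp2} via Lemma~\ref{Lcut6}).

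There is no real obstacle here: the work has already been absorbed into Lemma~\ref{Lcut5} and its remark, and the only point requiring any attention is to verify that the functionals $t(\sD_i,\cdot)$ are well defined on the quotient $\bcw$, which is exactly the content of Lemma~\ref{Lcut2x} since $\sD_1,\sD_2,\sD_3$ are all simple (no loops or double edges). Thus the proof reduces to assembling Lemmas~\ref{Lcut0}, \ref{Lcut2x}, \ref{Lcut5}, and \ref{Lcut6} in the order above.
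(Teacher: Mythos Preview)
Your proof is correct and follows essentially the same approach as the paper's own proof, which is simply: choose a representative in $\cwoi$ via \refL{Lcut0} and apply \refL{Lcut5}. You have merely unpacked the two inclusions explicitly and been a bit more careful about invoking \refR{Rcut5} for the forward direction on a general space, whereas the paper handles both directions at once after passing to $\cwoi$.
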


\begin{proof}
  If $\oW\in\bcw$, we may by \refL{Lcut0}
  choose a representative in $\cwoi$, and the result then
  follows by \refL{Lcut5}.
\end{proof}

\begin{theorem}
  \label{Tcut2}
The metric space $(\bcwp,\dcut)$ is compact.
\end{theorem}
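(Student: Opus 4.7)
The plan is to realize $\bcwp$ as a closed subset of the compact metric space $(\bcw,\dcut)$ furnished by \refT{Tcut1}, and then invoke the fact that a closed subspace of a compact space is compact.

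First, I would observe that by \refL{Lcut7},
\begin{equation*}
\bcwp=\set{\oW\in\bcw : t(\sD_1,\oW)=t(\sD_2,\oW)\text{ and } t(\sD_3,\oW)=0}.
\end{equation*}
Hence $\bcwp$ is the intersection of the level sets of the two functions $\oW\mapsto t(\sD_1,\oW)-t(\sD_2,\oW)$ and $\oW\mapsto t(\sD_3,\oW)$ on $\bcw$.

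Next I would verify that these two functions are actually well-defined and continuous on $\bcwcut$. The digraphs $\sD_1,\sD_2,\sD_3$ each have three vertices with edges chosen so that none contains a loop or a double edge, so each is a simple digraph in the sense used before \refL{Lcut2x}. Therefore \refL{Lcut2x} applies with $F=\sD_i$ and gives, for any representatives $W_1,W_2$ of elements of $\bcw$,
\begin{equation*}
\bigabs{t(\sD_i,W_1)-t(\sD_i,W_2)}\le m_i\,\dcut(W_1,W_2),
\end{equation*}
where $m_i$ is the (finite) number of edges of $\sD_i$. In particular, $t(\sD_i,\cdot)$ descends to a Lipschitz, hence continuous, function on $\bcwcut$.

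Combining these observations, $\bcwp$ is the preimage of the closed set $\{0\}\times\{0\}\subset\bbR^2$ under the continuous map $\oW\mapsto\bigpar{t(\sD_1,\oW)-t(\sD_2,\oW),\,t(\sD_3,\oW)}$, and so is closed in $\bcw$. By \refT{Tcut1} the ambient space $\bcwcut$ is compact, and any closed subspace of a compact metric space is itself compact; this yields compactness of $(\bcwp,\dcut)$. I do not anticipate an obstacle: all the ingredients (the closed-form description of $\bcwp$, the Lipschitz bound on $t(F,\cdot)$, and the compactness of $\bcw$) are already in place, and the argument is purely formal.
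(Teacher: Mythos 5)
Your proof is correct and follows essentially the same route as the paper: it uses \refL{Lcut7} to identify $\bcwp$ as the common level set of the continuous (by \refL{Lcut2x}, Lipschitz) functionals $t(\sD_\ell,\cdot)$ on $\bcw$, hence a closed subset of the compact space $\bcwcut$ given by \refT{Tcut1}. Nothing is missing.
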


\begin{proof}
  $\bcwp$ is a closed subset of $\bcw$ by
  \refL{Lcut7} and the fact that the functionals
  $t(\sD_\ell,\cdot)$ are continuous on $\bcw$. Hence
  the result follows from \refT{Tcut1}.
\end{proof}

\section{Equivalence of kernels}\label{Suniqueness}

Suppose that $(\sss_1,\mu_1)$ and
$(\sss_2,\mu_2)$ are two probability spaces and that
$\gf:\sss_1\to\sss_2$ is a measure preserving map. If
$W:\sss_2\times\sss_2\to\bbR$, we let
$W^\gf:\sss_1\times\sss_1\to\bbR$ be the
function given by $W^\gf(x,y)=W\bigpar{\gf(x),\gf(y)}$.
If $\sss_2$ is an \ops{} with order
$\prec_2$ and $W$ is a kernel on $\sss_2$,
then we can define a partial order $\prec_1$ on
$\sss_1$ by $x\prec_1y\iff W^\gf(x,y)>0$; then 
$\sss_1$ is an \ops, $W^\gf$ is a (strict) kernel on
$\sss_1$, and $\gf:\sss_1\to\sss_2$ is
order preserving.
Furthermore, in this case, if $(X_i)_{i=1}^\infty$ are \iid{} points
in $\sss_1$, then $(\gf(X_i))_{i=1}^\infty$ are \iid{}
points in $\sss_2$, and it follows from \refD{Dpnw} that
\begin{equation}
  \label{pwgf}
P(n,W^\gf)\eqd\pnw
\qquad \text{for every $n\le\infty$};  
\end{equation}
hence
\refT{T1} implies that the kernels $W^\gf$ and
$W$ define the same poset limit $\Pi_W$. As in the case
of graph limits, see \cite{BCLSV1,BR,SJ209,BCL:unique},
this is not quite the only source of non-uniqueness of the
representing kernel $W$, but it is 'almost' so, in a sense made
precise below.

A \emph{Borel space} is a measurable space
$(\sss,\cF)$ that is isomorphic to a Borel subset of
$\oi$, see \eg{} \cite[Appendix A1]{Kallenberg} and \cite{Parthasarathy}. 
In fact,
a Borel space is either isomorphic to $(\oi,\cB)$ or it is
countable infinite or finite. Moreover, every Borel subset of a Polish
topological space (with the Borel $\gs$-field) is a Borel
space.
A \emph{Borel probability space} is a probability space
$\sfmu$ such that $(\sss,\cF)$ is a Borel space.

We state a general equivalence theorem, which is the poset version of
\cite[Theorem 7.1]{SJ209} for graph limits. (This theorem in
\cite{SJ209} is for simplicity stated only for functions
defined on $\oibgl$, but it extends to arbitrary Borel
probability spaces in the same way as here.)
The parts \ref{TUtwin1} and \ref{TUtwin2} are
modelled after similar results for graph limits in
\cite{BCL:unique}.
(For graph limits, \cite{BCL:unique} also gives 
an equivalent
condition with $W_1=V^{\gf_1}$ and
$W_2=V^{\gf_2}$  for some $\gf_1$,
$\gf_2$ and $V$. We conjecture that a similar result is
true for poset limits too, but we have not yet investigated this.)

If $W$ is a kernel (or other function) $\sssq\to\oi$, where
$\sss$ is a probability space, we say following
\cite{BCL:unique} that
$x_1,x_2\in\sss$ are \emph{twins} if
$W(x_1,y)=W(x_2,y)$ and $W(y,x_1)=W(y,x_2)$ for
\aex{} $y\in\sss$. We say that $W$ is
\emph{almost twinfree} if there exists a null set
$N\subset\sss$ such that there are no twins
$x_1,x_2\in\sss\setminus N$ with $x_1\neq x_2$.

\begin{theorem}\label{TU}
Suppose that 
$W_1:\cS_1^2\to\oi$ and  $W_2:\cS_2^2\to\oi$ are two kernels
defined on two \ops{s}
$(\cS_1,\cF_1,\mu_1,\prec_1)$ and
$(\cS_2,\cF_2,\mu_2,\prec_2)$ such
that $(\cS_1,\mu_1)$ and $(\cS_2,\mu_2)$ are Borel spaces, 
and let\/
  $\Pi_1=\Pi_{W_1}$ and\/ $\Pi_2=\Pi_{W_2}$ be the
  corresponding poset limits in $\cpoo$.
Then the following are equivalent.
  \begin{romenumerate}
\item\label{TUgg}
$\Pi_1=\Pi_2$ in $\cpoo$.
\item\label{TUt}
$t(Q,\Pi_{1})=t(Q,\Pi_{2})$ for every poset $Q$.
\item\label{TUgoo}
The \exch{} random infinite posets $P(\infty,W_1)$ and $P(\infty,W_2)$
have the same distribution.
\item\label{TUgn}
The random posets $P(n,W_1)$ and $P(n,W_2)$
have the same distribution for every finite $n$.
\item\label{TUphi}
There exist measure preserving maps $\gf_j:\oi\to\cS_j$, $j=1,2$, such
that
$W_1^{\gf_1}=W_2^{\gf_2}$ \as, \ie{}
$W_1\bigpar{\gf_1(x),\gf_1(y)}=W_2\bigpar{\gf_2(x),\gf_2(y)}$ \aex{}
on $\oi^2$.
\item\label{TUpsi}
There exists a measurable mapping $\psi:\cS_1\times\oi\to\cS_2$ 
that maps $\mu_1\times\gl$ to $\mu_2$
such that
$W_1(x,y)=W_2\bigpar{\psi(x,t_1),\psi(y,t_2)}$ for \aex{}
$x,y\in\cS_1$ and  $t_1,t_2\in\oi$.
(Equivalently, if further
$\pi:\sss\=\sss_1\times\oi\to\sss_1$ is the
projection, then $W_1^\pi=W_2^\psi$ \as{} on
$\sss^ 2$.)
\item\label{TUcut}
$\dcut(W_1,W_2)=0$.
  \end{romenumerate}

If further $W_2$ is almost twinfree, then these are also equvalent to:
\begin{romenumerateq}
\item\label{TUtwin1}
There 
exists a measure preserving map $\gf:\sss_1\to\cS_2$ such
that
$W_1=W_2^{\gf}$ \as, \ie{}
$W_1{(x,y)}=W_2\bigpar{\gf(x),\gf(y)}$ \aex{}
on $\sss_1^2$.  
\end{romenumerateq}

If both $W_1$ and $W_2$ are almost twinfree, then these are also equvalent to:
\begin{romenumerateq}
\item\label{TUtwin2}
There exists a measure preserving map $\gf:\sss_1\to\cS_2$ such
that
$\gf$ is a bimeasurable bijection of
$\sss_1\setminus N_1$ onto $\sss_2\setminus
N_2$ for some null sets $N_1\subset\sss_1$ and
$N_2\subset\sss_2$, and 
$W_1=W_2^{\gf}$ \as, \ie{}
$W_1{(x,y)}=W_2\bigpar{\gf(x),\gf(y)}$ \aex{}
on $\sss_1^2$.  
(If further $(\sss_2,\mu_2)$ has no atoms, for example if
$\sss_2=\oi$, then we may take $N_1=N_2=\emptyset$.)
\end{romenumerateq}
\end{theorem}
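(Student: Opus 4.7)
The plan is to establish the equivalences by a cycle, using results already proved in previous sections for the easy links and an Aldous--Hoover transfer argument for the harder closure. For the core chain (i)$\iff$(ii)$\iff$(iii)$\iff$(iv): the equivalence (i)$\iff$(ii) is immediate from the construction of $\cpq$ as a closure under the functionals $t(Q,\cdot)$; (ii)$\iff$(iv) follows from \refT{Tpoopi} together with \eqref{ml1}--\eqref{ml2}, since the distribution of the labelled random poset $\pnw$ on $[n]$ is entirely determined by $\Pi_W$, while conversely \eqref{tqpin} recovers $t(Q,\Pi)$ from $\E\tinj(Q,\pnw)$; and (iii)$\iff$(iv) is the standard Kolmogorov-type fact that the law of an exchangeable random infinite poset on $\bbN$ is determined by its finite restrictions. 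Among (v), (vi), (vii) I would dispatch (v)$\Rightarrow$(iv) using \eqref{pwgf}, (v)$\Rightarrow$(vii) directly from the definition of $\dcut$, and (vii)$\Rightarrow$(ii) from \refL{Lcut2x}, which gives $|t(Q,\Pi_1)-t(Q,\Pi_2)|\le m\dcut(W_1,W_2)=0$. The equivalence (v)$\iff$(vi) is a reshuffling using the Borel structure: (vi)$\Rightarrow$(v) by taking any measure preserving $\gamma:\oi\to\sss_1\times\oi$ and setting $\gf_1=\pi\circ\gamma$, $\gf_2=\psi\circ\gamma$; (v)$\Rightarrow$(vi) by using a regular conditional distribution for $\gf_1$ (which exists because $\sss_1$ is Borel) to disintegrate and build $\psi$.

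The hard step is closing the cycle back from (iv) to (v) or (vi). Here I would follow the strategy of \cite[Theorem 7.1]{SJ209}: starting from (iii), the distribution of $P(\infty,W_j)$ is an extreme exchangeable law by \refT{TE2}, so the Aldous--Hoover representation, adapted exactly as in the proof of \refT{T1}, produces a canonical kernel $W_0$ on $\oibglx$ that represents the common law. For each $j$ one then realises $W_j$ as $W_0$ transported through a measure preserving map $\gf_j:\oi\to\sss_j$, obtained by disintegrating the joint law of the latent variables $(X_i)$ driving the construction of $P(\infty,W_j)$; the Borel hypothesis on $(\sss_j,\mu_j)$ is essential here for the existence of regular conditional distributions and measurable selections. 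Comparing the two representations yields $W_1^{\gf_1}=W_0=W_2^{\gf_2}$ \aex, i.e.\ (v).

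For the almost twinfree addenda, I would argue as in \cite{BCL:unique}: (vi) gives $W_1(x,y)=W_2(\psi(x,t_1),\psi(y,t_2))$ \aex, so for \aex{} $x\in\sss_1$, two distinct choices $t_1\neq t_1'$ would make $\psi(x,t_1)$ and $\psi(x,t_1')$ twins in $\sss_2$; when $W_2$ is almost twinfree this forces $\psi(x,\cdot)$ to be \aex{} constant, yielding a measure preserving $\gf:\sss_1\to\sss_2$ with $W_1=W_2^{\gf}$ \aex, which is (viii). When both kernels are almost twinfree, the symmetric argument combined with the standard Borel-space fact that a measure preserving map between atomless Borel probability spaces can be made bimeasurably bijective off null sets gives (ix). The main obstacle throughout is the step (iv)$\Rightarrow$(v): it requires careful handling of null-set issues in the Aldous--Hoover representation, mimicking the Lebesgue-point modification already used in the proof of \refT{T1}, and crucially depends on the Borel hypothesis to extract the measurable transfer maps $\gf_j$.
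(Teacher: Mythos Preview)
Your handling of the easy implications (i)$\iff$(ii)$\iff$(iii)$\iff$(iv), and of (v)$\Rightarrow$(iv), (v)$\Rightarrow$(vii), (vii)$\Rightarrow$(ii), matches the paper's argument. Your derivation of (viii) from (vi) via the twin argument is also essentially the paper's.

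The gap is in your closure step (iii)$\Rightarrow$(v). You describe it as: run the Aldous--Hoover \emph{representation} theorem once to produce a canonical $W_0$ on $\oi$, and then for each $j$ ``disintegrate the joint law of the latent variables $(X_i)$'' to produce $\gf_j:\oi\to\sss_j$ with $W_j^{\gf_j}=W_0$ a.e. But there is no such joint law to disintegrate: the variables $X_i\in\sss_j$ used in \refD{Dpnw} and the Aldous--Hoover coordinates $\xi_i\in\oi$ live on different probability spaces, with no a priori coupling. What is actually needed here is not the representation theorem but Hoover's \emph{equivalence} theorem for representations of exchangeable arrays (Kallenberg, Theorem~7.28), which asserts precisely that two functions giving the same array law are related by measure-preserving shifts of the coordinate variables. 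This is the step the paper (following \cite[Theorem~7.1]{SJ209}) invokes, first in the case $\sss_1=\sss_2=\oi$; the extension to general Borel $\sss_j$ is then done by pulling back along measure-preserving $\gamma_j:\oi\to\sss_j$, applying the $\oi$ case, and pushing forward again via a transfer-theorem lemma (\refL{Linv}) that produces an a.e.\ right inverse $\ga:\sss_j\times\oi\to\oi$ of $\gamma_j$. Your separate route (v)$\Rightarrow$(vi) via regular conditional distributions is essentially this same \refL{Linv}, so that part is salvageable, but the core input you are missing is Hoover's equivalence theorem rather than a disintegration argument.

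A smaller point on (viii)$\Rightarrow$(ix): it is not a ``standard Borel-space fact'' that an arbitrary measure-preserving map can be made a bimeasurable bijection off null sets. The paper's argument is specific to the $\gf$ produced in (viii): one shows that $\gf(x)=\gf(x')$ forces $x,x'$ to be twins for $W_1$, so the almost-twinfree hypothesis on $W_1$ makes $\gf$ injective off a null set $N_1$; then the Lusin--Souslin theorem for Borel spaces gives that $\gf(\sss_1\setminus N_1)$ is measurable and $\gf$ is bimeasurable onto its image.
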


\begin{proof}

\ref{TUgg}$\iff$\ref{TUt}. By our definition of $\cpoo\subset\cpq$ in
\refS{Slim}. 

\ref{TUgg}$\iff$\ref{TUgoo}. By \refT{TE}\ref{tex}.

\ref{TUgoo}$\iff$\ref{TUgn}. Obvious.

\ref{TUphi}$\implies$\ref{TUgoo},\ref{TUgn}. 
By \eqref{pwgf}, 
$P(n,W_1)\eqd P(n,W_1^{\gf_1})= P(n,W_2^{\gf_2})\eqd P(n,W_2)$.

\ref{TUpsi}$\implies$\ref{TUgoo},\ref{TUgn}. Similar.

\ref{TUgoo}$\implies$\ref{TUphi},\ref{TUpsi}. 
Consider first the case 
$(\cS_1,\mu_1)=(\cS_2,\mu_2)=(\oi,\gl)$.
In this case, \ref{TUphi} and \ref{TUpsi} follow,
as in the graph case in \cite{SJ209},
from Hoover's equivalence theorem for representations of
exchangeable arrays  in the version by 
\citet[Theorem 7.28]{Kallenberg:exch}; we refer to \cite[Proof of
  Theorem 7.1]{SJ209} for the details rather than copying them here.

For general $\sss_1$ and $\sss_2$, we first note
that since every Borel space is either finite, countably infinite or
(Borel) isomorphic to $\oi$, it is easily seen that there exist
measure preserving maps $\gam_j:\oi\to\cS_j$, $j=1,2$.
(Recall that $\oi$ is equipped with
the Lebesgue measure $\gl$ unless another measure is
explicitly given.)
Let $\tW_j\=W_j^{\gam_j}:\oi^2\to\oi$.
Then, by \eqref{pwgf},
$P(n,W_j)\eqd P(n,\tW_j)$ for $n\le\infty$, and thus
\ref{TUgoo} holds for $\tW_1$ and
$\tW_2$ defined on $\oi$. Hence, by the
special case just treated,
there exist measure preserving functions $\gf'_j:\oi\to\oi$
such that 
$\tW_1^{\gf'_1}=\tW_2^{\gf'_2}$ \aex{},
and thus \ref{TUphi} holds with $\gf_j\=\gam_j\circ\gf'_j$.

Similarly, by \ref{TUpsi} for $\tW_1$ and $\tW_2$,
there exists a measure preserving function $h:\oi^2\to\oi$ such that
$\tW_1(x,y)=\tW_2\bigpar{h(x,z_1),h(y,z_2)}$ for \aex{}
$x,y,z_1,z_2\in\oi$. Apply \refL{Linv} 
below with $(\cS,\mu)=(\cS_1,\mu_1)$ and $\gamma=\gam_1$. 
This yields $\ga:\cS_1\times\oi\to\oi$ that is measure preserving and with
$\gam_1(\ga(s,u))=s$ a.e.
Hence, for \aex{} $x,y\in\cS_1$ and $u_1,u_2,z_1,z_2\in\oi$,
\begin{equation*}
  \begin{split}
W_1(x,y)
&=
W_1\bigpar{\gam_1\circ \ga(x,u_1),\gam_1\circ \ga(y,u_2)}	
=
\tW_1\bigpar{ \ga(x,u_1), \ga(y,u_2)}	
\\&
=
\tW_2\bigpar{h(\ga(x,u_1),z_1),h(\ga(y,u_2),z_2)}
\\&
=
W_2\bigpar{\gam_2\circ h(\ga(x,u_1),z_1),\gam_2\circ h(\ga(y,u_2),z_2)}.
  \end{split}
\end{equation*}
Finally, let $\beta=(\beta_1,\beta_2)$ be a measure preserving map
$\oi\to\oi^2$, 
and define
$\psi(x,t)\=\gam_2\circ{h\bigpar{\ga(x,\beta_1(t)),\beta_2(t)}}$.

\ref{TUphi}$\implies$\ref{TUcut}. Obvious by \eqref{dcut0} and \refL{Lcut3}.

\ref{TUcut}$\implies$\ref{TUt}. 
By \refL{Lcut2x}.

\ref{TUpsi}$\implies$\ref{TUtwin1}. 
Since, for \aex{} $x,y,t_1,t_2,t_1'$,
\begin{equation*}
W_2\bigpar{\psi(x,t_1),\psi(y,t_2)}=
W_1(x,y)=W_2\bigpar{\psi(x,t_1'),\psi(y,t_2)}  
\end{equation*}
and
\begin{equation*}
W_2\bigpar{\psi(y,t_2),\psi(x,t_1)}=
W_1(y,x)=W_2\bigpar{\psi(y,t_2),\psi(x,t_1')}, 
\end{equation*}
and $\psi$ is  
measure preserving, it follows that for \aex{}
$x,t_1,t_1'$, $\psi(x,t_1)$ and $\psi(x,t_1')$
are twins for $W_2$. If $W_2$ is almost twin-free, with exceptional null
set $N$, then further
$\psi(x,t_1),\psi(x,t_1')\notin N$ for \aex{}
$x,t_1,t_1'$, since $\psi$ is \mpx, and consequently
$\psi(x,t_1)=\psi(x,t_1')$ for \aex{} $x,t_1,t_1'$.
It follows that we can choose a fixed $t_1'$ (almost every choice
will do) such that 
$\psi(x,t)=\psi(x,t_1')$ for \aex{} $x,t$. Define
$\gf(x)\=\psi(x,t_1')$.
Then $\psi(x,t)=\gf(x)$ for \aex{} $x,t$, which in particular
implies that $\gf$ is \mpx, and \ref{TUpsi}
yields $W_1(x,y)=W_2\bigpar{\gf(x),\gf(y)}$ a.e.

\ref{TUtwin1}$\implies$\ref{TUtwin2}. 
Let $N'\subset\sss_1$ be a null set such that if
$x\notin N'$, then $W_1(x,y)=W_2(\gf(x),\gf(y))$
for \aex{} $y\in\sss_1$.
Similarly, 
let $N''\subset\sss_1$ be a null set such that if
$x\notin N''$, then $W_1(y,x)=W_2(\gf(y),\gf(x))$
for \aex{} $y\in\sss_1$. If
$x,x'\in\sss_1\setminus(N'\cup N'')$ and
$\gf(x)=\gf(x')$, then $x$ and $x'$ are twins
for $W_1$. Consequently, if $W_1$ is almost twinfree
with exceptional null set $N$, then $\gf$ is injective
on $\sss_1\setminus N_1$ with $N_1\=N'\cup N''\cup N$. 
Since $\sss_1\setminus N_1$ and
$\sss_2$ are Borel spaces, the injective map
$\gf:\sss_1\setminus N_1\to\sss_2$ has measurable
range and is a bimeasurable bijection 
$\gf:\sss_1\setminus N_1\to\sss_2\setminus N_2$
for some measurable set $N_2\subset\sss_2$. Since
$\gf$ is measure preserving, $\mu_2(N_2)=0$. 
 
If $\sss_2$ has no atoms, we may take an uncountable null set
$N_2'\subset\sss_2\setminus N_2$. Let
$N_1'\=\gf\qw(N_2')$. Then $N_1\cup N_1'$ and
$N_2\cup N_2'$ are uncountable Borel spaces so there is a
bimeasurable bijection $\psi:N_1\cup N_1'\to N_2\cup
N_2'$. Redefine $\gf$ on $N_1\cup N_1'$ so that
$\gf=\psi$ there; then $\gf$ becomes a bijection $\sss_1\to\sss_2$.
\end{proof}

\begin{lemma}
  \label{Linv}
Suppose that $(\cS,\mu)$ is a Borel probability space and that
$\gamma:\oi\to\cS$ is a measure preserving function.
Then there exists a measure preserving function $\ga:\cS\times\oi\to\oi$
such that $\gamma\bigpar{\ga(s,y)}=s$ for $\mu\times \gl$-\aex{}
$(s,y)\in\cS\times\oi$. 
\end{lemma}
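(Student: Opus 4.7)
The plan is to disintegrate the joint law of $(\gamma(Y), Y)$ against its $\cS$-marginal and then convert the resulting kernel into a function via the quantile transform; informally, I want $\ga(s, \cdot)$ to sample uniformly from the fiber $\gamma\qw(\set s)$. Concretely, let $\nu$ be the Borel probability measure on $\cS \times \oi$ given by the law of $\bigpar{\gamma(Y), Y}$ when $Y \sim \gl$. Then $\nu$ is supported on the graph of $\gamma$ and its marginals are $\mu$ on $\cS$ (by the measure-preserving hypothesis on $\gamma$) and $\gl$ on $\oi$.

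Because $(\cS, \mu)$ is a Borel probability space, the disintegration theorem provides a Borel-measurable kernel $s \mapsto \nu_s$, where each $\nu_s$ is a probability measure on $\oi$, such that
\begin{equation*}
  \nu(B \times A) = \int_B \nu_s(A) \dd\mu(s)
\qquad \text{for measurable } B \subseteq \cS,\; A \subseteq \oi,
\end{equation*}
and, because $\nu$ sits on the graph of $\gamma$, $\nu_s$ is concentrated on $\gamma\qw(\set s)$ for $\mu$-\aex{} $s$. Next, turn this kernel into a function by the quantile construction: set $F_s(t) \= \nu_s([0,t])$ and $\ga(s, y) \= \inf\set{t \in \oi : F_s(t) \ge y}$. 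Joint measurability follows from the identity $\set{(s,y) : \ga(s,y) \le t} = \set{(s,y): y \le F_s(t)}$ together with Borel measurability of $s \mapsto F_s(t)$, and for each fixed $s$ the pushforward of $\gl$ under $\ga(s, \cdot)$ is $\nu_s$.

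Both conclusions then follow routinely. Since $\nu_s$ is concentrated on $\gamma\qw(\set s)$ for $\mu$-\aex{} $s$, we get $\gamma(\ga(s, y)) = s$ for $\gl$-\aex{} $y$ for $\mu$-\aex{} $s$, and hence for $\mu \times \gl$-\aex{} $(s, y)$ by Fubini. For the measure-preserving property, for any Borel $A \subseteq \oi$,
\begin{equation*}
(\mu \times \gl)\bigpar{\ga\qw(A)} = \int_\cS \nu_s(A) \dd\mu(s) = \nu(\cS \times A) = \gl(A),
\end{equation*}
the last equality being the $\oi$-marginal of $\nu$. The only nontrivial ingredient is the existence of a Borel-measurable regular conditional probability $s \mapsto \nu_s$; this is exactly where the Borel-space hypothesis on $(\cS, \mu)$ is essential, and without it one could not in general find such a kernel. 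As a concrete alternative avoiding abstract disintegration, one may use that a Borel space is either countable or Borel-isomorphic to $\oi$ and build $\ga$ by hand on each fiber, but the disintegration route is cleaner.
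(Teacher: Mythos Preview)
Your proof is correct. The paper's proof invokes Kallenberg's transfer theorem \cite[Theorem 6.10]{Kallenberg}: setting $\xi=\gamma$, $\eta=\mathrm{id}$ on $(\oi,\gl)$ and $\txi=\mathrm{id}$ on $(\cS,\mu)$, the transfer theorem produces $\ga:\cS\times\oi\to\oi$ with $(\txi,\ga(\txi,U))\eqd(\xi,\eta)$, and then $\xi=\gamma(\eta)$ forces $\gamma\circ\ga=\mathrm{id}$ a.e. Your argument is essentially a direct proof of that transfer theorem in this special case: disintegrate the law of $(\gamma(Y),Y)$ over its first coordinate and realize each fiber measure via the quantile map. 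So the two routes coincide in substance; the paper outsources the work to a black-box citation, while you carry it out by hand, which makes your version more self-contained. One small remark: the Borel hypothesis on $\cS$ is used not so much for the \emph{existence} of the regular conditional distribution (that needs only that the target $\oi$ is Polish) as for the measurability of the diagonal in $\cS\times\cS$, which is what lets you conclude that $\nu_s$ is concentrated on $\gamma\qw(\set s)$.
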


\begin{proof}
Let $\eta:\oi\to\oi$ and $\txi:\cS\to\cS$ be the identity maps
  $\eta(x)=x$, $\txi(s)=s$, and let $\xi=\gamma:\oi\to\cS$.
Then $(\xi,\eta)$ is a pair of random variables, 
defined on the probability space $(\oi,\gl)$,
with values in $\cS$ and $\oi$, respectively;
further, $\txi$ is a random variable defined on $(\cS,\mu)$ with
  $\txi\eqd\xi$.
By the transfer theorem \cite[Theorem 6.10]{Kallenberg}, there
  exists a measurable function $\ga:\cS\times\oi\to\oi$ such that 
if $\teta(s,y)\=\ga(\txi(s),y)=\ga(s,y)$, then $(\txi,\teta)$ is
  a pair of random 
  variables defined on $\cS\times\oi$ with
$(\txi,\teta)\eqd(\xi,\eta)$.
Since $\xi=\gamma(\eta)$, this implies $\txi=\gamma(\teta)$ \as, 
and thus $s=\txi(s)=\gamma\bigpar{\ga(s,y)}$ \as
\end{proof}

\section{More on the cut metric}\label{Scut2}

\begin{theorem}
    \label{Tcutw}
Let $W$ and $W_1,W_2,\dots$ be kernels on \ops{s}
$\sss,\sss_1,\sss_2,\dots$. Then, as \ntoo,
$\Pi_{W_n}\to\piw\iff\dcut(W_n,W)\to0$.
In other words, 
the mapping $W\mapsto\piw$ is a
homeomorphism of $(\bcwp,\dcut)$ onto $\cpoo$.
\end{theorem}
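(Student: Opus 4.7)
The plan is to show that $W\mapsto \piw$ is a continuous bijection between the two compact metric spaces $(\bcwp,\dcut)$ and $\cpoo$, and then invoke the standard fact that a continuous bijection between compact Hausdorff spaces is a homeomorphism. Once this is established, the equivalence $\Pi_{W_n}\to\piw\iff\dcut(W_n,W)\to 0$ is just the sequential characterization of a homeomorphism.

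First I would verify that the map $\Phi\colon W\mapsto \piw$ is well defined and bijective on $\bcwp$. Surjectivity onto $\cpoo$ is given by \refT{T1}. For injectivity, suppose $\piw_1=\piw_2$. By \refT{T1+}\ref{t1+oi} we may assume both $W_1$ and $W_2$ are kernels on $\oibglx$-type \ops{s}, which are Borel probability spaces, and then \refT{TU} (\ref{TUgg}$\iff$\ref{TUcut}) gives $\dcut(W_1,W_2)=0$, i.e.\ $W_1\cong W_2$ in $\bcwp$. Hence $\Phi$ is a bijection.

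Next I would check continuity of $\Phi$. If $\dcut(W_n,W)\to 0$, then for every finite poset $Q$, \refL{Lcut2x} combined with \refT{T1}\ref{T1b} (which states $t(Q,\piw)=t(Q,W)$) yields
\begin{equation*}
\bigabs{t(Q,\piw_n)-t(Q,\piw)}=\bigabs{t(Q,W_n)-t(Q,W)}\le m_Q\,\dcut(W_n,W)\to 0,
\end{equation*}
where $m_Q$ is the number of comparable pairs in $Q$. By \refT{T1ox}, this is exactly what convergence $\piw_n\to\piw$ in $\cpoo$ means, so $\Phi$ is continuous.

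Finally, $\bcwp$ is compact by \refT{Tcut2}, and $\cpoo$ is compact and metric (hence Hausdorff) by construction. A continuous bijection from a compact space to a Hausdorff space is automatically a homeomorphism, so $\Phi$ is a homeomorphism and in particular $\piw_n\to\piw \iff \dcut(W_n,W)\to 0$. The main thing to be careful about is not any hard estimate but rather keeping the bookkeeping straight between a kernel $W$ and its class in $\bcwp$: the forward implication (cut-convergence implies limit-convergence) is where all the analytic content sits, and it is handled entirely by the sampling estimate \refL{Lcut2x}; the reverse implication is pure soft topology once one has compactness of $\bcwp$ from \refT{Tcut2} and injectivity from the equivalence theorem \refT{TU}.
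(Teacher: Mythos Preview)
Your proposal is correct and follows essentially the same route as the paper: continuity via \refL{Lcut2x} and \refT{T1ox}, surjectivity via \refT{T1}, injectivity via \refT{TU}, and compactness of $\bcwp$ via \refT{Tcut2}, concluding that a continuous bijection from a compact space to a Hausdorff space is a homeomorphism. The only cosmetic difference is that for injectivity the paper invokes the definition \eqref{wp2} of $\bcwp$ (which already represents each class by a kernel on $\oi$, hence a Borel space) rather than passing through \refT{T1+}\ref{t1+oi} as you do; both ways land in the hypotheses of \refT{TU}.
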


\begin{proof}
  The mapping $W\mapsto\piw\in\cpoo$ is well-defined and
  continuous on $\bcwp$ by \refL{Lcut2x} and the
  construction of $\cpoo$ (see \refT{T1ox});
  further, the mapping is surjective by \refT{T1} and it is
  injective by \refT{TU} (\ref{TUgg}$\implies$\ref{TUcut}), using the
  definition \eqref{wp2}.
Since $\bcwp$ is compact by \refT{Tcut2},
  the mapping is thus a homeomorphism.
\end{proof}

\begin{proof}[Proof of \refT{Tcutp}]
Let $W_n=W_{P_n}$. Thus $\Pi_{P_n}\=\Pi_{W_n}$ and
$t(Q,P_n)=t(Q,\Pi_{P_n})=t(Q,\Pi_{W_n})$ for every
  $Q\in\cP$ by \refE{EP}. 
It follows from Theorems \refand{T1old}{T1ox} that
$P_n\to\Pi\iff \Pi_{W_n}\to\Pi$, and the
result follows from \refT{Tcutw}.
\end{proof}

\section{Further examples}\label{Sex}

\begin{example}
  \label{Etotal}
For each finite $n$, all totally ordered sets with $n$ elements are
isomorphic, and there is thus a unique unlabelled totally ordered
poset in $\cP_n$ which we denote by $T_n$. Let $\sfmuu$ be a totally
ordered set with a continuous probability measure $\mu$ (\ie, a
probability measure such that $\mu\set x=0$ for every $x\in\sss$), and
let $W(x,y)=\ett{x<y}$ as in \refE{EW1}. Since $\mu$ is continuous,
the random points $X_i$ in \refD{Dpnw} are (\as) distinct, and thus,
see \refE{EW1}, $\pnw$ is isomorphic to a subset of $\sss$ and thus
totally ordered. In other words, $\pnw=T_n$ as unlabelled posets. (As
labelled posets, $\pnw\eqd\widehat T_n$, which is obtained by applying a
random permutation to $\nn$ with the usual order.) By \refT{T1}\ref{T1a}, thus
$T_n\to\piw$, which shows that $\piw$ does not depend on the choices
of $\sss$ and $\mu$. We write $\pit$ for this poset limit and have
thus shown that there exists a (unique) poset limit $\pit\in\cpoo$
such that
$T_n\to\pit$ and $P(n,\pit)=T_n$ for all finite $n$.
We may call $\pit$ the \emph{total poset limit}.

It is convenient to choose $\sss$ as \oilm; we then see that
$P(\infty,\pit)$ is the random infinite total order defined by a
sequence of \iid{} random points in $\oi$ with the standard order.

Note that $\mu$ has to be continuous in this example; otherwise (\ie, if
$\mu$ has an atom), there will (\as) be repetitions in
$X_1,X_2,\dots$ and thus incomparable points in $\poow$ (and with
positive probability in $\pnw$ for finite $n\ge2$); hence
$\poopiw=\poow\not\eqd\poox{\pit}$ and
$\piw\neq\pit$ by \refT{TE}\ref{tex}.
In particular, $\pip$ defined in \refE{EP} for a
finite totally ordered set $P=T_m$ does \emph{not} equal
$\pit$. (Although, as a consequence of \eqref{tp}, $\Pi_{T_m}\to\pit$
in $\cpoo$ as $m\to\infty$.) 
\end{example}

\begin{example}
  \label{E0}
The other extreme is the poset where $x\not<y$ for all $x,y$; we call
these posets trivial, and let $E_n$ denote the (unique) unlabelled
trivial poset with $|E_n|=n$.
Then, trivially, $t(Q,E_n)=0$ for every finite poset $Q$ that is not
itself trivial, while $t(E_m,E_n)=1$ for all $m$ and $n$. Consequently
the sequence $(E_n)$ converges, and the limit is a poset limit
$\pie\in\cpoo$ with
\begin{equation}\label{e0}
  t(Q,\pie)=
  \begin{cases}
	1,& Q= E_m \text{ for some $m$},\\
0,& \text{otherwise}.
  \end{cases}
\end{equation}
Taking $P=E_n$ in \refE{EP}, we see further by \eqref{tp}
that $\Pi_{E_n}=\pie$ for every $n$.
Trivially, $\widehat E_n=E_n$, and by \eqref{tqpi}
$P(\infty,\pie)$ is the trivial infinite poset on $\bbN$.
Similarly,
$P(n,\pie)=P(\infty,\pie)\rest n$ is trivial, so $P(n,\pie)=E_n$.

Note also that if $\sss$ is any \ops{} and $W=0$, which always is a
kernel, then 
$\pnw$ is trivial for all $n<\infty$, and by
\refT{T1}\ref{T1a} or \ref{T1b},
$\piw=\pie$. (This explains our notation $\pie$.)
\end{example}

\begin{example}
  \label{E2dim}
Let $\sss=\oi^2$ with \lm{} and the product order
$(x_1,x_2)<(y_1,y_2)$ if $x_1<y_1$ and $x_2<y_2$. Again, let
$W(x,y)=\ett{x<y}$ as in \refE{EW1}. Then $\pnw$ is the poset defined
by $n$ random points in $\oi^2$, which also can be described as the
intersection of two independent random total orders on $\nn$.
\end{example}

\begin{example}
Let $\gnp$ denote the random graph with $n$ vertices \set{1,\dots,n}
where each possible edge $ij$ appears with probability $p$,
independently of all other edges. We make \gnp{} into a (random) poset
by directing each edge from the smaller endpoint to the larger, and
then taking the transitive closure. In other words, $i\prec j$ in
\gnp{} if and only if there is an increasing path
$i=i_1,i_2,\dots,i_n=j$ in \gnp. We use \gnp{} to denote this random
poset too.

It can be shown, see \cite{PittelT} and the references therein, that if
$p\to0$ and $(j-i)/(\frac1p\log\frac1p)\to c$, then 
$\P(i\prec j)\to0$ if $c<1$ and 
$\P(i\prec j)\to1$ if $c>1$.
Assume now that $n\to\infty$ and $p\to0$ such that $pn/\log n\to a\in
[0,\infty]$. It then follows easily that for every finite poset $Q$,
using \eqref{tfw1} and \eqref{t1b},
\begin{equation*}
  \E t(Q,\gnp)\to t(Q,W_a)=t(Q,\Pi_{W_a}),
\end{equation*}
where $W_a$ is the kernel on $\oibgl$ given by
$W_a(x,y)\=\ett{y-x>a\qw}$.
(In particular, $W_a=0$ if $a\le1$.)
By \refT{T2}, thus $\gnp\dto\Pi_{W_a}$;
since $\Pi_{W_a}$ is non-random, this means
$\gnp\pto\Pi_{W_a}$.

In particular, if $a\le1$, then $\gnp\pto\pie$, see \refE{E0}.
The other extreme is $a=\infty$; then $W_a(x,y)=\ett{y>x}$ on the
totally ordered set $\oi$, so $\gnp\pto\pit$, see \refE{Etotal}.
\end{example}

\begin{example}
Let $\sss=\set{(x,y):0\le x\le y\le 1}$ with the partial order
$(x_1,y_1)\prec(x_2,y_2)$ if $y_1<x_2$. We can interpret $\sss$ as the
set of closed intervals in \oi, with $I_1\prec I_2$ if $I_1$ lies
entirely to the left of $I_2$. Any probability measure $\mu$ on $\sss$
thus defines a distribution of random intervals, and the kernel 
$W(\mathbf x,\mathbf y)\=\ett{\mathbf x\prec\mathbf y}$ as in
\refE{EW1} yields random posets $\pnw$, and a poset limit $\Pi$.

We note that although it is natural to represent $\Pi$ by the kernel
$W$ on $(\sss,\mu)$, $\Pi$ can also be represented by a kernel on
$\oibgl$. (Thus 
\refP{P1} has a positive answer in this case.)
To see this, we construct a \mpx{} map $\gf:(\oi,\gl)\to(\sss,\mu)$
such that $\gf(s)\prec\gf(t)\implies s<t$; then $W^\gf$ is a kernel on
$\oi$ that represents $\Pi$.
We may construct $\gf$ by first partitioning $\sss$ into
$\sss_0\=\set{(x,y):x\le y<\xfrac12}$,
$\sss_{01}\=\set{(x,y):x<\xfrac12\le y}$ and
$\sss_1\=\set{(x,y):\xfrac12\le x\le y}$, and a corresponding partitioning
of $\oi$ into $I_0\=[0,\mu(\sss_0))$,
$I_{01}\=[\mu(\sss_0),1-\mu(\sss_1))$ and
$I_{1}\=[1-\mu(\sss_1),1]$.
Noting that all elements of
$\sss_{01}$ are incomparable, we define $\gf$ on $I_{01}$ as any \mpx{}
map $I_{01}\to\sss_{01}$. We then continue recursively and define
$\gf:I_0\to\sss_0$ and $I_1\to\sss_1$ by partitioning $\sss_0$ and
$\sss_1$ into three parts each, and so on. (In the $k$th stage, the
partitioning is according to the $k$th binary digit of $x$ and $y$.)
Let $\gD\=\set{(x,x)}$ be the diagonal in $\sss$. If $\mu(\gD)=0$,
then the recursive procedure just described defines $\gf$ at least
\aex{} on $\oi$. If $\mu(\gD)>0$, there will remain a Cantor like subset
of $\oi$ of measure $\mu(\gD)$; the construction then is completed by
mapping this set to $\gD$ by an increasing \mpx{} map.
\end{example}

\section{Poset limits as digraph limits}\label{SD}

As said repeatedly, we can regard posets as digraphs, which 
yields an inclusion mapping $\cP\to\cD$.
We saw in \refS{Slim} that this mapping extends to a (unique) continuous
inclusion mapping $\cpq\to\cdq$; we may thus regard 
$\cpq$ as a compact subset of $\cdq$, with
$\cpoo$ a compact subset of $\cdoo$. 
We can now characterize the subset $\cpoo$ of $\cdoo$ in
several ways.

We first recall that, as shown in \citet{SJ209}, the digraph limits in
$\cdoo$ can be represented by
quintuples
$\bW=(W_{00},W_{01},W_{10},W_{11},\wo)$ where  
$\wab:\oi^2\to\oi$ and $\wo:\oi\to\setoi$ are measurable functions 
such that $\sum_{\ga,\gb=0}^1\wab(x,y)=1$ and $\wab(x,y)=W_{\gb\ga}(y,x)$
for $\ga,\gb\in\setoi$ and $x,y\in\oi$.
Let $\WW$ be the set of all such quintuples.
For $\bW\in\WW$, we define a random infinite digraph
$\gbwoo$
by specifying its edge indicators $I_{ij}$
as follows (\cf{} \refD{Dpnw}): we
first choose a sequence $X_1,X_2,\dots$ of \iid{} random variables
uniformly distributed on $\oi$, and then, given this sequence,
let $I_{ii}=\wo(X_i)$ and 
for each pair $(i,j)$ with $i<j$ choose $I_{ij}$ and $I_{ji}$ at random
such that
\begin{equation}\label{dirw}
 \P(I_{ij}=\ga \text{ and } I_{ji}=\gb)=\wab(X_i,X_j),
\qquad \ga,\gb\in\setoi;
\end{equation}
this is done independently for
all pairs $(i,j)$ with $i<j$ (conditionally given \set{X_k}).
%
The infinite random digraph $\gbwoo$ is \exch, and it is
shown in \cite{SJ209}, by digraph analogues of Theorems
\refand{TE2}{TE} above, that its distribution is an extreme
point in the set of \exch{} distributions and that it
corresponds to a digraph limit $\ubw$; for example, 
$\gbwn\=\gbwoo\rest n\to\ubw$ in $\cdq$ \as

\begin{theorem}
  \label{TPQ}
Let $\gG\in\cdoo$ be a digraph limit. Then the following are
equivalent.
\begin{romenumerate}
  \item $\gG\in\cpoo$, \ie, $\gG$ is a poset limit.
\item
$\tind(F,\gG)=0$ for every finite digraph $F$ that is not a poset.
\item
$\tind(\sC_1,\gG)=\tind(\sC_2,\gG)=\tind(\sC_3,\gG)=\tind(\sP_2,\gG)=0$.
\item
If $\bW=(W_{00},W_{01},W_{10},W_{11},\wo)$ is some (any) quintuplet
representing $\gG$, then $w=0$ a.e., $W_{11}=0$ \aex, and 
$\set{(x,y,z):W_{10}(x,y)>0 \text{ and }W_{10}(y,z)>0\text{ 
and }W_{10}(x,z)<1}$ is a null set in $\oi^3$.
\item
There exists a quintuplet
$\bW=(W_{00},W_{01},W_{10},W_{11},\wo)$ 
representing $\gG$ with  $w=0$, $W_{11}(x,y)=0$, $W_{10}(x,x)=0$, and 
$W_{10}$ satisfying \eqref{w2}.
\end{romenumerate}
\end{theorem}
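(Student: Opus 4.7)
The plan is to establish the equivalences in two blocks. The first block, (i)$\Leftrightarrow$(ii)$\Leftrightarrow$(iii), rests on the forbidden-induced-subgraph criterion \refL{L1} for posets together with the digraph analogue of \refT{TE} from \cite{SJ209}. The second block, (i)$\Leftrightarrow$(iv)$\Leftrightarrow$(v), expresses each of the four relevant induced-subgraph densities as an integral against the quintuple $\bW=(W_{00},W_{01},W_{10},W_{11},\wo)$ representing $\gG$, and the passage from the \aex{} to the pointwise conditions uses the Lebesgue-point surgery from the proof of \refT{T1}.

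For (i)$\Rightarrow$(ii): if $\gG=\lim P_n$ in $\cpq$ with each $P_n$ a finite poset, then $\tind(F,P_n)=0$ for every non-poset digraph $F$ (an induced sub-digraph of a poset is a poset), so continuity of $\tind(F,\cdot)$ on $\cdq$ gives $\tind(F,\gG)=0$. The implication (ii)$\Rightarrow$(iii) is immediate since $\sC_1,\sC_2,\sC_3,\sP_2$ are not posets by \refL{L1}. For (iii)$\Rightarrow$(i) I would use the digraph analogue of \refT{TE}\ref{tex} to pick a representing $\bW$ and form $\gbwn=\gbwoo\rest n$, with $\gbwn\asto\gG$ in $\cdq$. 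For each $F\in\{\sC_1,\sC_2,\sC_3,\sP_2\}$ and each $n\ge|F|$, $\E\tind(F,\gbwn)=\tind(F,\gG)=0$, so \as{} $\gbwn$ has no induced copy of $F$; intersecting over these countably many events, \refL{L1} gives that \as{} $\gbwn$ is a poset for every $n$. Since $\cpq$ is closed in $\cdq$ and $\gbwn\in\cP$ \as{} with $\gbwn\to\gG$ in $\cdq$, we conclude $\gG\in\cpq$; as $|\gG|=\infty$, $\gG\in\cpoo$.

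For (i)$\Leftrightarrow$(iv), write the four relevant induced-subgraph densities as integrals against $\bW$: $\tind(\sC_1,\gG)=\int\wo$, $\tind(\sC_2,\gG)=\int W_{11}$, and, after reducing to $\wo=W_{11}=0$, $\tind(\sP_2,\gG)=\int W_{10}(x,y)W_{10}(y,z)W_{00}(x,z)\,dx\,dy\,dz$ together with $\tind(\sC_3,\gG)=\int W_{10}(x,y)W_{10}(y,z)W_{10}(z,x)\,dx\,dy\,dz$. By the first block, (i) is equivalent to simultaneous vanishing of these four integrals. The first two vanish iff $\wo=0$ and $W_{11}=0$ \aex. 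Assuming these, the \aex{} transitivity condition in (iv) kills both remaining integrands: where $W_{10}(x,y)W_{10}(y,z)>0$, (iv) gives $W_{10}(x,z)=1$ \aex, and then $W_{00}(x,z)=1-W_{10}(x,z)-W_{10}(z,x)\ge0$ forces $W_{10}(z,x)=0$ and $W_{00}(x,z)=0$. Conversely, simultaneous \aex{} vanishing of the $\sP_2$ and $\sC_3$ integrands yields, wherever $W_{10}(x,y)W_{10}(y,z)>0$, both $W_{10}(z,x)=0$ (from $\sC_3$) and $W_{10}(x,z)+W_{10}(z,x)=1$ (from $\sP_2$), hence $W_{10}(x,z)=1$, which is the transitivity condition of (iv). Independence of (iv) from the choice of $\bW$ is automatic: if some $\bW$ satisfies (iv), then (i) holds, and hence every representing $\bW$ satisfies (iv).

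For (iv)$\Rightarrow$(v), I would modify a quintuple satisfying (iv) on a null set. Set $\wo\equiv0$ and $W_{11}\equiv0$ pointwise. Then apply the Lebesgue-point regularization from the proof of \refT{T1} to $W_{10}$: define $W_1$ as the $\liminf$ of the $\eps$-averages of $W_{10}$ over squares of side $2\eps$, let $E$ be the set of Lebesgue points of $W_1$ in $\oi^2$, put $W_{10}^*(x,y):=W_1(x,y)\ett{(x,y)\in E}$, and set $W_{10}^*(x,x):=0$. Exactly the argument in the proof of \refT{T1} shows $W_{10}^*=W_{10}$ \aex{} and that \eqref{w2} holds pointwise for $W_{10}^*$; taking $z=x$ in \eqref{w2} together with $W_{10}^*(x,x)=0$ additionally forces $W_{10}^*(x,y)W_{10}^*(y,x)=0$ pointwise. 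Setting $W_{01}^*(x,y):=W_{10}^*(y,x)$ and $W_{00}^*:=1-W_{10}^*-W_{01}^*$, all five functions take values in $\oi$, yielding a valid quintuple satisfying (v) pointwise; since every modification was \aex, this new quintuple still represents $\gG$. The implication (v)$\Rightarrow$(iv) is trivial. The main obstacle here is the pointwise, not merely \aex, upgrading of the transitivity \eqref{w2} for $W_{10}$, and this is exactly the content of the Lebesgue-point argument already carried out in the proof of \refT{T1}, which we invoke verbatim.
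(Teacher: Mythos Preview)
Your argument is correct, and the overall strategy---reduce everything to the vanishing of $\tind$ for the four forbidden digraphs via \refL{L1}, then translate this into integral conditions on the representing quintuple---matches the paper. There are, however, two structural differences worth noting.

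First, for (iii)$\Rightarrow$(i) you argue directly: from $\E\tind(F,\gbwn)=\tind(F,\gG)=0$ you conclude $\gbwn$ is \as{} a poset, and then use $\gbwn\to\gG$ and closedness of $\cpq$. The paper instead routes this through (iv): it computes the integrals giving (iv), then uses (iv) to show $\gbwoo$ is \as{} a poset. Your route is a little cleaner, since it stays with $\tind$ throughout and avoids the implicit passage between $t$ and $\tind$ that the paper's computation uses.

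Second, and more substantively, for the existence of a pointwise-good quintuple (v) the paper does \emph{not} redo the Lebesgue-point surgery. Having already established (i), it simply invokes \refT{T1+}\ref{t1+oi} to obtain a (strict) kernel $W$ on $\oibglx$ representing $\gG$, and then sets $W_{10}:=W$, $W_{01}(x,y):=W(y,x)$, $W_{11}:=0$, $W_{00}:=1-W_{10}-W_{01}$, $\wo:=0$; the kernel axioms \eqref{w1}--\eqref{w2} immediately give all the pointwise conditions in (v). Your approach---modifying an arbitrary $\bW$ satisfying (iv) via the Lebesgue-point construction---also works (note that (iv) together with $W_{11}=0$ \aex{} yields the \aex{} acyclicity condition \eqref{wc3} as well as \eqref{www}, so the full argument from the proof of \refT{T1} is available). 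The paper's route is shorter because the Lebesgue-point work has already been packaged into \refT{T1+}; yours is more self-contained and shows directly that (iv)$\Rightarrow$(v) without passing back through (i).
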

  
\begin{proof}
  (i)$\implies$(ii).
If $P$ is a poset regarded as a digraph, then every induced
subgraph is a poset. Thus, if
$F\in\cD\setminus\cP$, then
$\tind(F,P)=0$ for all $P\in\cP$, and by
continuity, $\tind(F,\gG)=0$ for all
$\gG\in\cpoo$ too.

(ii)$\implies$(iii). Trivial.

(iii)$\implies$(iv). Let 
$\bW=(W_{00},W_{01},W_{10},W_{11},\wo)$ represent the digraph limit
$\gG$ as above. Then, by the digraph version of
\eqref{tqpi}, for once regarding
$\sC_1,\sC_2,\sC_3,\sP_2$ as labelled digraphs (in the obvious way),
\begin{equation*}
  0=t(\sC_1,\gG)=\P(\sC_1\subset  \gbwoo)=\P(I_{11}=1)=\E w(X_1).
\end{equation*}
Thus $w=0$ a.e.
Similarly,
\begin{equation*}
  0=t(\sC_2,\gG)=\P(\sC_2\subset \gbwoo)=\P(I_{12}=I_{21}=1)=\E W_{11}(X_1),
\end{equation*}
and thus $W_{11}=0$ a.e.
Finally, using $W_{11}=0$, 
\begin{equation*}
  \begin{split}
  0&=t(\sC_3,\gG)+t(\sP_2,\gG)
=\P(\sC_3\subset \gbwoo)+\P(\sP_2\subset \gbwoo)
\\&=\P(I_{12}=I_{23}=1,\, I_{31}=0)
\\&
=\E W_{10}(X_1,X_2)	W_{10}(X_2,X_3)(1-W_{10}(X_3,X_1)).
  \end{split}
\end{equation*}

(iv)$\implies$(i). 
By the calculations in the preceding step, 
\begin{multline*}
  \P(\sC_1\subset \gbwoo)
=\P(\sC_2\subset \gbwoo)
\\
=\P(\sC_3\subset \gbwoo)+\P(\sP_2\subset \gbwoo)
=0.	
\end{multline*}
By exchangeability, $\gbwoo$ thus \as{} does not have
any induced subgraph $\sC_1$, $\sC_2$,
$\sC_3$ or $\sP_2$, and thus \refL{L1}
shows that $\gbwoo$ and
its induced subgraphs $\gbwn$ are posets a.s.
Since $\gG=\lim\gbwn$ \as, $\gG$ is a limit of posets.

(i)$\implies$(v). 
By Theorems \refand{T1}{T1+}\ref{t1+oi}, we
can represent $\gG$ regarded as a poset limit by a kernel $W$ on
$\oib$ (with some partial order $\prec$). We define
$W_{10}(x,y)=W(x,y)$,  $W_{01}(x,y)=W(y,x)$, 
$W_{11}(x,y)=0$, 
$W_{00}(x,y)=1-W(x,y)-W(y,x)$
and $w(x)=0$. 
(Alternatively, we can show (iv)$\implies$(v) by modifying $\bW$ on a
null set similarly to the proof of \refT{T1}.)

(v)$\implies$(iv). Trivial.
\end{proof}

\section{Further comments}\label{Sfurther}

One might ask for topological properties of the compact metric space
$\cpoo$. We only give one simple result here.

\begin{theorem}\label{Tcontr}
  $\cpoo$ is a contractible topological space, and in particular
  connected and simply connected.
\end{theorem}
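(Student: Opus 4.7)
The plan is to exhibit an explicit contraction of $\cpoo$ onto the trivial poset limit $\pie$ defined in \refE{E0}; contractibility will then yield connectedness and simple connectedness at once. So I will construct a continuous map $H:\cpoo\times\oi\to\cpoo$ with $H(\Pi,0)=\Pi$ and $H(\Pi,1)=\pie$.

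Given $\Pi\in\cpoo$ and $t\in\oi$, pick any kernel $W$ on an \ops{} $\sfmuux$ representing $\Pi$ (available by \refT{T1}). Form the auxiliary \ops{} $\sss_t\=\sss\sqcup\set{*}$ by adjoining a single point $*$ that is incomparable to every element of $\sss$, equipped with the probability measure $\mu_t$ defined by $\mu_t(A)=(1-t)\mu(A)$ for measurable $A\subseteq\sss$ and $\mu_t\set{*}=t$. Extend $W$ to $W_t$ on $\sss_t\times\sss_t$ by setting $W_t=W$ on $\sss\times\sss$ and $W_t=0$ otherwise. The kernel conditions \eqref{w1}--\eqref{w2} are preserved: if $W_t(x,y)>0$ and $W_t(y,z)>0$, then necessarily $x,y,z\in\sss$ and $W_t(x,z)=W(x,z)=1$. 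Define $H(\Pi,t)\=\Pi_{W_t}$. At $t=0$ the adjoined point carries no mass, so $H(\Pi,0)=\Pi$; at $t=1$ the kernel $W_1$ vanishes $\mu_1\times\mu_1$-\aex, so $H(\Pi,1)=\pie$ by \refE{E0}.

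The crucial computation is that for every finite poset $Q\in\cP$, writing $Q_1\subseteq Q$ for the set of vertices that participate in at least one comparable pair, and $Q|_{Q_1}$ for the induced subposet,
\begin{equation*}
 t\bigpar{Q,H(\Pi,t)} = (1-t)^{|Q_1|}\,t(Q|_{Q_1},\Pi),
\end{equation*}
with the right-hand side interpreted as $1$ when $Q_1=\emptyset$ (in which case $Q=E_{|Q|}$ and the identity reduces to $1=1$). This follows by unfolding the integral formula of \refT{T1}\ref{T1b} applied to $W_t$: any vertex of $Q_1$ must be assigned a value in $\sss$ (otherwise some factor $W_t$ vanishes and kills the integrand), which contributes a factor $1-t$ per such vertex, while each isolated vertex does not appear in any factor and thus integrates freely to $\mu_t(\sss_t)=1$.

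Two observations complete the proof. First, the right-hand side of the displayed formula depends only on $\Pi$, so $H(\Pi,t)$ is well-defined independent of the choice of representing kernel $W$. Second, for each fixed $Q$ the map $(\Pi,t)\mapsto(1-t)^{|Q_1|}\,t(Q|_{Q_1},\Pi)$ is jointly continuous on $\cpoo\times\oi$, because $(1-t)^{|Q_1|}$ is polynomial in $t$ and $t(Q|_{Q_1},\cdot)$ is continuous on $\cpq$. By \refT{T1ox}, joint continuity of $t(Q,H(\Pi,t))$ in $(\Pi,t)$ for every $Q\in\cP$ is equivalent to continuity of $H:\cpoo\times\oi\to\cpoo$. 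The only step that needs genuine care is the displayed formula; it is elementary but requires separating the non-isolated and isolated vertices of $Q$ and handling the degenerate case $Q_1=\emptyset$ correctly.
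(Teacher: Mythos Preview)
Your proof is correct and follows essentially the same approach as the paper's: both adjoin an isolated point $*$ to the representing \ops, transfer mass to it, and compute $t(Q,\cdot)$ via \eqref{t1b} to see that the result depends only on $\Pi$ and the parameter and varies continuously. The only cosmetic differences are the direction of parametrization and that the paper writes the key formula as $t(Q,\Pi_{(p)})=p^{\np Q}t(Q,\Pi)$ rather than your $(1-t)^{|Q_1|}t(Q|_{Q_1},\Pi)$; since isolated vertices contribute trivially to the integral, $t(Q,\Pi)=t(Q|_{Q_1},\Pi)$ and the two formulas coincide (and the paper's version sidesteps the $Q_1=\emptyset$ edge case you had to flag).
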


\begin{proof}
  To be contractible means that there is a homotopy between the
  identity map $\cpoo\to\cpoo$ and a constant map, \ie, a continuous
  map $\Psi:\cpoo\times\oi\to\cpoo$ such that $\Psi(\Pi,0)=\Pi$ and
  $\Psi(\Pi,1)=\Pi'$ for all $\Pi$ and some fixed $\Pi'$ in $\cpoo$.
We construct such a map with $\Pi'=\pie$ as follows.

Given $\Pi\in\cpoo$, choose a representing kernel $W$ on an \ops{}
$\sfmuu$. Define $\sssx\=\sss\cup\set *$ (with $*\notin\sss$), extend
$<$ to $\sssx$ in any way (\eg, with $*$ incomparable to
every $x\in\sss$), and define, for $p\in\oi$, $\mup\set*=1-p$
and $\mup(A)=p\mu(A)$ for $A\subseteq\sss$; finally, extend $W$ to
$\sssx$ by $W(*,x)=W(x,*)=W(*,*)=0$ for $x\in\sss$.
Let $\pipp\in\cpoo$ be the poset limit defined be the extended kernel
$W$ on $(\sssx,\mup)$. 

For a poset $Q$, let 
\begin{equation*}
  \np{Q}\=|\set{x\in Q:x<y \text{ or } y<x \text{ for some $y\in Q$}}|,
\end{equation*}
the
number of elements of $Q$ that are comparable to at least one other
element.
Then, as a consequence of \refT{T1}\ref{T1b}, for every finite poset $Q$,
\begin{equation}
  \label{homo}
t(Q,\pipp)=p^{\np{Q}}t(Q,\Pi).
\end{equation}
In particular, this shows that $\pipp$ depends on $\Pi$ and $p$ only,
and not on the choice of the kernel $W$. Furthermore, $\Pi_{(1)}=\Pi$,
while, by \eqref{homo} and \eqref{e0}, $\Pi_{(0)}=\pie$ defined in
\refE{E0}, for every $\Pi\in\cpoo$. Moreover, \eqref{homo} shows that
the map $(\Pi,s)\mapsto\Pi_{(s)}$ is a continuous map
$\cpoo\times\oi\to\cpoo$. 
Consequently, $\Psi(\Pi,s)\=\Pi_{(1-s)}$ defines the desired homotopy.
\end{proof}

The poset limit $\pipp$ in the proof can be regarded as a thinning of
$\Pi$. The corresponding \erip{} $\P(\infty,\pipp)$ is obtained from
$\poopi$ by randomly selecting elements with probability $1-p$ each,
independently, and making them uncomparable to everything.

\begin{ack}
This work was stimulated by helpful discussions with 
Graham Brightwell and Malwina Luczak during the programme
``Combinatorics and Statistical Mechanics'' at the Isaac Newton
Institute, Cambridge, 2008, where SJ was supported by a Microsoft
fellowship.
Parts of this work was done at Institut Mittag-Leffler, Djursholm, 2009.
\end{ack}

\newcommand\AAP{\emph{Adv. Appl. Probab.} }
\newcommand\JAP{\emph{J. Appl. Probab.} }
\newcommand\JAMS{\emph{J. \AMS} }
\newcommand\MAMS{\emph{Memoirs \AMS} }
\newcommand\PAMS{\emph{Proc. \AMS} }
\newcommand\TAMS{\emph{Trans. \AMS} }
\newcommand\AnnMS{\emph{Ann. Math. Statist.} }
\newcommand\AnnPr{\emph{Ann. Probab.} }
\newcommand\CPC{\emph{Combin. Probab. Comput.} }
\newcommand\JMAA{\emph{J. Math. Anal. Appl.} }
\newcommand\RSA{\emph{Random Struct. Alg.} }
\newcommand\ZW{\emph{Z. Wahrsch. Verw. Gebiete} }
\newcommand\DMTCS{\jour{Discr. Math. Theor. Comput. Sci.} }

\newcommand\AMS{Amer. Math. Soc.}
\newcommand\Springer{Springer}
\newcommand\Wiley{Wiley}

\newcommand\vol{\textbf}
\newcommand\jour{\emph}
\newcommand\book{\emph}
\newcommand\inbook{\emph}
\def\no#1#2,{\unskip#2, no. #1,} 
\newcommand\toappear{\unskip, to appear}

\newcommand\webcite[1]{
\texttt{\def~{{\tiny$\sim$}}#1}\hfill\hfill}
\newcommand\webcitesvante{\webcite{http://www.math.uu.se/~svante/papers/}}
\newcommand\arxiv[1]{\webcite{arXiv:#1.}}

\def\nobibitem#1\par{}

\end{document}